\documentclass[11pt,a4paper]{article}
\usepackage{amsthm}
\usepackage{amssymb}
\usepackage{amsmath}
\usepackage{amsfonts}
\usepackage{mathrsfs}
\usepackage{enumerate}
\usepackage[top=72pt,bottom=96pt,left=72pt,right=72pt]{geometry}
\newtheorem{lemma}{Lemma}[section]
\newtheorem{remark}[lemma]{Remark}
\newtheorem{example}[lemma]{Example}
\newtheorem{theorem}[lemma]{Theorem}
\newtheorem{corollary}[lemma]{Corollary}
\newtheorem{definition}[lemma]{Definition}
\newtheorem{proposition}[lemma]{Proposition}
\renewenvironment{proof}[1][\unskip]{\par\vskip6pt plus4pt minus3pt%
 \noindent\textbf{Proof #1:\ }}{\hfill\ensuremath\Box}
\newcommand{\Ad}{\mathrm{Ad}}
\newcommand{\ad}{\mathrm{ad}}
\newcommand{\Aff}{\mathrm{Aff}}
\newcommand{\Aut}{\mathrm{Aut}}
\newcommand{\C}{\mathbb{C}}
\newcommand{\CH}{\mathbb{C}\mathrm{H}}
\newcommand{\CP}{\mathbb{C}\mathrm{P}}
\newcommand{\HP}{\mathbb{H}\mathrm{P}}
\renewcommand{\d}{\mathrm{d}}
\newcommand{\diag}{\mathrm{diag}}
\newcommand{\End}{\mathrm{End}}
\newcommand{\frakC}{\mathfrak{C}}
\newcommand{\frakg}{\mathfrak{g}}
\newcommand{\frakh}{\mathfrak{h}}
\newcommand{\frakk}{\mathfrak{k}}
\newcommand{\frakm}{\mathfrak{m}}

\newcommand{\frakp}{\mathfrak{p}}

\newcommand{\bbF}{\mathbb{F}}
\newcommand{\G}{\mathbf{G}}
\newcommand{\GL}{\mathbf{GL}}

\newcommand{\Gr}{\mathbf{Gr}}
\renewcommand{\H}{\mathbb{H}}

\newcommand{\id}{\mathrm{id}}
\renewcommand{\Im}{\mathrm{Im}}
\newcommand{\Isom}{\mathrm{Isom}}
\newcommand{\Jac}{\mathrm{Jac}}

\newcommand{\K}{\mathscr{R}}

\renewcommand{\L}{\Lambda}
\newcommand{\MM}{\mathbf{M}}
\newcommand{\N}{\mathbb{N}}
\newcommand{\NM}{\mathbf{N}}
\renewcommand{\O}{\mathbf{O}}

\newcommand{\Proj}{\mathbf{P}}
\newcommand{\QM}{\mathbf{Q}}
\newcommand{\R}{\mathbb{R}}
\newcommand{\rank}{\mathrm{rank}}
\renewcommand{\Re}{\mathrm{Re}}
\newcommand{\Ric}{\mathrm{ric}}

\newcommand{\rmH}{\mathrm{H}}
\newcommand{\rmK}{\mathrm{K}}

\renewcommand{\S}{\mathbb{S}}
\newcommand{\scal}{\mathbf{s}}
\newcommand{\scrT}{\mathscr{T}}
\newcommand{\SL}{\mathbf{SL}}

\newcommand{\SO}{\mathbf{SO}}
\newcommand{\so}{\mathfrak{so}}
\newcommand{\Sp}{\mathbf{Sp}}
\renewcommand{\sp}{\mathfrak{sp}}
\newcommand{\Spin}{\mathbf{Spin}}
\newcommand{\SU}{\mathbf{SU}}
\newcommand{\su}{\mathfrak{su}}
\newcommand{\Sym}{\mathrm{Sym}}
\renewcommand{\t}{\mathfrak{t}}

\newcommand{\Tr}{\mathrm{Tr}}
\newcommand{\T}{\mathbf{T}}
\newcommand{\trace}{\mathrm{trace}}
\renewcommand{\u}{\mathfrak{u}}
\newcommand{\U}{\mathbf{U}}
\newcommand{\VM}{\mathbf{V}}
\newcommand{\Z}{\mathbb{Z}}

\newcommand{\scrH}{\mathcal{H}}
\newcommand{\scrV}{\mathcal{V}}
\title{Jacobi Relations on Naturally Reductive Spaces}
\author{Tillmann Jentsch%
 \footnote{Lehrstuhl f\"ur Geometrie, Institut f\"ur Geometrie und Topologie,
 Fachbereich Mathematik, Universit\"at Stuttgart, Pfaffenwaldring 57,
 70569 Stuttgart, GERMANY;\ \texttt{tilljentsch@gmail.com}}
 \quad\&\quad Gregor Weingart%
 \footnote{Unidad Cuernavaca del Instituto de Matem\'aticas, Universidad
 Nacional Aut\'onoma de M\'exico, Avenida Universidad s/n, Lomas de Chamilpa,
 62210 Cuernavaca, Morelos, MEXICO;\ \texttt{gw@matcuer.unam.mx}}}
\begin{document}

\maketitle
 \begin{abstract}
  Naturally reductive spaces, in general, can be seen as an adequate
  generalization of Riemannian symmetric spaces.  Nevertheless, there are
  some that are closer to symmetric spaces than others. 
  On the one hand, there is the series of Hopf fibrations over complex space forms,
  including the Heisenberg groups with their metrics of type H. 
  On the other hand, there exist certain naturally reductive spaces in
  dimensions six and seven whose torsion forms have a distinguished algebraic property.
  All these spaces generalize geometric or algebraic properties of $3$--dimensional
  naturally reductive spaces and have the following point in common: 
  along every geodesic the Jacobi operator satisfies an ordinary differential
  equation with constant coefficients which can be chosen independently of
  the given geodesic.

 \begin{center}
  \parbox{300pt}{\textit{MSC 2010:}\ 53C21; 53C25; 53C30.}
  \\[5pt]
  \parbox{300pt}{\textit{Keywords:}\ 
    Naturally reductive spaces, Jacobi relations, generalized twistor equation, symmetric Killing tensors,
    vector cross products, positive sectional curvature, Berger spheres, Heisenberg groups, 
    nearly K\"ahler manifolds, nearly parallel $\G_2$--spaces}
 \end{center}

 \end{abstract}

\section{Introduction}
\label{se:intro}
 A complete Riemannian manifold $(\,M,\,g\,)$ 
 with Levi-Civita connection $\nabla$ and curvature tensor $R$ 
 is called a homogeneous space if some Lie group
 acts transitively on $(\,M,\,g\,)$ by isometries. The first intrinsic
 characterization of simply connected homogeneous
 Riemannian manifolds was given in~\cite{Sin}:

 \begin{theorem}\label{th:Singer}[I.~M.~Singer]
 \hfill\break
  A simply connected complete Riemannian manifold $(\,M,\,g\,)$ is 
  homogeneous if and only if the curvature jets 
  $(\,R|_p,\,\nabla R|_p,\,\nabla^2 R|_p,\,\ldots,
  \,\nabla^{k+1}R|_p\,)$ and $(\,R|_q,\,\nabla R|_q,\,\nabla^2R|_q,\,\ldots,
  \,\nabla^{k+1}R|_q\,)$ are linearly equivalent up to a certain order
  $k+1$ for every two points $p$ and $q$; the necessary order depends
  on the so--called Singer invariant $k\,:=\,k_{\textrm{Singer}}$ of $M$.
 \end{theorem}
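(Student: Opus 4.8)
The two implications are of very different difficulty, and I would dispatch the easy one first. If a Lie group $G$ acts transitively on $(\,M,\,g\,)$ by isometries, then for any $p,q\in M$ there is $\phi\in G$ with $\phi(p)=q$; as isometries preserve the Levi--Civita connection, hence $R$ and every $\nabla^r R$, the linear isometry $\d\phi_p\colon T_pM\to T_qM$ pulls back $\nabla^r R|_q$ to $\nabla^r R|_p$ for all $r$. Thus the curvature jets at $p$ and $q$ are linearly equivalent to every order, in particular up to order $k+1$; no assumption on $k$ is needed here.

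The substantive direction requires making the invariant precise. Fix $p$ and, for each $r\ge0$, let $\frakg_r\subseteq\so(T_pM)$ be the isotropy algebra of skew--symmetric endomorphisms whose induced derivation on the tensor algebra annihilates $R|_p,\nabla R|_p,\dots,\nabla^r R|_p$ simultaneously. These form a descending chain in the finite--dimensional $\so(T_pM)$, which therefore stabilises; let $k:=k_{\mathrm{Singer}}$ be the smallest index with $\frakg_k=\frakg_{k+1}$, and put $\frakh:=\frakg_k$. I would then translate the hypothesis into infinitesimal form. Choosing the jet--matching isometries $f$ smoothly along a curve through $p$ and differentiating, after trivialising by $\nabla$--parallel transport, shows that for each tangent vector $X$ there exists $S_X\in\so(T_pM)$ with $S_X\cdot\nabla^r R|_p=\nabla_X\nabla^r R|_p$ for all $r$; uniqueness of $S_X$ modulo $\frakh$ then lets me select it in the orthogonal complement $\frakh^\perp$. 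Solvability of this system at the single order $k+1$ is exactly the content of the hypothesis, while the kernel condition $\frakg_{k+1}=\frakg_k$ makes the selection well defined.

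The crux, and the step I expect to be the main obstacle, is to show that the tensor $S$ so obtained is a genuine Ambrose--Singer homogeneous structure, that is, that the canonical connection $\tilde\nabla:=\nabla-S$ satisfies $\tilde\nabla g=\tilde\nabla R=\tilde\nabla S=0$. Skew--symmetry of each $S_X$ gives $\tilde\nabla g=0$ for free, and $\tilde\nabla R=0$ amounts to the defining relations above; the delicate points are that the order--$(k+1)$ solution propagates to all orders and that $\tilde\nabla S=0$. Here I would differentiate the second Bianchi identity: it expresses $\nabla^{k+2}R|_p$ through the lower jets modulo an element of $\frakg_{k+1}=\frakg_k$, so that no new isotropy constraint appears beyond order $k$, and the relations defining $S_X$ at orders $\le k+1$ force those at all higher orders. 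The same differentiated Bianchi identities, together with the Ricci identity relating $\nabla\nabla^r R$ to the curvature action, are what reduce $\tilde\nabla S=0$ to an algebraic consequence of the construction. This is the computation on which the whole theorem rests.

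Finally I would pass from the infinitesimal to the global statement. Having produced a homogeneous structure $S$ on the complete, simply connected manifold $M$, the theorem of Ambrose and Singer integrates $\tilde\nabla$--parallel transport into a transitive group of global isometries: completeness guarantees that the local isometries generated by the $\tilde\nabla$--parallel frames are defined along all geodesics, and simple connectivity removes the monodromy obstruction to patching them together. Hence $(\,M,\,g\,)$ is homogeneous, which completes the converse and the proof.
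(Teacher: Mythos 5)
The paper does not prove this theorem: it is quoted from Singer's article \cite{Si}, and the text only remarks that \cite{NiTr} shows how to derive it from the Ambrose--Singer theorem. Your proposal can therefore only be measured against that standard route, which is indeed the one you follow: form the descending isotropy algebras $\frakg_r\subset\so(T_pM)$, extract a tensor $S$ so that $\tilde\nabla:=\nabla-S$ is an Ambrose--Singer connection, and integrate using completeness and simple connectivity. The easy direction and the overall architecture are fine.

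The genuine gap sits exactly in the step you yourself flag as the crux. You assert that the differentiated second Bianchi identity ``expresses $\nabla^{k+2}R|_p$ through the lower jets modulo an element of $\frakg_{k+1}=\frakg_k$'' and that this forces the defining relations for $S_X$ at all orders. That is not what the Bianchi identities do: they impose symmetry relations among the components of each fixed $\nabla^rR$ (cyclic sums vanish), but they do not determine $\nabla^{k+2}R$ from the jets of order at most $k+1$, even modulo the isotropy algebra. The actual propagation in Singer's argument is group--theoretic rather than differential: since the stabilizers $H_r\subset\O(\,n\,)$ of the jets $(\,R,\ldots,\nabla^rR\,)|_p$ have equal Lie algebras for all $r\geq k$, the natural projections between the $\O(\,n\,)$--orbits of the order--$(r+1)$ and order--$r$ jets are coverings, and a connectedness argument upgrades constancy of the order--$(k+1)$ jet class along $M$ to constancy of every higher jet class. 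Without this, or an equivalent substitute, you have no control over $\nabla_X\nabla^rR$ for $r>k+1$, hence neither $\tilde\nabla R=0$ beyond the hypothesised orders nor $\tilde\nabla S=0$; the latter additionally requires showing that the normalisation $S_X\in\frakh^\perp$ is compatible with covariant differentiation. You also pass over the smoothness of the pointwise--chosen $S$, which needs a constant--rank argument for the orbit map. As written, the central computation is named but not supplied, and the one mechanism you propose for it would not succeed.
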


 \noindent
 Here, $\nabla^kR$ denotes the $k$--fold iterated covariant derivative of the 
 curvature tensor. The Singer invariant $k_{\textrm{Singer}}\,<\,N$ is bounded
 from above by the maximal length $N\,\leq\,{n\choose 2}$ of a strictly 
 decreasing sequence of Lie subalgebras of $\so(\,n\,)$ where $n$ is the
 dimension of $M$. In particular, $(\,M,\,g\,)$  is homogeneous if and only if 
 the curvature jets of order ${n\choose 2}$ at two arbitrary points are conjugate by a linear isometry.

 \bigskip\noindent
 A seemingly different approach to the homogeneity of   
 Riemannian manifolds was given by Ambrose and Singer in~\cite{AS}. 
 It was shown that a simply connected complete  Riemannian manifold is 
 homogeneous if and only if it admits a (not necessarily
 unique) metric connection $\bar\nabla$ with parallel torsion and curvature
 tensor. Every such a connection $\bar\nabla$ is called an Ambrose--Singer
 connection, and the difference tensor $\bar\nabla\,-\,\nabla$ of
 type $(1,2)$ is called a homogeneous structure (cf. also~\cite{Bal,TV}).
 In~\cite{NTr} it was shown how to derive Theorem~\ref{th:Singer}
 from the Ambrose--Singer Theorem.

 \bigskip
 \noindent
 Maybe a flaw can be seen in the fact that the previously mentioned
 characterizations of homogeneity are not related to explicit equations on
 the Riemannian curvature tensor and its iterated covariant derivatives.
 The parallelism $\nabla R\,=\,0$ of the Riemannian curvature tensor 
 is an example of such an equation since it characterizes simply connected 
 Riemannian symmetric spaces among all simply connected complete 
 Riemannian manifolds. Comparable conditions which produce a broader 
 class of homogeneous spaces are seemingly not known. The most
 obvious generalization of the equation $\nabla R\,=\,0$, the vanishing of
 $\nabla^kR$ for some $k\,\geq\,2$, fails for the following reason:
 If $\nabla^kR\,=\,0$, then the curvature tensor has polynomial growth of
 order $k\,-\,1$ along each geodesic, whereas the norm of $R$ is globally
 bounded on a compact $M$. Therefore, it is immediately clear that on a
 compact Riemannian manifold $\nabla^kR\,=\,0$ for some $k\,\geq\,2$ implies
 that $\nabla R\,=\,0$. In~\cite{NO} it was shown that this problem remains,
 even if the compactness assumption on $M$ is omitted. If one's hope is to
 characterize homogeneous Riemannian manifolds by means of natural equations
 satisfied by the curvature tensor and its iterated higher covariant
 derivatives, then a better idea is clearly in need.

 \bigskip
 \noindent
 An interesting observation made during the last 10 years is the existence
 of linear Jacobi relations on certain non--symmetric homogeneous spaces. 
 Moreover, every such linear Jacobi relation of order $k$ implies that the
 component of $\nabla^{k+1} R$ in the summand with the highest possible weight 
vanishes in a tensor space decomposition with respect to the natural action
 of the Lie algebra $\so(\,n\,)$ of the orthogonal group. Therefore, the trace--free
 part of the symmetrized $k$--fold iterated covariant derivative $\nabla^k R$
 of the curvature tensor satisfies a generalized twistor equation, see
 Section~\ref{se:GT}.

 \bigskip
 \noindent
 Acknowledgments:\ \ \ We would like to thank A.~M.~Naveira for drawing our
 attention to the concept of Jacobi relations. Also we are grateful to R.~Storm 
 for explaining his results to us and to S.~Noshari for helpful comments on 
 an earlier version of this article.
\section{Jacobi Relations on Riemannian Manifolds}
\label{se:JR_RM}
\sloppy
 Let $(\,M,\,g\,)$ be a Riemannian manifold with Levi--Civita connection
 $\nabla$ and curvature tensor $R$. We denote by $\Sym^kT^*M$ and 
 $\End^{\mathrm{sym}}\,TM$ the vector bundles of symmetric $k$--tensors 
 and symmetric endomorphisms of $TM$, respectively. 
 Elements of  $\Sym^kT_p^*M$ are as usual seen as polynomial functions of 
 degree $k$ on $T_pM$. The section $\K_k$ of $\Sym^{k+2}T^*M\otimes 
 \End^{\mathrm{sym}}\,TM$ defined by
 \begin{equation}\label{eq:SCD}
  X\;\;\longmapsto\;\;\Big(\;U\;\longmapsto\;
  \K_k(\,X\,)U\;:=\;(\nabla^k_{X,\,\ldots,\,X}R)_{U,\,X}X\;\Big)
 \end{equation}
 is called the symmetrized $k$--th covariant derivative of $R$. Its significance
 is underlined by the well--known Jet Isomorphism Theorem: Accordingly,
 the Taylor expansion up to order $k\,+\,2$ of the metric tensor in  normal coordinates  
 at some point $p$ is a universal tensorial 
 expression in $(\,\K_0|_p,\,\K_1|_p,\,\K_2|_p,\, \cdots,\, \K_k|_p\,)$. 
 Hence, this sequence carries exactly the same information as the usual 
 $k$--jet $(\,R|_p,\,\nabla R|_p,\,\nabla^2 R|_p,\,\ldots, \,\nabla^kR|_p\,)$ 
 considered in Theorem \ref{th:Singer}. 

 \bigskip
 \noindent
 In terms of the higher covariant derivatives of $R$, the Gau\ss\ Lemma for 
 the exponential map reads $\K_k(X)X\,=\,(\,\nabla^k_{X,\,\cdots,\,X}R\,)_{X,\,X}X\,=\,0$,
 cf. \cite[Chpt. 3]{JW1}. In other words $\K_k$ is actually a section 
 of the $\Z$--graded vector bundle  $\Jac^\bullet TM$ defined by 
  the short exact sequence:
 $$
  0\;\longrightarrow\;
  \Jac^\bullet TM
  \;\stackrel\subset\;\longrightarrow\;
  \Sym^{\bullet+2} T^*M\otimes\End^{\mathrm{sym}}\,TM
  \;\stackrel{\mathrm{cl}}\;\longrightarrow\;
  \Sym^{\bullet+3}T^*M\otimes TM
 $$
 where the closure $\mathrm{cl}\,\K$ of a polynomial map $\K\,:\,T_pM\;\longrightarrow\; 
 \End^{\mathrm{sym}}\,T_pM\,,\, X\;\longmapsto \;\K(\,X\,)$ is given by $\mathrm{cl}\,\K\,:\, 
 T_pM\;\longrightarrow\;T_pM\,,\, X\;\longmapsto \;\K(\,X\,)\, X$. 
 Therefore, the elements of $\Jac^k T_pM$ have the index symmetries described by a Young
 diagram with two rows of lengths $k+2$ and $2$ respectively (cf.~\cite[Chpt. 6]{FulH}). 
 The relevance of this fact will only become clear in Section \ref{se:GT}. 

 \bigskip
 \noindent
 The graded vector bundle $\Jac^\bullet TM$ is evidently a
 graded module over the graded algebra bundle $\Sym^\bullet T^*M$
 of polynomials on $TM$, in turn the graded vector space
 $\Gamma(\,\Jac^\bullet TM\,)$ is a graded module over
 $\Gamma(\,\Sym^\bullet T^*M\,)$.

 \begin{definition}[Jacobi Relations on Riemannian Manifolds]
 \hfill\label{de:JR}\break
  A Jacobi relation of order $d\,\in\,\N$ on a Riemannian manifold $(M,\,g)$
  is a linear relation in the module $\Gamma(\,\Jac^{d+1}TM\,)$ with homogeneous 
  coefficient polynomials $a_1,\,\ldots,\,a_{d+1}$ in the sense 
  $a_k\,\in\,\Gamma(\,\Sym^kT^*M\,)$:
  \begin{equation}\label{eq:JR}
   \K_{d+1}
   \;\;=\;\;
   -\,a_1\,\K_d\;-\;a_2\,\K_{d-1}\;-\;\cdots\;-\;a_{d+1}\,\K_0
  \end{equation}
  where $\K_k$ is the symmetrized $k$--th covariant derivative 
  of $R$ defined in \eqref{eq:SCD}.
 \end{definition}

 \noindent
 For every geodesic $\gamma$, consider the Jacobi operator $\K_0(\,\dot\gamma\,):\,
 U\longmapsto R_{U,\,\dot\gamma}\dot\gamma$ along $\gamma$ as well as its
 iterated covariant derivatives $\K_k(\,\dot\gamma\,)\,=\,\frac{\nabla^k}{dt^k}
 \K_0(\dot\gamma)$ for all $k\,\in\,\N_0$. A Jacobi relation of the form
 (\ref{eq:JR}) exists if and only if for every geodesic 
 $\gamma$ in $M$ it holds true that:
 \begin{equation}\label{eq:JR_original}
  \K_{d+1}(\,\dot\gamma\,)
  \;\;=\;\;
  -\, a_1(\,\dot\gamma\,)\, \K_d(\,\dot\gamma\,)
  \;-\;a_2(\,\dot\gamma\,)\,\K_{d-1}(\,\dot\gamma\,)
  \;-\;\cdots\;-\;a_{d+1}(\,\dot\gamma\,)\,\K_0(\,\dot\gamma\,)
 \end{equation}

 \begin{remark}\label{re:JR}
  Let $(\,M,\,g\,)$ be an arbitrary homogeneous Riemannian space 
  and let $G\,:=\,\Isom(\,M,\,g\,)$. 
  Then, there exists a Jacobi relation with $G$--invariant coefficients for the following 
  reason: Since $\Sym^{\bullet+2} T^*_pM\otimes\End^{\mathrm{sym}}\,T_pM$ is a 
  finitely generated module over $\Sym^\bullet T^*_pM$\,, it is a Noetherian 
  $\Sym^\bullet  T^*_pM$--module by Hilbert's Basis Theorem. This property 
  guaranties that the ascending chain of $\Jac^\bullet T_pM$--submodules 
  \begin{equation}\label{eq:chain}
   \{\K_0|_p\}_{\Sym^\bullet T^*_pM}
   \;\subset\;
   \{\K_0|_p\;,\;\K_1|_p\}_{\Sym^\bullet T^*_pM}
   \;\subset\;
   \{\K_0|_p\;,\;\K_1|_p\;,\;\K_2|_p\}_{\Sym^\bullet T^*_pM}
   \;\subset\;
   \cdots
  \end{equation}
  becomes stationary,  i.e., there exists $k\,\in\,\N$ such that
  $$
   \{\K_0|_p\;,\;\K_1|_p\;,\;\cdots\;,\;\K_k|_p\}_{\Sym^\bullet T^*_pM}
   \;\;=\;\;
   \{\K_0|_p\;,\;\K_1|_p\;,\;\cdots\;,\;\K_{k+1}|_p\}_{\Sym^\bullet T^*_pM}
  $$
  Hence, there exist coefficients $a_0\,,\,\ldots,\,a_k\,\in\,
  \Sym^\bullet T^*_pM$ such that 
  $$
   \K_{k+1}|_p
   \;\;=\;\;
   \sum_{\ell=0}^k a_\ell \;\K_\ell|_p
  $$ 
   Because the isotropy subgroup $H\subset G$ at $p$ is compact,
   the Haar measure of $H$ is a finite bi--invariant Borel measure.
   In particular we can assume that $|H|\,=\,1$. Integrating the previous equation
   with respect to the canonical left action $\star$ of $H$ on tensors gives
  $$
  \int_H h\star \K_{k+1}|_p\;\d\,h
   \;\;=\;\;
    \int_H h\star \sum_{\ell=0}^k a_\ell \;\K_\ell|_p \;\d\,h 
  $$
  From the linearity of the integral and the $H$--invariance
  $h\star \K_\ell\,=\,\K_\ell$ for all $h\in H$, we see that
  $$
   \K_{k+1}|_p\;=\; \sum_{\ell=0}^k \tilde a_\ell \;\K_\ell|_p
  $$
  where $\tilde a_\ell(\,x\,)\,:=\, \int_H a_j(\,h\, x\,)\d\,h$ is
  $H$--invariant. Therefore, we can assume without loss of generality that
  $a_\ell$ shares this property. Since there is a one--one correspondence between
  $H$--invariant tensors at $p$ and $G$--invariant tensors on $M$, we
  obtain a Jacobi relation with $G$--invariant coefficients. 
 \end{remark} 
  
 \noindent
 Let $G$ be a connected Lie group and $H\,\subset\, G$ be a closed subgroup. 
 The pair $(\,G,\,H\,)$ is called an effective homogeneous pair if $G$ acts effectively from the left on the orbit space $
 M\,:=\,G/H$,  i.e., $G$ is a subgroup of the diffeomorphism group of $M$.
 Although one can assume without loss of generality that a homogeneous pair is effective (simply by passing 
 to the images of $G$ and $H$ in the diffeomorphism group of $G/H$), 
 it is often more convenient to have presentations $\tilde G/\tilde H$ where the kernel of the left action 
 of $\tilde G$ on $\tilde G/\tilde H$ is discrete. Then, $(\,\tilde G,\,\tilde H\,)$ is called almost effective. 
 A well--known example is the almost effective presentation $\SU(\,3\,)/\U(\,2\,)$ 
 of the complex projective space $\CP^2$: The center of $\SU(\,3\,)$ is a cyclic group of order 
 three given by the diagonal matrices $\diag(\,e^{\frac{2}{3}k\,\pi\,i\,},\,e^{\frac{2}{3}k\,\pi\,i\,},
 \,e^{\frac{2}{3}k\,\pi\,i\,}\,)$ with $k\,=\,0,\,1,\,2$. 
 This group is contained in $\U(\,2\,)$ and hence acts trivially on $\SU(\,3\,)/\U(\,2\,)$.
 
 \bigskip
 \noindent
 Suppose next that there exists a non--degenerate $\Ad(G)$--invariant bilinear form 
 $B$ on $\frakg$ inducing a $B$-orthogonal direct splitting $\frakg\,=\,\frakh\oplus\frakm$
 such that $B|_{\frakm\times\frakm}$ is positive definite, where $\frakh$ and $\frakg$ denote the 
 Lie algebras of $H$ and $G$, respectively. The splitting is
 automatically reductive, i.e., $\Ad_H(\,\frakm\,)\subset \frakm$, and
 $(\,G,\,H,\,B\,)$ is called a naturally reductive triple. We will call this triple (almost) effective, 
 if $(\,G,\,H\,)$ is (almost) effective. 

 \begin{definition}\label{de:NRS}
 Let $(\,M,\,g\,)$ be a complete Riemannian manifold.
 \begin{enumerate}
  \item 
  A reductive structure is a metric affine connection $\bar\nabla$ such that the $\bar\nabla$--parallel translation 
  $$
   \big(\,\parallel_0^1 \gamma\,\big )^{\bar\nabla}\;:\;T_{\gamma(0)}M\,\longrightarrow\,T_{\gamma(1)}M
  $$  
  along an arbitrary curve $\gamma\colon [\,0,\,1\,]\,\to\, M$ can be realized by an element $a$ 
  of the group $\Aff(\,M,\, \bar\nabla\,)$ of affine transformations: 
  $$
   a(\,\gamma(0)\,)\;=\;\gamma(1)\ \text{and}\ \big(\,\parallel_0^1 \gamma\,\big )^{\bar\nabla}\,=\,\d_{\gamma(0)}a
  $$
  Then, $a$ is uniquely defined by $\gamma$ and an isometry of $(\,M,\,g\,)$. The connected subgroup of
  $\Isom(\,M,\,g\,)\,\cap\,\Aff(\,M,\, \bar\nabla\,)$ 
  generated by the $a$'s is called the transvection group and will be denoted by $\Tr(\,\bar\nabla\,)$.
  \item
  A reductive structure $\bar\nabla$ is called naturally reductive if the $3$--tensor $\tau$ defined by the equation 
  $\bar\nabla\,=\, \nabla\,+\,\frac{\tau}{2}$ is an alternating $3$--form. Then, $\tau$ is called the torsion form.
  \end{enumerate}
 The triple $(\,M,\,g,\,\bar\nabla\,)$ is also called a (naturally) reductive space.
 \end{definition}

 \noindent
 Clearly, every reductive structure of $(\,M,\,g\,)$ is an Ambrose--Singer connection as considered in Section \ref{se:intro}.
 Conversely, if $M$ is simply connected,  then every  Ambrose--Singer connection is a reductive structure, cf. \cite{AS}.
 Given an effective naturally reductive triple $(\,G,\,H,\,B\,)$ we consider the Riemannian manifold
 $(\,M\,:=\,G/H,\,g\,:=\,B|_{\frakm\times\frakm}\,)$ and the canonical connection $\bar\nabla$ 
 associated with $(\,G,\,H,\,B\,)$. It is well known that this establishes a 1--1 correspondence 
 between effective naturally reductive triples $(\,G,\,H,\,B\,)$ and naturally reductive spaces
 $(\,M,\,g\,,\,\bar\nabla\,)$ such that $G\,\cong\,\Tr(\,\bar\nabla\,)$, see \cite[Theorems 1 and 2]{St1}. 
 Accordingly, a naturally reductive space is called normal if $B$ is positive definite,
 and standard normal if the Killing form $\rmK_\frakg$ of the Lie algebra of $G$ is negative definite 
 and there exists $c\,>\,0$ such that $B\,=\,-\,c^2\,\rmK_\frakg$. 
 Simply connected naturally reductive spaces were recently classified up to dimension eight, cf.~\cite{AFF,St3}.

 \bigskip
 \noindent
 The following theorem shows that a naturally reductive
 structure is often already uniquely determined by the underlying
 Riemannian structure:

 \begin{theorem}[\cite{OR1}]\label{th:OR}
  Let  $(M,\,g)$ be a simply connected irreducible compact Riemannian manifold
  which is not isometric neither to a euclidean sphere nor to a simple compact Lie
  group with its bi--invariant metric. Then, there exists at most one 
  naturally reductive structure $\bar\nabla$. In the affirmative case, the isometry group
  $\Isom(\,M,\,g\,)$ is contained in the group $\Aff(\,M,\, \bar\nabla\,)$ of affine transformations.
  Hence, the connected components of these groups coincide 
  $\Isom(\,M,\,g\,)_\circ\,=\,\Aff(\,M,\, \bar\nabla\,)_\circ$. 
  Therefore, the transvection group $\Tr(\,\bar\nabla\,)$ 
  is necessarily a normal subgroup of $\,\Isom(\,M,\,g\,)$. 
  If, moreover, $(M,\,g)$ is non--symmetric, then  the holonomy group of $(M,\,g)$ is generic.
 \end{theorem}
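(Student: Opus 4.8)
The engine of the argument is the \emph{skew--torsion holonomy theorem}: if $V$ is a Euclidean vector space, $\frakk\subseteq\so(V)$ a connected Lie subalgebra acting irreducibly, and $\Theta\neq 0$ a $3$--form on $V$ whose contractions $\Theta_v:=\Theta(v,\cdot,\cdot)$ all lie in $\frakk$, then $\frakk$ is transitive on the unit sphere and either $\frakk=\so(V)$, or $V$ carries the structure of a compact simple Lie algebra $\fraks$ with $\Ad$--invariant inner product, $\frakk=\ad(\fraks)$, and $\Theta$ is a multiple of the Cartan form $(X,Y,Z)\mapsto g([X,Y],Z)$. I would either invoke this or reprove it; its own proof rests on Simons' theorem on holonomy systems (an irreducibly acting holonomy algebra that is not transitive on the sphere is of symmetric type), applied to the observation that the span of the $\Theta_v$ is an ideal of $\frakk$ preserved by the whole group, which forces the transitive dichotomy above.

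To bring this to bear, fix $p$ and write $\tau$ for the torsion form of a candidate canonical connection $\bar\nabla=\nabla+\tfrac12\tau$; since $(M,g,\bar\nabla)$ is naturally reductive, $\tau$ is $\bar\nabla$--parallel. The geometric input I would establish is that the contractions $\tau_X\in\so(T_pM)$ lie in the Riemannian holonomy algebra $\hol_p$ (this is transparent in the model cases: on a compact Lie group with bi--invariant metric $\hol_p=\ad(\frakg)$ and $\tau_X=\ad(X)$). Because $(M,g)$ is irreducible, compact and simply connected, $\hol_p$ acts irreducibly, so $(T_pM,\hol_p,\tau)$ is a skew--torsion holonomy system to which the theorem applies. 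Its two alternatives are matched precisely by the two geometries excluded in the hypothesis: the alternative $(\fraks,\ad(\fraks))$ with the Cartan form is the compact Lie group with a bi--invariant metric, while the alternative $\frakk=\so(T_pM)$ yields a positive--dimensional family of compatible naturally reductive structures only for the round sphere (via its Hopf presentations). Ruling both out leaves no freedom for a second compatible parallel skew torsion, which I would turn into the statement that $\tau$ is uniquely determined by $(M,g)$, hence that $\bar\nabla$ is unique.

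With uniqueness established the remaining assertions are formal. For $\phi\in\Isom(M,g)$ the pulled--back connection $\phi^*\bar\nabla$ is again metric, with $\phi^*\bar\nabla$--parallel skew torsion $\phi^*\tau$, so $(M,g,\phi^*\bar\nabla)$ is naturally reductive and uniqueness forces $\phi^*\bar\nabla=\bar\nabla$; thus $\Isom(M,g)\subseteq\Aff(M,\bar\nabla)$. Conversely any $\psi\in\Aff(M,\bar\nabla)_\circ$ sends the $\bar\nabla$--parallel metric $g$ to $\psi^*g=\lambda(\psi)\,g$ for a continuous homomorphism $\lambda$ into $\R_{>0}$ (irreducibility makes parallel symmetric $2$--tensors multiples of $g$); compactness of $M$ fixes the total volume and forces $\lambda\equiv 1$, so $\psi$ is an isometry and $\Isom(M,g)_\circ=\Aff(M,\bar\nabla)_\circ$. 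Since the transvection group $G$ is intrinsically attached to $\bar\nabla$ and every isometry preserves $\bar\nabla$, conjugation by an isometry $\phi$ sends $G$ to the transvection group of $\phi^*\bar\nabla=\bar\nabla$, i.e.\ $G\trianglelefteq\Isom(M,g)$. Finally, if $(M,g)$ is non--symmetric then Simons' alternative excludes the symmetric (non--transitive) case in the dichotomy, and the Lie group alternative is excluded by hypothesis, so $\hol_p=\so(T_pM)$: the Riemannian holonomy is generic.

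The main obstacle is the algebraic classification in the first step---controlling how a nonzero skew $3$--form can sit inside an irreducibly acting holonomy algebra and proving that its contractions span an invariant ideal, so that Simons' theorem yields the clean transitive dichotomy. A secondary but genuine difficulty is the rigidity deduction of the second paragraph: passing from ``there is essentially one compatible $3$--form away from the sphere and Lie group exceptions'' to the honest uniqueness of the parallel torsion $\tau$, which also requires the de~Rham bookkeeping needed to isolate a possible flat or reducible part before the irreducible theorem can be applied.
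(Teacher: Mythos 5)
Your overall architecture is the same as the paper's: everything is made to rest on the strong Skew--Torsion Holonomy Theorem of Olmos and Reggiani, and the paper's own proof consists entirely of citations to \cite{OR1} for exactly the steps you sketch (uniqueness of $\bar\nabla$, the inclusion $\Isom(M,g)\subset\Aff(M,\bar\nabla)$, the reverse inclusion on identity components, normality of the transvection group, genericity of the holonomy). Two of your steps, however, contain genuine problems. First, your argument for $\Aff(M,\bar\nabla)_\circ\subset\Isom(M,g)$ is incorrect as written: for $\psi\in\Aff(M,\bar\nabla)$ the pull--back $\psi^*g$ is parallel with respect to $\bar\nabla$, not with respect to the Levi--Civita connection, and it is the Riemannian holonomy --- not the holonomy of $\bar\nabla$ --- that acts irreducibly here. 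The holonomy of the canonical connection is typically reducible (for the Berger sphere $\SU(n+1)/\SU(n)$ it is $\SU(n)$ acting on $\C^n\oplus\R$), so the space of $\bar\nabla$--parallel symmetric $2$--tensors is in general larger than $\R\,g$, and your Schur--type argument only rules out a conformal factor, not a non--scalar $\bar\nabla$--parallel deformation of $g$. This step genuinely requires the separate compactness argument of \cite[Remark~6.3]{OR1} and cannot be dispatched as ``formal''.

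Second, your reduction of the Skew--Torsion Holonomy Theorem to Simons' theorem plus the observation that the span of the $\Theta_v$ is an invariant ideal only yields the dichotomy ``transitive on the sphere or symmetric''; what is actually needed (and is the genuinely hard part of \cite{OR1}) is the strong form asserting that in the transitive case the algebra must be all of $\so(V)$, which requires eliminating every other group acting transitively on a sphere ($\U(n)$, $\SU(n)$, $\Sp(n)\cdot\Sp(1)$, $\Spin(7)$, $\Spin(9)$, $\G_2$) as a possible carrier of a nonzero $3$--form with all contractions in the algebra. You do flag this as the main obstacle, which is fair, but the same caveat applies to your unproved claim that the contractions $\tau_X$ lie in the Riemannian holonomy algebra $\hol_p$: this is itself one of the substantive lemmas of \cite{OR1} rather than a transparent fact, and without it the skew--torsion holonomy system you want to feed into the theorem is not even set up.
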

 
 \begin{proof}
  The strong Skew--torsion Holonomy Theorem~\cite[Theorem~1.4]{OR1} implies 
  that $\bar\nabla$ is uniquely determined by $g$, cf.~\cite[Theorem~1.2]{OR1}. 
  In particular, we have $\Isom(\,M,\,g\,)\, \subset\, \Aff(\,M,\,\bar\nabla\,)$ 
  by the uniqueness of $\bar\nabla$, cf.~\cite[Theorem~1.1~(ii)]{OR1}. 
  Conversely, one has $\Aff(\,M,\, \bar\nabla\,)_\circ\,
  \subset\,\Isom(\,M,\,g\,)$ on a compact naturally reductive space,
  see \cite[Remark 6.3]{OR1}. Thus, $\Isom(\,M,\,g\,)_\circ\,
  =\,\Aff(\,M,\,\bar\nabla\,)_0$. Also it is known that $G$ is always a normal 
  subgroup of $\Aff(\,M,\, \bar\nabla\,)$, cf. \cite[Chpt. 7]{OR1}. Moreover, as a further consequence of 
  the strong Skew--torsion Holonomy Theorem, the Riemannian holonomy group of a 
  non--symmetric simply connected irreducible compact naturally 
  reductive space is genericm, cf. \cite[Remark 6.6]{OR1}.
 \end{proof}

\bigskip
\noindent
 Let $(\,M^n,\,g,\,\bar\nabla\,)$ be a naturally reductive space with torsion
 form $\tau$. For each $X\,\in\,T\,:=\,T_pM$, let  $\tau_X$ denote the skew--symmetric endomorphism
 of $T$ characterized by $\langle\, \tau_X(\,Y\,),\,Z\,\rangle \,:=\,\tau(\,X,\,Y,\,Z\,)$ and consider the linear operator 
 $\scrT(\,X\,)\,:\,\End^{\mathrm{sym}}\,T\,\longrightarrow\, \End^{\mathrm{sym}}\,T$ with:
 \begin{equation}
 \label{eq:def_of_derivation}
 \scrT(\,X\,)\, \K
 \;\;:=\;\;
 U\;\longmapsto\;
 \,\frac12\,\Big(\;\K\,\tau_X\,U \,-\,\tau_X\,\K\,U\;\Big)
 \end{equation}
 By definition, $-2\,\scrT(\,X\,)$ is the usual action $\,\tau_X\star \K\,:=\,[\,\tau_X,\,\K\,]$ of 
 $\tau_X$ on $\K\,\in\,\End^{\mathrm{sym}}\,T$ via the commutator bracket. 
 Let $\{X\}^\perp$ denote the orthogonal complement of $\{X\}$ in $T$ and
 note that $\tau_X\,\in\End^{\mathrm{skew}}\,\{X\}^\perp$ 
 due to $\tau_X\,X\,=\,0$. Therefore, we have the induced map $\scrT(\,X\,)|_{\End^{\mathrm{sym}}\,\{X\}^\perp\,}\,:
 \,\End^{\mathrm{sym}}\,\{X\}^\perp\,\longrightarrow\, \End^{\mathrm{sym}}\,\{X\}^\perp$.  
 Then, $P_{\tau}(\,\lambda;X\,)\,:=\,\det\big(\lambda\,\id\,-\,\scrT(\,X\,)|_{\End^{\mathrm{sym}}\,\{X\}^\perp} \big)$
 is the characteristic polynomial of $\scrT(\,X\,)|_{\End^{\mathrm{sym}}\,\{X\}^\perp}$. 
 Note that $\dim\,\End^{\mathrm{sym}}\,\{X\}^\perp\,=\,{n \choose 2}\,=:\,d+1$ if $X\,\neq\,0$. Also
 let $\vartheta \,:=\,2\, \lceil \frac{d}{2}\rceil$; this is the maximal even number less than or equal to $d+1$. 
We will prove the following theorem:

 \begin{theorem}
 \label{th:JR}
  Let $(\,M,\,g,\,\bar\nabla\,)$ be an $n$--dimensional naturally reductive space with torsion form $\tau$.
  There exists a Jacobi relation of order $d\,=\,{n\,\choose 2}\,-\,1$ whose coefficients $a_\ell$ 
  are $\Tr(\,\bar\nabla\,)$--invariant symmetric  Killing tensors of degree $\ell$ which vanish for $\ell$ odd and satisfy
  $P_\tau(\,\lambda;\,X\,)\,=\,\lambda^{d\,+\,1}\,+\, \sum_{k\,=\,1}^{\frac{\vartheta}{2}} a_{2\,k}(\,X\,)\,\lambda^{d\,+\,1\,-\,2\,k}$
  for each $0\,\neq\,X\,\in\,T_pM$.
 \end{theorem}

 \noindent
 We recall that a symmetric tensor field $a\,\in\,\Gamma(\,\Sym^k\,T^*M\,)$
 is called a Killing tensor provided it satisfies the partial differential equation 
 $(\nabla_Xa)(\,X\,)\,=\,0$ for all $X\,\in\,T_pM$ \cite{HMS} (where we see $a_p$ as usual as a 
 polynomial map of degree $k$ on $T_pM$). Killing tensors are as well 
 characterized by the property that for every geodesic $\gamma(\,t\,)$ the function 
 $a(t) := a(\dot\gamma(\,t\,))$ is constant. Also a Riemannian manifold $(\,M,\,g\,)$ 
 is called a  \textsc{g.o.}--space  if every geodesic $\gamma(\,t\,)$ is the orbit
 of the $1$--parameter group of isometries generated  by some Killing vector field,
 cf. \cite[Chpt. 2.2]{BTV}. Clearly, the most prominent examples of \textsc{g.o.}--spaces 
 are naturally reductive spaces. It is straightforward that \textsc{g.o.}--spaces are homogeneous Riemannian 
 manifolds, and that every $\Isom(\,M,\,g\,)_\circ$--invariant symmetric tensor field is Killing.  
 Therefore, it follows from Remark \ref{re:JR} that also \textsc{g.o.}--spaces satisfy a Jacobi relation 
 whose coefficients are  Killing tensors, i.e., the coefficients $a_k(\dot \gamma(\,t\,))$ of Equation
 \eqref{eq:JR_original} are constant in the geodesic parameter. However, in general
 these coefficients, and hence the differential equation for $\K_\gamma$, 
 will depend on the chosen unit speed geodesic. We will now specify the case where this does not happen.

 \subsection{Linear Jacobi Relations}
 \label{se:LJR_RM}
 More precisely we consider Jacobi relations on Riemannian manifolds 
 where the polynomial coefficients are multiples of symmetric powers of 
 the metric tensor:

 \begin{definition}[Linear Jacobi Relations]
 \hfill\label{de:LJR}\break
  A linear Jacobi relation of order $d\,\in\,\N$ on a Riemannian manifold
  $(\,M,\,g\,)$ is a Jacobi relation in the sense of Definition \ref{de:JR}
  in which all coefficients are multiples of the metric tensor $g$:
  \begin{equation}\label{eq:LJR}
   \K_{d+1}
   \;\;=\;\;
   -\,a_2\,\|\,\cdot\,\|^2\,\K_{d-1}
   \;-\;a_4\,\|\,\cdot\,\|^4\,\K_{d-3}
   \;-\;\cdots
   \;-\;a_{\vartheta}\,\|\,\cdot\,\|^\vartheta\,\K_{d+1-\vartheta}
  \end{equation}
  with real coefficients $a_2,\,a_4,\,\ldots\,,a_\vartheta\,\in\,\R$ where $\vartheta \,:=\,2\, \lceil \frac{d}{2}\rceil$. Equivalently,
  \begin{equation}
  \label{eq:LJR2}
   \K_{d+1}(\,\dot\gamma\,)
   \;\;=\;\;
   -\,a_2\,\|\,\dot\gamma\,\|^2\ \K_{d-1}(\,\dot\gamma\,)
    -\,a_4\,\|\,\dot\gamma\,\|^4\ \K_{d-3}(\,\dot\gamma\,)
   \;-\;\cdots
   \;-\;a_\vartheta\,\|\,\dot\gamma\,\|^\vartheta\,\K_{d+1-\vartheta}
   (\,\dot\gamma\,)
  \end{equation}
  along every geodesic $\gamma$ of $(\,M,\,g\,)$. 
 \end{definition}
 \noindent
 If there exists a linear Jacobi relation, then it is obvious that
 there exists a unique minimal linear Jacobi relation, i.e., one whose order $d$ is minimal.
 For example, a symmetric space satisfies $\nabla R\,=\,0$; thus, $\K_1\,=\,0$
 is a linear Jacobi relation, which is minimal unless $R\,=\,0$.
 
\bigskip\noindent
 In order to understand first basic properties of linear Jacobi relations,
 note that \eqref{eq:LJR2} becomes an ordinary differential equation
 with constant coefficients $P(\,\frac d{dt}\,)\K_\gamma^{ij}(\,t\,)\,=\,0$ for the matrix
 coefficients $\K_\gamma^{ij}$ with respect to a $\nabla$--parallel
 frame along $\gamma$ where $P$ is the polynomial 
 \begin{equation}\label{eq:p}
  P(\,\lambda\,)
  \;\;:=\;\;
  \lambda^{d+1}\;+\;a_2\,\lambda^{d-1}\;+\;a_4\,\lambda^{d-3}\;+\;\cdots\;+\;
  \begin{cases}
   a_d\lambda & \textrm{if $d$ is even}
   \\
   a_{d+1} & \textrm{if $d$ is odd}
  \end{cases}
 \end{equation}

 \begin{proposition}\label{p:LJR_STRUK}
   Let $(\, M,\,g\,)$ be a compact Riemannian manifold or a naturally
   reductive space. Suppose there exists a linear Jacobi relation \eqref{eq:LJR}
   and let $P(\,\lambda\,)$ be the polynomial corresponding to the minimal
   linear Jacobi relation.
  \begin{enumerate}
   \item Each root of $P$ is purely
         imaginary and simple. Therefore, there exist strictly positive
         numbers $0 \,< \, \lambda_1 \, < \, \cdots \, < \,\lambda_{\frac{\vartheta}{2}}$
         such that $P(\,\lambda\,)\,=\,Q(\,\lambda\,)$ or $P(\,\lambda\,)\,=\,\lambda\,
         Q(\,\lambda\,)$ where $Q(\,\lambda\,)\,:=\,\prod_{\ell=1}^{\frac{\vartheta}{2}} (\lambda^2 + \lambda_\ell^2)$.
  \item If the Ricci tensor $\Ric(\,X\,)\,:=\, \trace\,\K_0(\,X\,)$ is a non--vanishing Killing tensor, then
         $P(\,\lambda\,)\,=\,\lambda\,Q(\,\lambda\,)$. In particular, then $d$ is even.
  \end{enumerate}
  \end{proposition}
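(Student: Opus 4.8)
The plan is to combine a boundedness estimate for the Jacobi operator with the minimality of $P$, reading off the structure of the roots from the constant--coefficient ODE $P(\frac{d}{dt})r_{ij}=0$ satisfied by the entries of $\K_\gamma$.

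First I would fix a unit--speed geodesic $\gamma$ — which is defined on all of $\R$, since both a compact and a naturally reductive space are complete — together with a $\nabla$--parallel frame along it, so that the entries $r_{ij}(t)$ of $\K_\gamma(t):=\K_0(\dot\gamma(t))$ satisfy $P(\frac{d}{dt})r_{ij}=0$ and each derivative $r_{ij}^{(k)}(t)$ is an entry of $\K_k(\dot\gamma(t))$. In both situations $\|\nabla^kR\|$ is bounded: it is constant on a homogeneous, hence naturally reductive, space, and continuous on a compact one. Consequently every $r_{ij}$, and every one of its derivatives, is a bounded function on $\R$.

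Since $r_{ij}$ solves a constant--coefficient linear ODE, it is a finite sum $\sum_\mu p_\mu(t)\,e^{\mu t}$ over the roots $\mu$ of $P$, with $\deg p_\mu$ below the multiplicity of $\mu$. Here I would invoke the standard fact that boundedness on all of $\R$ forces every mode actually occurring to have $\Re\mu=0$ and $p_\mu$ constant; thus each $r_{ij}$ is a trigonometric polynomial whose frequencies are simple purely imaginary roots of $P$, possibly together with a constant term coming from the root $0$. The delicate point is that this alone does \emph{not} exclude $P$ from having a further root that is non--imaginary or repeated: such a root simply fails to appear in the particular solution $r_{ij}$. This is precisely where minimality enters. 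Let $\Lambda$ be the set of all frequencies $\pm i\lambda_\ell$ — together with $0$ if it occurs — that show up for some geodesic and some entry, and put $\tilde P(\lambda):=\lambda^\epsilon\prod_\ell(\lambda^2+\lambda_\ell^2)$, where $\epsilon\in\{0,1\}$ records whether $0\in\Lambda$. By construction $\tilde P(\frac{d}{dt})$ annihilates every $r_{ij}$ for every geodesic, and $\tilde P$ has the admissible even-or-$\lambda$--times--even shape of \eqref{eq:p}, so by the equivalence of \eqref{eq:LJR} and \eqref{eq:LJR2} it defines a linear Jacobi relation. Minimality gives $\deg\tilde P\ge\deg P$. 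Conversely, the roots of the squarefree $\tilde P$ are all roots of $P$ — each occurring frequency $\pm i\lambda_\ell$ is a root of $P$, and if $0\in\Lambda$ then applying $P(\frac{d}{dt})$ to a nonzero constant mode forces $P(0)=0$ — whence $\deg\tilde P\le\#\{\text{distinct roots of }P\}\le\deg P$. Therefore $P=\tilde P$: all roots are purely imaginary and simple, $P$ equals $Q$ or $\lambda Q$ according to its parity, and the coefficients $a_{2j}$ are the elementary symmetric functions of the positive numbers $\lambda_\ell^2$, hence strictly positive. I expect this reconstruction of a smaller admissible relation $\tilde P$ to be the main obstacle, as it is the only step that genuinely rules out bad roots.

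For the second statement I would use that the trace of the Jacobi operator is the Ricci quadratic form, $\trace\K_0(\dot\gamma)=\Ric(\dot\gamma,\dot\gamma)$. If $\Ric$ is a Killing tensor, this trace is constant along every geodesic, and since $\Ric$ is non--vanishing there is a geodesic for which it equals a nonzero constant $c$. Applying $P(\frac{d}{dt})$ to the constant function $\sum_i r_{ii}=c$ yields $P(0)\,c=0$, so $P(0)=0$; together with part (1) this forces $\epsilon=1$, that is $P=\lambda Q$, and hence $d$ is even.
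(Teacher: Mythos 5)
Your proposal is correct and follows essentially the same route as the paper's proof: boundedness of the matrix entries $r_{ij}$ of $\K_\gamma$ (via homogeneity resp.\ compactness), the structure of bounded solutions of the constant--coefficient ODE $P(\frac{d}{dt})r_{ij}=0$, minimality of $P$ to pin down $P=Q$ or $P=\lambda Q$, and the trace/Ricci argument for the second part (stated contrapositively in the paper). If anything, your treatment of the minimality step --- explicitly rebuilding an admissible relation $\tilde P$ from the frequencies that actually occur and comparing degrees --- is more careful than the paper's, which tacitly assumes at that point that all nonzero roots of $P$ are already purely imaginary.
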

 \begin{proof}
 On a compact Riemannian manifold the norm of the Riemannian curvature
 tensor is bounded. Therefore, the matrix elements $\K_\gamma^{ij}$
 with respect to a $\nabla$--parallel orthonormal frame along a geodesic $\gamma$ are
 bounded functions in the geodesic parameter. The same is true for naturally
 reductive spaces. In fact, here $R$ is parallel with respect to the Ambrose--Singer connection 
 $\bar\nabla$ and geodesics of $\nabla$ are geodesics of $\bar\nabla$. 
 Hence, the matrix coefficients of $\K_\gamma$ are constant
 with respect to a $\bar\nabla$--parallel orthonormal frame along $\gamma$.
 Thus, the $\K_\gamma^{ij}$ are bounded functions, too. 
 Furthermore, the space of solutions of the ODE
 $P(\,\frac d{dt}\,)f(\,t\,)\,=\,0$ is spanned by functions of the form
 $P_\mu(\,t\,)e^{\mu t}$ where $\mu$ is a root of $P$ and $P_\mu(\,t\,)$ is a
 polynomial function in $t$, whose degree is strictly smaller than the multiplicity
 of $\mu$. The space of bounded solutions for $P(\,\frac d{dt}\,)f(\,t\,)\,=\,0$
 is thus spanned by functions of the form $c_\mu e^{\mu t}$ where $\mu$ is a
 purely imaginary root of $P$ and $c_\mu$ is a constant. Since $P$ is a
 real polynomial, there exist real numbers $0\, <\, \lambda_1\, <\, \cdots \,
 < \, \lambda_{\frac{\vartheta}{2}}$ such that the non--zero roots $\mu_i$ of $P$
 are given by $\pm i\lambda_i$. By minimality of $P$, we have  
 $P(\,\lambda\,)\,=\, Q(\,\lambda\,)$ or $P(\,\lambda\,)\, =
 \,\lambda\,Q(\,\lambda\,)$ depending on whether zero is a root of $P$ or not.

 \bigskip
 \noindent
 Assume now that the Ricci tensor is Killing and $P(\,\lambda\,)\,=\,Q(\,\lambda\,)$. We claim that $\Ric\,=\,0$: Let $\gamma$ 
 be a geodesic line. Taking the trace in \eqref{eq:LJR2}, we obtain
 \begin{equation*}
  (\;\frac{d^2}{dt^2}\, + \,\lambda_1^2\;) \cdot\; \cdots\; \cdot
  (\;\frac{d^2}{dt^2}\, + \,\lambda_{\frac{\vartheta}{2}}^2\;)
  \Ric(\;\dot\gamma,\;\dot\gamma\;)
  \;\;\equiv\;\;
  0
 \end{equation*}
 Since $\Ric(\,\dot\gamma,\,\dot\gamma\,)$ is constant by the Killing
 property, this reduces to
 \begin{equation*}
  \lambda_1^2\cdot\;\cdots\;\cdot\lambda_{\frac{\vartheta}{2}}^2\;\cdot\;
  \Ric(\;\dot\gamma,\;\dot\gamma\;)
  \;\;\equiv\;\;
  0
 \end{equation*}
 Because the $\lambda_\ell^2$ are strictly positive, we conclude that $\Ric\,=\,0$.
 \end{proof}

\bigskip\noindent
 For a naturally reductive space we will give in Section \ref{se:th:LJR} an algebraic proof of 
 Proposition \ref{p:LJR_STRUK}. Note that here the Ricci tensor is automatically a Killing tensor.

\bigskip\noindent
 Each symmetric space provides an example of a naturally reductive space with a linear Jacobi relation of
 order zero. In contrast experiments with computational programs like Maple
 or Mathematica indicate that on non--symmetric naturally reductive spaces linear
 Jacobi relations occur only very scarcely. Consequently, the following theorem should be 
 seen as the central result of this article:
 
 \begin{theorem}[Main Theorem]
 \label{th:LJR}
  On the following naturally reductive spaces there exist linear Jacobi
  relations:
  \begin{enumerate}
   \item The Berger metric on the $(\,2\,n\,+\,1\,)$--dimensional
           total space $\hat \MM^n(\,\kappa,\,r\,)$ 
           of the Hopf fibration of circles of radius $r$ over 
           an $n$--dimensional complex space form
           $\MM^n(\,\kappa\,)$ of holomorphic sectional curvature 
           $\kappa\neq 0$ satisfies
           \begin{equation}\label{eq:LJR_BS}
            \K_3
            \;\;=\;\;
            -\;c^2\,\|\,\cdot\,\|^2\,\K_1 
           \end{equation}
           where $c^2\,:=\,\frac14\,r^2\kappa^2$. 
           Also for each $c\,>\,0$ there exists  the same linear Jacobi relation \eqref{eq:LJR_BS} on 
           the $(\,2\,n\,+\,1\,)$--dimensional Heisenberg group $\hat \MM^n(\,0,\,\frac{1}{c^2}\,)$
           equipped with a multiple  $\frac{1}{c^2}$ of the canonical left--invariant 
           metric of type H.
   \item For a $7$--dimensional simply connected standard normal homogeneous space of positive
           sectional curvature and scalar curvature $\scal$ we have
           \begin{equation}\label{eq:LJR_NP}
            \K_3
            \;\;=\;\;
            -\;\frac{2\,\scal}{189}\,\|\,\cdot\,\|^2\,\K_1 
           \end{equation}
           except for $\S^7\, = \,\SU(\,4\,)/\SU(\,3\,)$ and $\S^7\,=\, \Sp(\,2\,)/\Sp(\,1\,)$.
   \item For a $6$--dimensional homogeneous strict nearly K\"ahler
           manifold of scalar curvature $\scal$ we have
           \begin{equation}\label{eq:LJR_NK}
            \K_5
            \;\;=\;\;
            -\,\frac{\scal}{24}\,\|\,\cdot\,\|^2\,\K_3\;-\;\frac{\scal^2}{3600}\,\|\,\cdot\,\|^4\,\K_1
           \end{equation}
  \end{enumerate}
  Moreover, Equations \eqref{eq:LJR_BS}--\eqref{eq:LJR_NK} define minimal linear Jacobi relations
  unless the sectional curvature is constant.
 \end{theorem}

\subsection{Some explanations and ideas for the proofs of the theorems}
\label{se:ingredients}
 The Berger metrics are most easily explained for $\kappa\,>\,0$. 
 At the heart of their construction is the fact that the Hopf  circles are no longer 
 isometrically embedded  in the ambient flat complex  space $\C^{n+1}$ 
 anymore unless $\hat \MM^n(\,\kappa,\,r\,)$ is the round sphere with $r^2\,=\,\frac{4}{\kappa}$.
 Hence, the radius $r$ is only properly defined via the relation  
 $L\,=\,2\,\pi\,r$ where $L$ denotes the intrinsic length of these circles. 
 In contrast, the great circles perpendicular to the Hopf foliation of a  Berger 
 sphere are always isometrically embedded 
 in the complex euclidean space $\C^{n+1}$, and their radius $r_\kappa$ satisfies
 $r_\kappa^2\,=\,\frac{4}{\kappa}$. Consequently, the constant $c$ from 
 \eqref{eq:LJR_BS} complies with the equation $c^2\,=\,\frac{r^2}{r_\kappa^2}\kappa$. 
 Here, $\frac{r^2}{r_\kappa^2}$ is the factor by which we have 
 rescaled the metric of constant sectional curvature in the direction of the Hopf foliation. 
 It is well known that the Berger metrics are naturally reductive, cf. \cite{AFF,Zil1} 
 and the references given there. In Section \ref{se:NR_FB} we recall the construction of the naturally 
 structures on $\hat \MM^n(\,\kappa,\,r\,)$; in particular, we will show how precisely
 the before mentioned constant $c$ enters into the torsion form and the curvature tensor 
 of the naturally reductive structure of $\hat \MM^n(\,\kappa,\,r\,)$, see Proposition \ref{p:BS}. 
 In \cite[Proposition 1]{Kap} it is shown that the metric of $\hat \MM^n(\,0,\,\frac{1}{c^2}\,)$ 
 is naturally reductive. An alternative proof, which is more suitable for our purpose, 
 can be found in \cite[Chpt. 9]{TV}. Here, the linear Jacobi relation \eqref{eq:LJR_BS} 
 is automatically minimal. In \cite{Gor} it was shown that, in fact, every left--invariant 
 metric on a Heisenberg group is naturally reductive, cf. p.~479 of that reference and also \cite{AFF,St1} 
 (However, we do not expect more linear Jacobi relations.)
 
 \bigskip\noindent
  Naturally reductive $3$--symmetric spaces with their canonical nearly K\"ahler structures
  are examples of homogeneous strict nearly K\"ahler manifolds. 
  According to \cite[Theorème 1.1]{But1}, there are no other examples. Also 
  it follows from \cite[Theorème 1.2]{But1} that every $6$--dimensional 
  simply connected naturally reductive $3$--symmetric space is a 
  standard normal homogeneous space from the following list:
  \begin{itemize}
  \item The round sphere $\S^6\,=\,\G_2/\SU(\,3\,)$ realized
           as the purely imaginary octonions of unit length
           with the nearly K\"ahler structure coming from the octonionic
           multiplication,
  \item The complex flag manifold $\bbF^3\,=\,\SU(\,3\,)/\U(\,1\,)\times \U(\,1\,)$ seen as
           the twistor space of $\CP^2$,
  \item The complex projective space $\CP^3\,=\,\SO(\,5\,)/\U(\,2\,)$ seen as the twistor
           space of $\S^4$,
  \item The nearly K\"ahler structure on $\S^3\,\times\,\S^3\,=\,\SU(\,2\,)\times
           \SU(\,2\,)\times \SU(\,2\,)/\SU(\,2\,)$ constructed by Ledger/Obata.
 \end{itemize}
 
 \noindent 
 For the last three mentioned spaces, \eqref{eq:LJR_NK} holds as a minimal linear Jacobi relation. 
 Inhomogeneous strict nearly K\"ahler structures on $\S^6$ and $\S^3\,\times\,\S^3$
 were recently discovered, cf. \cite{FH}. (However, we don't think that here Jacobi
 relations exist.) 

 \bigskip\noindent
 Simply connected  standard normal homogeneous spaces of positive sectional
 curvature in dimension seven are given as follows, cf.~\cite{Ber,Wil,WZ,Zil2}: 
 \begin{itemize}
  \item the round sphere $\S^7$ equally
          written as $\,\SO(\,8\,)\,/\,\SO(\,7\,)$ or $\Spin(\,7\,)\,/\,\G_2$,
  \item the strict Berger sphere $\SU(\,4\,)\,/\,\SU(\,3\,)$ and its
           quaternionic analogue $\Sp(\,2\,)\,/\,\Sp(\,1\,)$,
  \item the so--called squashed sphere $\S^7_{\mathrm{squashed}}\, = \,
           \Sp(\,2\,)\times\Sp(\,1\,)\,/\,\Sp(\,1\,)\,\times\,\Sp(\,1\,)$,
  \item Berger's manifold $V_1\,=\,\SO(\,5\,)\,/\,\SO(\,3\,)$,
  \item the Wilking space $V_3\,=\,\SU(\,3\,)\,\times\,\SO(\,3\,)\,/\,\U(\,2\,)$.
 \end{itemize}

 \noindent
 The last three mentioned  Riemannian manifolds $ \S^7_{\mathrm{squashed}}$,
 $V_1$ and $V_3$ with their standard normal metrics are homogeneous proper nearly 
 parallel $\G_2$--spaces, cf. \cite{AlS,BG1,FKMS}. Here, \eqref{eq:LJR_NP} holds as 
 a minimal linear Jacobi relation.  The normal metric of the $7$--sphere 
 $\Spin(\,7\,)/\G_2\,$ is  the round metric. Furthermore, this is a homogeneous nearly 
 parallel $\G_2$--space which can be seen as the analogue of $\,\G_2/\SU(\,3\,)$.
 The normal metric of the $7$--sphere $\SU(\,4\,)/\SU(\,3\,)$ is a strict Berger metric (not the round metric). 
 There is the linear Jacobi relation \eqref{eq:LJR_BS}, which is, however, different from
 \eqref{eq:LJR_NP}. Its quaternionic analogue $\Sp(\,2\,)/\Sp(\,1\,)$
 is the only example of a simply connected  standard normal homogeneous space 
 of positive sectional curvature lacking of a linear Jacobi relation.

 \bigskip
 \noindent
 The proof of Theorem \ref{th:JR}, given in Section \ref{se:Th:JR}, shows that, in addition to Remark \ref{re:JR},
 the occurrence of Jacobi relations on naturally reductive  spaces is also a direct 
 consequence of the Theorem of Cayley--Hamilton. In terms of linear Jacobi
 relations, however, there is a subtle difference between the naturally reductive structures
 of $\hat \MM^n(\,\kappa,\,r\,)$ and $\hat\MM^n(\,0,\,\frac{1}{c^2}\,)$ with $n\geq 2$, 
 and those of the other spaces mentioned in Theorem \ref{th:LJR}.  For the $6$--dimensional homogeneous strict  nearly K\"ahler manifolds 
 $\S^6$, $\bbF^3$, $\CP^3$ and $\S^3\,\times\,\S^3$, 
 and the homogeneous nearly parallel $\G_2$--spaces $\S^7$, 
 $\S^7_{\mathrm{squashed}}$, $V_1$ and $V_3$, the torsion form  $\tau$ 
 of the standard normal homogeneous structure is pointwise a generalized vector cross product, 
 which means, by definition, that $\tau$ is a nowhere vanishing $3$--form 
 and the eigenvalues of the  skew--symmetric endomorphism 
 $\tau_X\,:\, T_pM\, \longmapsto \,T_pM,\; Y\longmapsto \tau(\,X,\,Y\,)$ do 
 not depend on $X$ as long as the latter vector is chosen from the unit sphere $\S(\,T_pM\,)$,
 see Section \ref{se:GVCP}. In this situation one can show that the Jacobi relation guarantied 
 by Theorem \ref{th:JR} is automatically linear. Differently said, the existence of linear 
 Jacobi relations on the four plus four make eight before mentioned $6$-- and  $7$--dimensional  naturally
 reductive spaces is a natural consequence of a distinguished algebraic  property of
 their torsion forms. Moreover, these are the only non--trivial examples of simply connected naturally 
 reductive spaces whose torsion forms have this property, see Theorem \ref{th:GVCP}. This fact is certainly 
 interesting in itself, but can also be seen  as a first step towards the classification of naturally reductive spaces 
 with linear Jacobi relations.
 
 \bigskip
 \noindent
 The proof of Theorem \ref{th:GVCP} is based on the classification of
 generalized vector cross products given in \cite{BMS}. They exist only in dimensions 
 three, six and seven and are unique in each dimension. Therefore, in  Sections \ref{se:NR_NK} and \ref{se:NR_NP}
 we consider the canonical connections $\nabla^c$ associated with $6$--dimensional strict nearly 
 K\"ahler manifolds and ($7$--dimensional) nearly parallel $\G_2$--spaces. 
 In both cases $\nabla^c$ is an affine metric connection whose torsion form (called intrinsic torsion)
 is a generalized vector cross product. There remains the question when 
 $\nabla^c$ is a naturally reductive structure. Given the results of \cite{But1},\cite{But2} 
 mentioned before, the answer in dimension six is straightforward. In the simply connected case it gives the 
 four naturally reductive $3$--symmetric spaces mentioned before. On the other hand, the classification of simply connected  
 homogeneous nearly parallel $\G_2$--spaces obtained in \cite{FKMS} is much richer, 
 making the answer in dimension seven more difficult. However, by Theorem \ref{th:OR} we can a priori 
 exclude all those spaces which are not proper (except for the round sphere $\S^7$) essentially by the following argument: 
 If a simply connected homogeneous nearly parallel $\G_2$--space fails to be proper, then there
 exists more than one nearly parallel $\G_2$--structure. 
 Now different nearly parallel $\G_2$--structures have different canonical connections;
 if one of them were naturally reductive, the others would be too, which contradicts the 
 uniqueness assertion of Theorem \ref{th:OR}. In addition, we will determine which of the 
 metric tensors underlying simply connected homogeneous nearly parallel $\G_2$--spaces 
 are naturally reductive, cf. Theorem \ref{th:CLASS_G2}.
 It turns out that only the homogeneous nearly parallel $\G_2$--metrics of Aloff--Wallach spaces 
 $\NM(\,k,\,l\,)$ different from $\NM(\,1,\,1\,)$ fail to have this nice property. 
 We conclude that exclusively the four standard normal homogeneous spaces $\Spin(\,7\,)/\G_2\,$, $ \S^7_{\mathrm{squashed}}$,
 $V_1$ and $V_3$ are equipped with nearly parallel $\G_2$--structures which return the naturally reductive structures.

 \bigskip
 \noindent
 In Section \ref{se:SP} we will draw the same conclusion, however,
 this time starting from the list of $7$--dimensional simply connected naturally reductive spaces given in \cite{St3}. 
 Here, we can rule out all total spaces of naturally reductive $\S^1$--fiber bundles over $6$--dimensional  
 compact  base spaces. Moreover, we will show that the naturally reductive quotients $\Sp(\,2\,)\,\times\,\Sp(\,1\,)\,/\,\Sp(\,1\,)\,\times\,\Sp(\,1\,)$ and 
 $\SU(\,3\,)\,\times\,\SO(\,3\,)\,/\,\U(\,2\,)$ do not contain more examples than those already known.
 Thus, we stay again with the  previously mentioned four standard normal homogeneous spaces.
 Principally, it is also possible to decide by direct calculations whether or not the torsion form of a given naturally reductive space
 is a generalized vector cross product. In Appendix \ref{se:N11} we will give these 
 calculations for the Wilking quotient and show that, in fact, only for the standard 
 normal space $V_3$ the torsion form is a generalized vector cross product.

 \bigskip
 \noindent
 Quite differently, the torsion forms of $\hat \MM^n(\,\kappa,\,r\,)$ and $\hat \MM^n(\,0,\,\frac{1}{c^2}\,)$
 are  clearly not generalized vector cross products unless $n \,=\, 1$. Here, the Jacobi relation
 from Theorem \ref{th:JR} is not linear, which makes the existence of a linear Jacobi relation less obvious;
 it is only explained by the fact  that certain components of $\K_0(\,X\,)$ vanish in a 
 decomposition of $\End^{\mathrm{sym}}T_pM$ with respect to $\tau_X\star$, 
 see Proposition \ref{p:EXIST_LJR}. 
 Regarding the minimality of the Jacobi relations from Theorem \ref{th:LJR},
 for \eqref{eq:LJR_BS} and \eqref{eq:LJR_NP} this follows immediately from Proposition \ref{p:LJR_STRUK}.
 In order to see the minimality of \eqref{eq:LJR_NK}, however,
 we need several characterizations of the nearly K\"ahler round sphere $\S^6$, see Proposition \ref{p:CHSC}.
 These loosely gathered arguments are implemented in the proof of Theorem \ref{th:LJR}, which can be found at 
 the end of Section \ref{se:th:LJR}. In Section~\ref{se:EX}  we will compare our results with the sporadic examples  of  linear 
 Jacobi relations observed before, cf.~\cite{Ar,AAN,Gon,GN,MNT,NT} including a discussion of the
 $3$--dimensional  case.
 
 \bigskip
 \noindent
 We know of no other examples of non--symmetric \textsc{g.o.}--spaces  
 with linear Jacobi  relations which were not covered by Theorem \ref{th:LJR}. 
 In \cite{AN} it is claimed that the same Jacobi
 relation \eqref{eq:LJR_NK} also holds on the generalized Heisenberg group
 $\NM^6$ with its canonical metric of type H, cf. \cite{Kap}. 
 However, one easily sees that the calculations from \cite{AN} contain an error. 
 In a forthcoming paper we will show that 
 there does not exist a linear Jacobi relation on  $\NM^6$, cf. \cite{JW2}.
\section{Proof of Theorem~\ref{th:JR}}
\label{se:Th:JR}
 We will first show how to reduce the occurrence of Jacobi relations on naturally reductive spaces 
 to a question of linear algebra. For each $X\,\in\, T_pM$, consider the linear operator 
 $\scrT(\,X\,)\,=\,-\frac12\,\tau_X\star$ on $\End^{\mathrm{sym}}\,T_pM$ see 
 \eqref{eq:def_of_derivation}. Since $\tau_X(\,X\,)\,=\,0$, it induces the partial algebraic derivation
 \begin{equation}\label{eq:Recursion_Operator}
  \scrT:\;\;\Jac^\bullet TM\;\longrightarrow\;\Jac^{\bullet+1}TM,\qquad
  \K\;\longmapsto\;\Big(\;X\;\longmapsto\;\scrT\;\K\,(\,X\,)\;:=\;\scrT(\,X\,)\,\K(\,X\,)\;\Big)
 \end{equation}
 In the following we denote by $\K_k\,$ the symmetrized $k$--th covariant derivative of the 
 Riemannian curvature tensor defined in Equation \eqref{eq:SCD}.  The following lemma shows that 
 naturally reductive spaces are $\frakC_0$--spaces, cf. \cite[Chpt. 2.9]{BTV}:

 \begin{lemma}
 \label{le:Lemma4}
  We have
  \begin{equation}\label{eq:recursion2}
   \K_k
   \;\;=\;\;
   \scrT^k \K_0
  \end{equation}
 \end{lemma}
 
 \begin{proof}
 The subspace of $G\,:=\,\Tr(\,\bar\nabla\,)$--invariant sections of $\Jac^k TM$ will be denoted by
 $\Gamma^G(\,\Jac^kTM\,)$. We have $\K_0\,\in\,\Gamma^G(\,\Jac^0TM\,)$.
 By induction, it suffices to show that $\scrT \K\,\in\,\Gamma^G(\,
 \Jac^{k+1}TM\,)$ and $\scrT(\,X\,)\, \K(\,X\,)\,=\,\nabla_X\, \K\,(\,X\,)$
 for each $\K\,\in\,\Gamma^G(\,\Jac^kTM\,)$: By the very definition of the transvection group,
 every $G$--invariant section of a tensor bundle over $M$ is $\bar\nabla$--parallel. From the formula
 $\bar\nabla\,=\,\nabla\,+\,\frac{1}{2}\tau$, we thus see that $\nabla_X$ acts 
 on $\K\,\in\,\Gamma^G(\,\Jac^kTM\,)$ by the total algebraic derivation
 $-\frac12\,(\,\tau_X\star \K\,)(\,Y\,)\,:=\,\scrT(\,X\,)\,\K(\,Y\,)\, + 
 \,\frac{\,k\,+\,2\,}{2}\,\K(\,\tau_XY\,)$. Since $\tau$ is alternating, 
 we conclude that  $\nabla_X\, \K\, (\,X\,)=\,\scrT(\,X\,)\,\K(\,X\,)$  for each 
 $\K\,\in\,\Gamma^G(\,\Jac^kM\,)$. Furthermore, because $\nabla$ and $\bar\nabla$ 
 both are $G$--invariant, so is their difference $\frac\tau2\,=\,\bar\nabla\,-\,\nabla$. 
 Hence, $\scrT$ is $G$--invariant, too. We conclude that
 \begin{equation}\label{eq:recursion1}
  \scrT\,\Big(\;\Gamma^G(\;\Jac^kTM\;)\;\Big)
  \;\;\subset\;\;
  \Gamma^G(\;\Jac^{k+1}TM\;)
 \end{equation}
 Equation \eqref{eq:recursion2} follows by induction.
 \end{proof}
 
 \bigskip
 \noindent
 For the rest of this section, let $\K\,:=\,\K_0\,:\,X\,\longmapsto\,\K(\,X\,)\,:=\,R(\;\cdot\;,\,X,\,X\,)$ denote the 
 symmetrized Riemannian curvature tensor.

 \begin{corollary}
 \label{co:JR}
 Let $(\,M,\,g,\,\bar\nabla\,)$ be a naturally reductive space with torsion 
 form $\tau$ and associated linear operator   $\scrT$ on $\Jac^{\mathrm{\bullet}}\,TM\,$ 
 defined by \eqref{eq:Recursion_Operator}.
 The Jacobi relation~\eqref{eq:JR} holds if and only if 
 the polynomial $P(\,\lambda\,)\,:=\,\lambda^{d+1}\,+\, \sum_{\ell=1}^{d+1}a_{\ell}\,\lambda^{d+1-\ell}$ 
 satisfies $P(\,\scrT(\,X\,)\,)\K\,(\,X\,)\,Y\,=\,0$  for all $X,\,Y\in T_pM$.
 This Jacobi relation is linear if and only if 
 each $a_\ell$ is constant on the unit sphere bundle $\S(\,T M\,)$.
 \end{corollary}
 \begin{proof}
  The first assertion follows directly from \eqref{eq:Recursion_Operator} and \eqref{eq:recursion2}. 
  Also $a_\ell$ is constant on $\S(\,T M\,)$
  if and only if $a_\ell\,=\,0$, or $\ell$ is even and there exists a real number $c_\ell$ such that 
  $a_\ell\,=\,c_\ell\,\|\,\cdot\,\|^{\frac{\ell}{2}}$. The result follows.
 \end{proof}

 \begin{lemma}
 \label{le:Killing_tensors}
  Let $(\,M,\,g,\,\bar\nabla\,)$ be a naturally reductive space.
  Then, every $\Tr(\,\bar\nabla\,)$--invariant symmetric tensor
  field is Killing.
 \end{lemma}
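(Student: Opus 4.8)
The plan is to exploit the defining property of the Ambrose--Singer connection, namely that every $G$--invariant tensor field is automatically $\bar\nabla$--parallel, and then to extract the Killing condition from the relation $\bar\nabla\,=\,\nabla\,+\,\frac12\,\tau$. So I would start by letting $a\,\in\,\Gamma(\,\Sym^kT^*M\,)$ be an arbitrary $G$--invariant symmetric tensor field; by the very definition of a reductive structure this gives $\bar\nabla a\,=\,0$, exactly as was used for $\K_0$ in the proof of Lemma~\ref{le:Lemma4}.

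First I would expand $\bar\nabla a$ in terms of $\nabla a$ and the torsion. Writing $\star$ for the total algebraic action of a skew--symmetric endomorphism on covariant tensors, the difference tensor $\frac12\tau$ contributes precisely through $\tau_X$, so that $\bar\nabla_X a\,=\,\nabla_X a\,+\,\frac12\,\tau_X\star a$. Since $\bar\nabla a\,=\,0$, this yields for all $X$
$$
 (\,\nabla_X a\,)(\,Y_1,\,\ldots,\,Y_k\,)
 \;\;=\;\;
 -\,\frac12\,(\,\tau_X\star a\,)(\,Y_1,\,\ldots,\,Y_k\,)
 \;\;=\;\;
 \frac12\,\sum_{i=1}^k a(\,Y_1,\,\ldots,\,\tau_X Y_i,\,\ldots,\,Y_k\,)
$$
This is the same computation that underlies Lemma~\ref{le:Lemma4}, where the corresponding identity $\nabla_X\,\K(X)\,=\,\scrT_X\,\K(X)$ was obtained for $G$--invariant sections of $\Jac^\bullet TM$.

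The key step, and the one that makes the conclusion immediate, is to evaluate this identity on the diagonal $Y_1\,=\,\cdots\,=\,Y_k\,=\,X$. Each summand on the right then carries the factor $\tau_X X$, which vanishes because $\tau$ is an alternating $3$--form. Hence $(\,\nabla_X a\,)(\,X,\,\ldots,\,X\,)\,=\,0$ for every $X$, which is exactly the Killing condition $(\nabla_X a)(\,X\,)\,=\,0$ recalled after Theorem~\ref{th:JR}. I do not expect any genuine obstacle here: beyond the bookkeeping for the action $\star$ on symmetric tensors, the only substantive ingredient is the alternating property $\tau_X X\,=\,0$, so the proof should be this short.
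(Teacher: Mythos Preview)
Your argument is correct. The paper's own proof is phrased geometrically---parallel transport along a geodesic is realized by a one--parameter subgroup of $G$, so a $G$--invariant tensor evaluated on $\dot\gamma$ is constant---whereas you carry out the equivalent infinitesimal computation directly from $\bar\nabla a\,=\,0$ and $\tau_XX\,=\,0$, exactly in the style of Lemma~\ref{le:Lemma4}. The two arguments have the same content (the vanishing $\tau_XX\,=\,0$ is precisely why $\nabla$-- and $\bar\nabla$--geodesics coincide), and the paper in fact offers as an alternative ``copy the proof of \cite[Lemma~2.3]{OR2}'', which is essentially your computation.
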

 \begin{proof}
  The parallel transport along an arbitrary geodesic can be realized
  by a $1$--parameter subgroup of $\Tr(\,\bar\nabla\,)$. Thus, we can use
  the same argument as for \textsc{g.o.}--spaces mentioned after Theorem \ref{th:JR}.
  Alternatively one can copy the proof of \cite[Lemma 2.3]{OR2}.
 \end{proof}

 \begin{corollary}
 \label{co:Killing_tensors} 
  Let $(\,M^n,\,g,\,\bar\nabla\,)$ be a naturally reductive space with torsion
  form $\tau$. Following the notation of Theorem \ref{th:JR}, let 
  $P_{\tau}(\,\lambda\,;\,X\,)\,:=\,\det\big(\lambda\,\id\,-\,\scrT(\,X\,)
  |_{\End^{\mathrm{sym}}\,\{X\}^\perp} \big)$ and $\vartheta \,:=\,2\, \lceil \frac{d}{2}\rceil$
  with $d\,+\,1\,:=\,{n \choose 2}$. There exist $\Tr(\,\bar\nabla\,)$--invariant symmetric  Killing tensors $a_\ell$ 
  of degree $\ell$ for $\ell\,=\,2,\, 4,\,\ldots,\,\vartheta$ such that  $P_\tau(\,\lambda;\,X\,)\,=\,
  \lambda^{d\,+\,1}\,+\, \sum_{k\,=\,1}^{\frac{\vartheta}{2}} a_{2\,k}(\,X\,)\,\lambda^{d\,+\,1\,-\,2\,k}$
  for each $X\,\neq\,0$.
 \end{corollary}
 
 \begin{proof}
  The  torsion form $\tau$ is $g$--invariant $\tau_{\d_pg\,X}\,=\,\d_pg\,\circ\, 
  \tau_X\,\circ\, (\d_pg)^{-1}$ for every $g\,\in\,G\,:=\,\Tr(\,\bar\nabla\,)$. 
  Because the characteristic polynomials of conjugated endomorphisms are the same, 
  we see that the coefficients $\tilde b_i\,=\,\tilde b_i(X)$ of the characteristic polynomial of 
  $\tau_X\,\in\,\End^{\mathrm{skew}}T_pM$ are $G$--invariant and hence
  Killing tensors according to Lemma~\ref{le:Killing_tensors}. The coefficients $b_j(\,X\,)$ of the 
  characteristic polynomial of $\tau_X\,\in\,\End^{\mathrm{skew}}{\{X\}^\perp}$ 
  are given by $b_j(\,X\,)\,=\,\tilde b_{j+1}(\,X\,)$ for $j\,=\,0,\,\ldots,\,n\,-\,1\,$ and $X\,\neq\,0$. 
  Therefore, $b_j$ is a $G$--invariant Killing tensor, too. Due to the fact that every symmetric polynomial
  is a uniquely determined polynomial in the elementary symmetric polynomials, we know 
  that the coefficients $\tilde a_\ell\,=\,\tilde a_\ell(\,X\,)$ of  $P_{\tau}(\,\lambda\,;\,X\,)$ 
  are polynomials in the $b_j$'s. In particular, $\tilde a_\ell$ is a $G$--invariant Killing tensor, too.  
  Furthermore, $\tilde a_\ell$ vanishes if $d\,+\,1\,-\,\ell$ is odd
  since $\scrT(\,X\,)|_{\End^{\mathrm{sym}}\,\{X\}^\perp}$ is skew--symmetric 
  with respect to the canonical scalar product on $\End^{\mathrm{sym}}\, \{X\}^\perp$.
  The assertion of the corollary follows by setting $a_\ell\,:=\,\tilde a_{d\,+\,1\,-\,\ell}$.
 \end{proof}

 \paragraph{Proof of Theorem~\ref{th:JR}}
 By the Theorem of Cayley--Hamilton, we have
 $$
  P_{\tau}(\scrT(\,X\,);\,X\,)|_{\End^{\mathrm{sym}}\,\{X\}^\perp}\;=\;P_{\tau}
 (\scrT(\,X\,)|_{\End^{\mathrm{sym}}\,\{X\}^\perp};\,X\,)\;=\;0
 $$ 
 for all $0\,\neq\,X\,\in\,T_pM$. Evaluating this identity on $\K(\,X\,)\,\in\,\End^{\mathrm{sym}}\{X\}^\perp$ we obtain that 
 $P_{\tau}(\scrT(\,X\,);\,X\,)\,\K(\,X\,)\,Y\,=\,0$ for all $X,\,Y\in T_pM$. 
 The desired Jacobi relation follows in accordance with Corollaries~\ref{co:JR} and \ref{co:Killing_tensors}.\qed

\section{Fiber Bundles of Naturally Reductive Spaces} 
\label{se:NR_FB}
 Let $(\,G,\,K,\,B\,)$ be  an almost effective naturally reductive triple where $K$ contains 
 a proper closed normal subgroup $H\subset K$ whose Lie algebra $\frakh$ is a 
 non--degenerate subspace of $(\,\frakg,\,B\,)$. 
 We will construct a $1$--dimensional family of naturally reductive triples 
 $(\,\hat G,\,\hat K,\,\hat B_s\,)$ such that $\hat G/\hat K$ is the total space 
 of a fiber bundle over $G/K$ and the naturally reductive metric on $\hat G/\hat K$ 
 associated with $\hat B_s$ is bundle--like. 
 From a slightly different point of view, the same construction is also 
 given in  \cite{Zil1}, see also \cite[Chpt. 9\,H]{Bes}. Maybe the most prominent example is the family of 
 non--homothetic  naturally reductive  metrics on the Wilking manifold 
 $\SU(\,3\,)\,\times\,\SO(\,3\,)/\U(\,2\,)$ providing examples of 
 normal homogeneous  spaces of positive sectional curvature. In a similar way one obtains 
 the naturally reductive structures on the total spaces $\hat \MM^n(\,\kappa,\,r\,)$ 
 of the Hopf fibrations over some complex space form $\MM^n(\,\kappa\,)$ 
 with $\kappa\neq 0$, i.e., Berger spheres and their non--compact counterparts
 which fiber over the complex hyperbolic space. 
 Strictly speaking the  Heisenberg group $\hat \MM^n(\,0,\,\frac{1}{c^2}\,)$ does not follow this setup 
 but can be seen  as the limit case $\kappa\to 0$, $r\,\to\,\infty$ such that $c\,=\,\frac12\,r\kappa\,=\,\mathrm{const}$.
 Moreover, we establish the 
 existence of  naturally reductive structures on all simply connected homogeneous 
 nearly  parallel  $\G_2$--spaces of types 2 and 3, see Section~\ref{se:CLASS_G2}.

 \bigskip\noindent
 In our situation, $G/H$ is a homogeneous space and $K/H$ is a Lie group, too. 
 Since $\frakh$ is a non--degenerate subspace of $\frakk$, the Lie algebra of $K/H$ can be identified 
 with $\frakh^\perp$. Then, both $\frakh$ and $\frakh^\perp$ are ideals 
 of $\frakk$. Also there exists a natural  $G$--equivariant fibration 
 $f\,\colon\,G/H\,\longrightarrow\,G/K$.
 Furthermore, set $\hat G\,:=\,G\times K/H$ and consider $K$ as a
 subgroup $\hat K\subset\hat G$ by $\hat K\,:=\,\{\,(\,k,\,k\cdot H\,)
 \,\mid\,k\,\in\,K\,\}$. Thus, also $\hat G$ acts almost effectively on $G/H$
 via $(\,g',\,k\cdot H\,)\,g\cdot H\,:=\,g'g\,k^{-1}\cdot H$,
 $f$ is equivariant with respect to the actions of $\hat G$ and
 $G$, and the inclusion map $G\subset \hat G$ induces an equivariant isomorphism 
 $G/H\cong\,\hat G/\hat K$ of differentiable manifolds. 
 Clearly, $\hat\frakg\,=\,\frakg\,\oplus\,\frakh^\perp$ is the Lie algebra of $\hat G$.

 \begin{lemma} (\cite[Theorem~3]{Zil1})\label{le:Nor_SG}
  In the above situation consider the family of non--degenerate symmetric invariant bilinear forms on $\hat\frakg$ 
  given by $\hat B_s\,:=\,B\,\oplus\,\frac1s\, B|_{\frakh^\perp\times
  \frakh^\perp}$ ($s\,\neq\,0$).
  \begin{enumerate}
   \item If $B|_{\frakh^\perp\times \frakh^\perp}$ is positive definite,
           then $(\,\hat G,\,\hat K,\,\hat B_s\,)$ is a naturally reductive triple
           for $s\,>\,-1\,$. Also the naturally reductive metric induced by $\hat B_s$ converges for $s\,\to\,0$ to the 
           naturally reductive metric induced by $\hat B_0\,:=\,B$ on $G\,/\,H\,\cong\,\hat G\,/\,\hat K$. 
   \item If $B|_{\frakh^\perp\times \frakh^\perp}$ is negative definite,
           then $(\,\hat G,\,\hat K,\,\hat B_s\,)$ is a naturally reductive triple for $s\,<\,-1\,$.
  \end{enumerate}
  The so constructed metrics on $G\,/\,H$ are $\hat G$--invariant and bundle--like,
  i.e., $f$ is a Riemannian submersion. Conversely, if the adjoint 
  representation of $K/H$ is irreducible and does not appear 
  as a component of the isotropy representation of $G/K$, then every
 $\hat G$--invariant  bundle--like metric on $G\,/\,H$ is obtained 
 in this way by a unique parameter $s$.
 \end{lemma}

 \begin{proof}
  Since $B$ is $\Ad(\,G\,)$--invariant, it is clear that  $\hat B\,:=\,\hat B_s$ 
  is  $\Ad(\,\hat G\,)$--invariant. It remains to be determined
  when $\hat B$ is positive definite on $\hat \frakm\times \hat \frakm$, 
  where $\hat \frakm$ denotes the orthogonal complement
  of $\hat\frakk$ in $\hat\frakg$ with respect to $\hat B$.
  By assumption, there exist $B$--orthogonal splittings
  $\frakg\,=\,\frakk\,\oplus\,\frakp\,=\,\frakh\,\oplus\,\frakm$ such that
  $B|_{\frakp\times \frakp}$ is positive definite. Also the orthogonal
  complement of $\frakh$ in $\frakk$ is subjected to an $H$--invariant
  $B$--orthogonal decomposition $\frakm\,=\,\frakp\, \oplus\, \frakh^\perp$.
  Furthermore, it is clear that
  $$
   \hat\frakk
   \;\;=\;\;
   \{(\,X,\,X_{\frakh^\perp}\,)\;\mid\;X\in \frakk\,\}
  $$
  where $X_{\frakh^\perp}$ denotes the orthogonal projection of $X$ onto the 
  orthogonal complement of $\frakh$ in $\frakk$.
  Then, we have $\hat B$--orthogonal splittings 
  $\hat\frakg\,=\,\hat\frakk\,\oplus\,\hat\frakm$ and
  $\hat \frakm\,=\, \frakp\oplus\hat\frakh^\perp\,$ depending on $s$, where 
  $$
   \hat\frakh^\perp
   \;\;:=\;\;
   \{(\,Z,\,-\,s\,Z\,)\;\mid\;Z\in \frakh^\perp\,\}
  $$
  is the orthogonal complement of $\frakp$ in $\hat \frakm$ with respect
  to $\hat B$. Thus, we can canonically  identify $ \hat\frakh^\perp$ 
  with $\frakh^\perp$. Then, the restriction of $\hat B$ to $\frakh^\perp\times \frakh^\perp$  
  is given by $(\,1\,+\,s\,)B|_{\frakh^\perp\times
  \frakh^\perp}$. So that the latter is positive definite,
  we need either that $B(\,Z,\,Z\,)\,>\,0$ and $s\,>\,-1$, or $B(\,Z,\,Z\,)\,<\,0$ 
  and $s\,<\, -1$ for all $Z\in \frakh^\perp$. Also
  $$
   (\,1\,+\,s\,)B|_{\frakh^\perp\times \frakh^\perp}
   \;\;\underset{s \to 0}\,\longrightarrow\;\;
   B|_{\frakh^\perp\times \frakh^\perp}
  $$
  If $B|_{\frakh^\perp\times \frakh^\perp}$ is positive definite, then the previous implies that 
  the induced metric tensor converges for $s\,\to\,0$  to the naturally reductive metric on $G/H$ defined 
  by $B$. 

  \bigskip
  \noindent 
  For the last assertion, it is clear that  $f$ is a Riemannian submersion with respect to the naturally reductive metrics 
  induced by $\hat B$ and $B$. Conversely, given a $\hat G$ invariant bundle--like metric on $G\,/\,H$,
  we consider the orthogonal decomposition  $T G\,/\,H\,=\,\scrH\oplus\scrV$ of the tangent bundle 
  into the horizontal and the vertical subbundle. The vertical bundle is $\hat G$--invariant and so is its 
  orthogonal complement $\scrH$. Then, the isotropy representation of $\hat K$ on 
  horizontal vectors is the isotropy representation of $K$, whereas it is the adjoint representation of 
  $K/H$ on vertical vectors. Since we assume that the adjoint representation of $K/H$ is irreducible 
  and not a component of the isotropy representation of $G/K$, we have $\scrH\bot\scrV$ also for every other 
  $\hat G$--homogeneous metric by Schur's Lemma. Therefore, a bundle--like $\hat G$--invariant metric 
  is uniquely determined on some vertical space $\scrV_p$. This metric 
  has to be equal to $\hat B_s$ for some $s$ again by Schur's Lemma because $\scrV$ 
  is an irreducible  $K/H$--module.
 \end{proof}

\begin{remark}\label{re:scaling}
 Although $\hat B|_{\hat\frakh^\perp\times \hat\frakh^\perp}$ 
 becomes $|\,1+s\,|B|_{\frakh^\perp\times \frakh^\perp}$ under 
 the natural identification $\hat\frakh^\perp\,\cong\,\frakh^\perp$ (see the proof of Lemma \ref{le:Nor_SG}), the metric tensor on $\scrV$ gets 
 effectively scaled by the factor $\frac1{|\,1+s\,|}$ via the correct identification $\scrV\,\cong\,\hat\frakh^\perp$;
 this is due to the fact that the vertical vector fields induced by $X$ and $X\oplus -\,s\,X$ via the infinitesimal left action of
 $\frakg$ and $\hat \frakg$, respectively, on $G/H\,=\,\hat G/\hat K$ are related by $(1\,+\,s)X^*\,=\,(\,X\oplus -\,s\,X\,)^*$ 
 for each $X\in \frakh^\perp$. As a result, in the positive definite case values $s\,>\,0$  correspond to a so called squashing of $G/H$ along the fibers 
 of $f$, whereas for $s < 0$ these fibers get stretched relative to the naturally reductive metric induced by $B$ on $G/H$.
\end{remark}

\begin{example}\label{ex:Wi}
We let $G\,:=\,\SU(\,3\,)$, $K\,:=\,\U(\,2\,)$ and $H\,:=\,\{\diag(\,e^{i\varphi}\,,\,e^{i\varphi}\,)\,\mid\,\varphi\in\R\,\}$.
         Then, $H$ is the center of $K$ and hence a normal subgroup with $K/H\,=\,\SO(\,3\,)$. Let $B$ be the negative of the Killing form
          of $\su(\,3\,)$. We obtain a family of non--homothetic $\SU(\,3\,)\,\times\,\SO(\,3\,)$--invariant naturally reductive metrics on the Wilking manifold 
         $\NM(\,1,\,1\,)$ parameterized by $s>-1$. These metrics are bundle--like over $G/K\,=\,\CP^2$, i.e., the projection map is a Riemannian submersion.
         For $s>0$ we obtain normal metrics of positive sectional curvature, cf. \cite{AW,Wil}.
\end{example}

 \noindent
 Let us now consider specifically the case $\dim\,K/H\,=\,1$. Then, $\frakh^\perp$
 is a $1$--dimensional ideal of $\frakk$ and hence contained in the center
 of $\frakk$. Let $Z_0$ be a unit vector $B(\,Z_0,\,Z_0\,)\,=\,\pm\,1$ of  $\frakh^\perp$.
 Set $\hat B_s\,:=\,B\,\oplus\,\frac1s\, B|_{\frakh^\perp\times \frakh^\perp}$ 
 (where $s$ has to be chosen as in Lemma \ref{le:Nor_SG}).

 \begin{lemma}\label{le:T_and_R}
  There exists a vertical unit vector $V_0$
  such that the torsion form and the curvature tensor of the naturally reductive triples $(\,\hat G,\,\hat K,\,\hat B_s\,)$ ($s\,\neq\,0$) and $(\,G,\,H,\,B\,)$ ($s\,=\,0$) are:
  \begin{equation}\label{eq:HAT_TAU}
   \hat\tau
   \;\;=\;\;
   \tau\;+\;\frac1{\sqrt{|\,1\,+\,s\,|}}\,\rho_*(Z_0)\,\wedge\,V_0^\flat
  \end{equation}
  and:
  \begin{equation}\label{eq:HAT_R}
   \hat R
   \;\;=\;\;
   \bar R\;+\;\frac1{|\,1\,+\,s\,|}\,\rho_*(Z_0)\,\otimes\,\rho_*(Z_0)
  \end{equation}
   Here, $\tau$ and $\bar R$ are the torsion form and the curvature tensor, respectively,
   of the canonical connection $\bar\nabla$ associated with $(\,G,\,K,\,B\,)$, and $\rho_*$ 
   is the linearized isotropy representation of $\frakk$. Also $V_0^\flat$ denotes the metric dual $1$--form. 
 \end{lemma}
 \begin{proof}
 We have $f_*\,\hat \tau(\,X,\,Y\,)
 \,=\,\tau(\,f_*X,\,f_*Y\,)$ for all $X,\,Y\in \frakp$. Also
 $$
  \hat\tau(\,Z,\,Y\,)
  \;\;=\;\;
  -\;[\,Z,\,Y\,]_{\hat\frakm}\;\;=\;\;  -\;[\,Z,\,Y\,]_{\frakm}\;\oplus\;0\;=\;-\;\rho_*(\,Z\,)\,Y\;\oplus\;0
 $$
 for all $Z\,\in\,\hat\frakh^\perp$ and $Y\in \frakp$. Since $\frakh^\perp$ is a
 $1$--dimensional space, we see that $\hat\tau\,=\,\tau\,+\,\frac1{\sqrt{ |\,1\,+\,s\,|}}
 \rho_*(\,Z_0\,)\,\wedge\,V_0^\flat$ where $Z_0$ is a unit vector of
 $\frakh^\perp$, i.e., $B(\,Z_0,\,Z_0\,)\,=\,\pm\,1$, and $V_0\,:=\,
 \mp\, \frac1{\sqrt{|1\,+\,s|}}(\,Z_0,\,-\,s\,Z_0\,)$. Then, $\hat B(\,V_0,\,V_0\,)\,=\,\pm\,1$,
 i.e., $V_0$ is a unit vector. In case that $B(\,Z_0,\,Z_0\,)\,=\,1$, the calculation
 is equally valid also for $s\,=\, 0$, i.e., for the naturally reductive structure associated with $(\,G,\,H,\,B\,)$.
 This proves \eqref{eq:HAT_TAU}.
 For the curvature tensor of $\hat\nabla$, let $X,\,Y\,\in\,\frakp$.
 In order to determine $ [\,X,\,Y\,]_{\hat\frakk}$, we decompose 
 $\,[\,X,\,Y\,]_\frakk$ into the $\frakh^\perp$--part
 $[\,X,\,Y\,]_{\frakh^\perp}$ and its complement
 $[\,X,\,Y\,]_\frakh$. The decomposition of $\,[\,X,\,Y\,]_{\frakh^\perp}$ 
 into $\hat \frakk$-- and $\hat\frakm$--part is given by
 \begin{eqnarray*}
  \big(\;[\,X,\,Y\,]_{\frakh^\perp}
  \oplus 0\;\big)_{\hat \frakk}
  &=&
  \frac s{1\,+\,s}\big(\;[\,X,\,Y\,]_{\frakh^\perp}
  \;\oplus\;\hphantom{-\,s\,}[\,X,\,Y\,]_{\frakh^\perp}\,\;\big)
  \\
  \big(\;[\,X,\,Y\,]_{\frakh^\perp}
  \oplus 0\;\big)_{\hat \frakm}
  &=&
  \frac1{1\,+\,s}\big(\;[\,X,\,Y\,]_{\frakh^\perp}
  \;\oplus\;-\,s\,[\,X,\,Y\,]_{\frakh^\perp}\;\big)
 \end{eqnarray*}
 Therefore,
 \begin{eqnarray*}
  [\,X,\,Y\,]_{\hat\frakk}
  &=&
  [\,X,\,Y\,]_\frakh\,\oplus\,0 
  \;+\;\frac s{1\,+\,s}\big(\;[\,X,\,Y\,]_{\frakh^\perp}
  \;\oplus\;[\,X,\,Y\,]_{\frakh^\perp}\,\;\big)
  \end{eqnarray*}
  We note that the adjoint representation
  of $\hat\frakk$ on the second factor $0\,\oplus\,\hat\frakh^\perp$ of $\hat\frakm$ is trivial since $\frakh^\perp$ is
  an Abelian ideal. Therefore, in order to calculate the curvature tensor, we can keep the 
  canonical identification $\hat\frakk\,\cong\,\frakk$:
  \begin{eqnarray*}
  [\,X,\,Y\,]_{\hat\frakk}
  &=&
  [\,X,\,Y\,]_\frakh\;+\; \frac s{1\,+\,s}\;[\,X,\,Y\,]_{\frakh^\perp}
  \\
  &=&
  [\,X,\,Y\,]_\frakh
  \;+\;[\,X,\,Y\,]_{\frakh^\perp}
  \;-\;[\,X,\,Y\,]_{\frakh^\perp}
  \;+\; \frac s{1\,+\,s}\;[\,X,\,Y\,]_{\frakh^\perp}
  \\
   &=&
  [\,X,\,Y\,]_\frakk\;-\;\frac1{1\,+\,s}\,[\,X,\,Y\,]_{\frakh^\perp}
  \\
  &=&
  [\,X,\,Y\,]_\frakk\;-\;\frac1{|\,1\,+\,s\,|}\,\underbrace{B(\,[\,X,\,Y\,],\,Z_0\,)}_{=\,B(\,Y,\,[\,Z_0,\,X\,]\,)}\,Z_0
 \end{eqnarray*}
 It follows that 
 \begin{eqnarray*}
  \hat R(\,X,\,Y\,)\,
  &=&
  \bar R(\,X,\,Y\,)
  \;+\;\frac1{|\,1\,+\,s\,|}\,B(\,Y,\,[\,Z_0,\,X\,]\,)\,\rho_*(\,Z_0\,)
  \\
  &=&
  \bar R(\,X,\,Y\,)
  \;+\;\frac1{|\,1\,+\,s\,|}\langle\,Y,\,\rho_*(\,Z_0\,)\,X\,\rangle
  \,\rho_*(\,Z_0\,)
 \end{eqnarray*}
 for all horizontal vectors $X,\,Y$. Because $\hat R$ is a horizontal  tensor, this establishes \eqref{eq:HAT_R}.
 \end{proof}

\bigskip
 \noindent
 In the  notation of \cite{St1,St2,St3} the naturally reductive triple
 $(\,\hat g,\,\hat \frakk,\,\hat B_s\,)$ ($s\,\neq\,0$) is a $1$--dimensional
 $(\,\frakh^\perp,\,\hat B|_{\frakh^\perp\times\frakh^\perp}\,)$--extension of
 $(\,\frakg,\,\frakk,\,B\, )$.
  
  \begin{remark}\label{re:ON}
   By \cite[Eq.~(9.24)]{Bes}, the O'Neill tensor $A\,:\,\scrH\to \End^\mathrm{skew}\,T\, G\,/\,H$ 
   of the Riemannian submersion $f\,\colon\,G\,/\,H\,\longrightarrow \,G/K$ 
   satisfies $A_XY\,=\,\frac12 [X\,,\,\,Y]_\scrV\,$ for all horizontal vector fields $X$ and $Y$, 
   where we keep to the notation from the proof of Lemma \ref{le:Nor_SG}.
   Then, we have $A_XY\,=\,\frac12\,\hat\tau(\,X,\,Y\,)_\scrV$  for all horizontal vector fields: 
   Since this is a tensorial equation which is invariant under the action of $G$, it suffices 
   to check it at the origin of $G\,/\,H$ for the vector fields 
   $X^*,\,Y^*$ induced by $X,\,Y\in \frakp$ via the infinitesimal left action of $\frakg$. 
   The claim follows from the well--known relations $\hat\tau(\,X^*,\,Y^*\,)\,=\,-\,[\,X,\,Y\,]^*\,=\, [\,X^*,\,Y^*\,]$, cf. \cite{Zil1}. 
   Therefore,  the additional term $\frac1{\sqrt{|\,1\,+\,s\,|}}\,\rho_*(Z_0)\,\wedge\,V_0^\flat$ 
   from \eqref{eq:HAT_TAU} is twice the O'Neill tensor.
 \end{remark}
  
  \noindent
  As an example, we recall the construction of the Berger spaces $\hat \MM^n(\,\kappa,\,r\,)$, cf. also \cite[p.~587 ff.]{Zil1},
  \cite[9.81]{Bes} and \cite[Chpt. 9.2]{AFF}. For $\kappa\,>\,0$, let $\CP^n(\,\kappa\,)\,=\,\SU(\,n\,+\,1\,)\,/\,\U(\,n\,)\,$ 
  denote the complex projective space equipped with the Fubini--Study metric of constant holomorphic  sectional curvature 
  $\kappa$. The Hopf fibration $f\,:\,\S^{2n+1}(\,r_\kappa\,)\,\longrightarrow\,\CP^n(\,\kappa\,)$ defines a Riemannian submersion 
  from the round sphere whose radius satisfies $r_\kappa^2\,=\,\frac{4}{\kappa}$. Constant rescaling along the 
  vertical distribution gives a family of $\U(\,n\,+\,1\,)$--invariant metrics on  $\hat \MM^n(\,\kappa,\,r\,)\,:=\,\S^{2n\,+\,1}$ 
  parameterized by $r\,>\,0$  such that $f\,:\,\hat \MM^n(\,\kappa,\,r\,)\,\longrightarrow\,\CP^n(\,\kappa\,)$ is still a Riemannian 
  submersion with the same horizontal distribution  and whose fibers are circles of length $2\,\pi\,r$. 
  For $r\,=\,r_\kappa$, we recover the initial round metric.
  
   \bigskip
  \noindent
  For $\kappa\,<\,0$, let $\CH^n(\,\kappa\,)\,=\,\SU(\,n,\,1\,)\,/\,\U(\,n\,)$ denote the complex 
  hyperbolic space with the Fubini--Study metric of constant negative holomorphic sectional curvature $\kappa$. 
  By a similar construction, we obtain an $\U(\,n,\,1\,)$--invariant metric on 
  the non--compact counterpart $\hat \MM^n(\,\kappa,\,r\,)\,:=\,\SU(\,n,\,1\,)\,/\,\SU(\,n\,)$ 
  such that the canonical projection $f\,:\,\hat \MM^n(\,\kappa,\,r\,)\,\longrightarrow\,\CH^n(\,\kappa\,)$
  is a Riemannian submersion whose fibers are circles of length $2\,\pi\,r$.  
  Also let us consider the orthogonal splitting  $T \hat \MM^n(\,\kappa,\,r\,)\,=\,\scrH\oplus\scrV$ into the
  horizontal and the vertical subbundle. The canonical Hermitian structure  $J$ of $\MM^n(\,\kappa\,)\,\in\,\{\CP^n(\,\kappa\,),\,\CH^n(\,\kappa\,)\,\}$ 
  lifts to a linear map $J\,:\,\scrH\,\longrightarrow\,\scrH$. Then, $J\,\oplus\,0$ is the canonical contact structure
  of $\hat \MM^n(\,\kappa,\,r\,)$. 
  
  \bigskip
  \noindent
  For $\kappa\,=\,0$,  we consider the $(\,2\,n\,+\,1\,)$--dimensional Heisenberg group $\MM^n(\,0,\,\frac{1}{c^2}\,)$ 
  with a multiple $\frac{1}{c^2}$ of its canonical left--invariant metric of type H.
  Then, there exists a Riemannian submersion $\MM^n(\,0,\,\frac{1}{c^2}\,)\,\longrightarrow\, \C^n$ 
  and, similar as before, the canonical Hermitian structure $J$ of $\C^n$ induces a contact structure $J\oplus\,0$ on
  $T \hat \MM^n(\,0,\,\frac{1}{c^2}\,)\,=\,\scrH\oplus\scrV$, cf. also \cite[Chpt. 9.3]{AFF}.
  Furthermore, let $V_0$ be the Reeb vector field and denote  its metric dual by $V_0^\flat$.
    
  \bigskip
  \begin{proposition}\label{p:BS}
   \begin{enumerate}
    \item For $\kappa\,\neq\,0$  there exists a naturally reductive structure $\hat\nabla$ 
             on $\hat \MM^n(\,\kappa,\,r\,)$ whose torsion form and 
             curvature tensor are given by  
             \begin{equation}\label{eq:BS_TOR}
               \hat\tau
              \;\;=\;\;
              c\,\,J\,\wedge\,V_0^\flat
              \qquad\qquad
              \hat R
              \;\;=\;\;
              R_{\mathrm{FS}}(\,\kappa\,)\,+\,c^2\,J\,\otimes\,J
             \end{equation}
             with 
            \begin{equation}\label{eq:c2}
              c
              \;\;=\;\;\frac12\;r\,\kappa 
            \end{equation}
      where $R_{\mathrm{FS}}(\,\kappa\,)$ denotes the curvature tensor of $\MM^n(\,\kappa\,)\,\in\,\{\,\CP^n(\,\kappa\,),\,\CH^n(\,\kappa\,)\,\}$.
    \item On $\hat \MM^n(\,0,\,\frac{1}{c^2}\,)$ there exists  a naturally reductive structure $\bar \nabla$ such that  
             \begin{equation}\label{eq:HG_TORCURV}
              \hat\tau \;\;=\;\;
              c\,J\,\wedge\,e_{2n+1}
              \qquad\qquad
              \hat R
              \;\;=\;\;
              c^2\, J\,\otimes\,J
             \end{equation}
  \end{enumerate}
 \end{proposition}
 \begin{proof}
  For simplicity, we start with the case $\kappa>0$. The Fubini--Study metric of $\CP^n(\,\kappa\,)\,=\, G\,/\,K$ is induced by a suitable 
  positive definite invariant symmetric bilinear form $B$ on $\su(\,n\,+\,1\,)$. 
  In order to apply  Lemma \ref{le:Nor_SG}, let $G\,:=\,\SU(\,n\,+\,1\,)$, $
  K\,:=\,\U(\,n\,)\,\subset\, G$ and consider the normal subgroup $H\,:=\,\SU(\,n\,)\,\subset\, K$. 
  Then, $K\,/\,H\,=\,\U(\,1\,)$ and there exists an $(\,n\,+\,1\,)$--fold covering of $G\,\times\,K\,/\,H$ on $\hat G\,:=\,\U(\,n\,+\,1\,)$.
  Since the isotropy representation of $\CP^n(\,\kappa\,)$ is irreducible and the metric tensor 
  of $\hat \MM^n(\,\kappa,\,r\,)$ is both $\hat G$--invariant and bundle--like, 
  it belongs automatically to the family of naturally reductive metrics parameterized by $s\,>\,-\,1$ considered
  in the first part of Lemma \ref{le:Nor_SG}.  Furthermore, the center of $\u(n\,)$ is spanned by
  \begin{equation}\label{eq:Z}
   Z
   \;\;:=\;\;
   \frac1{n+1}\;\diag(\,-\,n\, i,\,i,\,i,\,\cdots,\,i\,)\,\in\su(\,n\,+\,1\,)
  \end{equation}
  and $J\,:=\,\rho_*(Z)\,$ is the standard complex structure of  $\CP^n$. Because of \eqref{eq:HAT_TAU} and \eqref{eq:HAT_R}
  we see that \eqref{eq:BS_TOR} holds for some $c\,>\,0$. It remains to verify \eqref{eq:c2}.  For this, note that both sides of this equation 
  are proportional to $r$: For the r.h.s. this is obvious. For the l.h.s. this follows from Remark \ref{re:scaling} in combination with \eqref{eq:HAT_TAU}.
  Therefore, it suffices to verify \eqref{eq:c2} for one specific value of the parameter $r$, 
  which we choose as $r\,:=\,r_\kappa$ with $r_\kappa^2\,:=\,\frac{4}{\kappa}$:
  It is well known that the curvature tensor of $\CP^n(\,\kappa\,)$ is characterized by
  \begin{equation}\label{eq:FS_CURV}
   R_{\mathrm{FS}}(\,\kappa\,)(\,Y,\,X,\,X,\,Y\,)
   \;\;=\;\;
   \frac{\kappa}4
   \;\left(\;\|X\|^2\|\,Y\|^2\;-\;\langle\,X,\,Y\,\rangle^2\; +\;3\;\langle\,J\,X,\,Y\,\rangle^2\;\right)
  \end{equation} 
  Furthermore, according to \cite[Eq.~(9.29\,c)]{Bes} the sectional curvatures of $\hat \MM^n(\,\kappa,\,r\,)$ 
  and $\CP^n(\,\kappa\,)$ are related by
  $$
   \hat\kappa(\,X,\,Y\,)\,=\,\kappa\,(\,f_*X,\,f_*Y\,)\,-\,3\,\|A_XY\|^2
  $$ for every orthonormal pair of horizontal vectors $X,\,Y$ where $A$ denotes the 
  O'Neill tensor of $f$.  Due to \eqref{eq:BS_TOR} in combination with Remark \ref{re:ON}	
  we have $\|A_XY\|^2\,=\,\frac{c^2}{4}\,\langle\, J\, X\,,\, Y\,\rangle^2$. 
  Inserting \eqref{eq:FS_CURV}, we conclude that
  \begin{equation}\label{eq:ONeill}
   \hat\kappa(\,X,\,Y\,)\;\;=\;\; \frac14\,\kappa \;\|X\|^2\|\,Y\|^2\; + \;\frac34\kappa\left(\;1\, - \,\frac{c^2}{\kappa}\;\right)\,\langle\, J\, X\,,\, Y\,\rangle^2
  \end{equation}
  Therefore, if the sectional curvature of $\hat \MM^n(\,\kappa,\,r\,)$ is constant equal to $\frac{\kappa}{4}$, then necessarily $c^2\,=\,\kappa$ holds.
  Consequently, $c^2\,=\,\kappa\,=\,\frac{1}{4}r_\kappa^2\,\kappa^2$, i.e., \eqref{eq:c2} holds for $r\,=\,r_\kappa$ and thus for all $r\,>\,0$. 
  
  \noindent
  Formula \eqref{eq:c2} can also be verified by a direct calculation: The Fubini--Study metric of constant holomorphic
  sectional curvature $\kappa$ is induced by $B\,:=\,-\,\frac2{\kappa}\,\trace_{\su(\,n\,+\,1\,)}$
  where $\trace_{\su(\,n\,+\,1\,)}(\,A,\,B\,)\,:=\,\trace(A\circ B)$.
  We have $B(\,Z,\,Z\,) \,=\,\frac2{\kappa}\frac{n}{n\,+\,1}$ and hence 
  $c^2\,=\, \frac{\kappa}{2}\frac{n\,+\,1}{n}\frac{1}{1\,+\,s}$ by \eqref{eq:HAT_TAU}. 
  Also recall that $Z$ defined in \eqref{eq:Z} can be interpreted at the same time as the 
  vertical vector $V\,:=\,(\,Z,\, -s\, Z\,)$. Then, $\|V\|^2\,=\,
  \frac1{\kappa}\,\frac{2\,n}{n\,+\,1}(\,1\,+\, s\,)$. The geodesic $\gamma(t)\,=\,
  \exp(\,t\,V)$ in the direction of $V$ is a simply closed curve of period
  $\frac{T}{2\,\pi}\,=\,\frac{n\,+\,1}{n}\frac{1}{1\,+\,s}$, and parameterizes a Hopf
  circle. Therefore, the length $L\,=\,T\,\|V\|$ of this circle satisfies $\frac{L^2}{(\,2\,\pi\,)^2}
  \,=\,  \frac2{\kappa}\frac{n\,+\,1}{n}\frac{1}{1\,+\,s}\,=\,\frac{4\,c^2}{\kappa^2}\,$. 
  Substituting $L\,=\,2\,\pi\,r$ again gives \eqref{eq:c2}.
  
 \noindent
 The case $\kappa\,<\,0$ is treated in the same way with the modifications
  indicated by Lemma~\ref{le:T_and_R} and ends up with the same formulas \eqref{eq:BS_TOR} 
  and \eqref{eq:c2}. For \eqref{eq:HG_TORCURV}, see \cite[Eq. (\,9.24\,)]{TV} and~\cite[p.~546]{St1} 
 \end{proof}

  \bigskip
  \noindent
  From \eqref{eq:ONeill}, we see that the sectional curvature of $\hat \MM^n(\,\kappa,\,r\,)$ 
  is positive if and only if $0\,<\,r^2\,\kappa\,<\,\frac{16}3$. Using
  $r_\kappa^2\,=\,\frac{4}{\kappa}$ we obtain the known estimate 
  $\frac{r^2}{r_\kappa^2}\,<\,\frac{4}3$ for the factor $\frac{r^2}{r_\kappa^2}$ by which we may rescale the 
  metric of the round sphere $\S^{2n+1}(\,r_\kappa\,)$
  in the vertical direction such that the sectional curvature  remains positive, cf. \cite{VZ,Zil1}.

 \bigskip
 \begin{example}(cf. \cite{Jen}, \cite[9.83]{Bes})\label{ex:Je}
  We let $G\,:=\,\Sp(\,n\,+\,1\,)$, $K\,:=\,\Sp(\,1\,)\times \Sp(\,n\,)$ and $H\,:=\, \{\id\}\times\Sp(\,n\,)$. 
  Then, $H$ is a normal subgroup of $K$ and $K/H\,=\,\Sp(\,1\,)$.  Let $B$ be the negative of the Killing form of $\sp(\,n\,+\,1\,)$.  
  For each $\kappa > 0$, we obtain a family of $\hat G\,:=\,\Sp(\,n\,+\,1\,)\,\times\, \Sp(\,1\,)$--invariant 
  naturally reductive metrics on the total space $\S^{4\,n\,+\,3}\,=\,\Sp(\,n\,+\,1\,)\,/\Sp(\,n\,)$ of the quaternionic Hopf bundle
  over the quaternionic projective space $\HP^n(\,\kappa\,)$ of constant quaternionic sectional curvature $\kappa$. 
  The canonical projection map is a Riemannian submersion and each fiber is isometric to a round $3$--sphere of radius $r$. 
  We get the same condition $\frac{r^2}{r_\kappa^2}\,<\,\frac{4}3$ for positiveness of the sectional curvature
  as for the Berger spheres, cf. \cite[Chpt. 2]{VZ}.
 \end{example}

\section{Generalized Vector Cross Products}
\label{se:GVCP}
 Let $\sigma:\,T\times T\times T \longrightarrow \R$ be an alternating $3$--form
 on a euclidean vector space $T$ with positive definite scalar product.
 For each $X\,\in\,T$, we denote by $\sigma_X\,\in\,\so(\,T\,)$ the skew--symmetric
 endomorphism of $T$ characterized by the following equality valid for all
 vectors $Y,\,Z\,\in\,T$:
 \begin{equation}\label{eq:sigmaX}
  \langle\, \,\sigma_XY,\,Z\,\rangle\;\;:=\;\;\sigma(\,X,\,Y,\,Z\,)
 \end{equation}

 \begin{definition}
 \label{de:VCP}
  An alternating $3$--form $\sigma\,\in\,\L^3T^*$ on a euclidean vector
  space $T$ is called a vector cross product in the sense of Gray if it
  satisfies for all $X,\,Y\,\in\,T$ the following identity:
  $$
   \langle\,\sigma_XY,\,\sigma_XY\,\rangle
   \;\;=\;\;
   \langle\,X\,\wedge\,Y,\,X\,\wedge\,Y\,\rangle
   \;\;:=\;\;
   \langle\,X,\,X\,)\,\langle\,Y,\,Y\,\rangle\;-\;\langle\,X,\,Y\,\rangle^2
  $$
 \end{definition}

 \noindent
 Every vector cross product $\sigma$ induces an orthogonal multiplication
 on $\R\oplus T$, hence the classification of orthogonal multiplications
 via $\Z_2$--graded Clifford modules tells us that there are only two examples
 up to isomorphism besides the trivial example $\sigma\,=\,0$ in dimension $1$
 \cite{BMS,FKMS}:
 \begin{itemize}
  \item In three dimensions there exists the well--known vector product
        on $T\,=\,\R^3$ which measures the directed area of two vectors.
        In terms of the standard basis $\{E_1,\,E_2,\,E_3\}$ of $\R^3$ the
        corresponding $3$--form $\sigma\,\in\,\L^3\R^{3*}$ equals the
        standard volume form $\det$:
        $$
         \sigma
         \;\;=\;\;
         dE_1\,\wedge\,dE_2\,\wedge\,dE_3
        $$
  \item The imaginary part of the product of octonions is a vector cross
        product on $T\,=\,\R^7$; its stabilizer in $\GL(\,\R^7\,)$ defines
        the exceptional Lie group $\G_2$ as a subgroup of $\SO(\,7\,)$. In
        terms of the standard basis $\{E_1,\,\ldots,\,E_7\}$ of $\R^7$ the
        corresponding $3$--form $\sigma\,\in\,\L^3\R^{7*}$ reads as:
        \begin{equation}\label{eq:sigma}
         \begin{array}{lcl}
          \sigma
          &\;=\;&
          \Big(\;dE_1\,\wedge\,dE_2\;+\;dE_3\,\wedge\,dE_4
          \;+\;dE_5\,\wedge\,dE_6\;\Big)\,\wedge\,dE_7
          \\[4pt]
          &&
          \;+\;\Re\;\Big(\;(\,dE_1\,+\,i\,dE_2\,)\,\wedge\,
          (\,dE_3\,+\,i\,dE_4\,)\,\wedge\,(\,dE_5\,+\,i\,dE_6\,)\;\Big)
         \end{array}
        \end{equation}
 \end{itemize}
 In order to understand the following definition it is helpful to note that
 $\sigma$ is a vector cross product if and only if $\sigma_X\,\in\,\so(\,T\,)$
 restricted to the orthogonal complement $\{\,X\,\}^\perp\,\subset\,T$ of a unit
 vector $X\,\in\,T$ is always an orthogonal complex structure in the sense
 $(\,\left.\sigma_X\right|_{\{X\}^\perp}\,)^2\,=\,-\id_{\{X\}^\perp}$. In
 particular, $\sigma_X$ and $\sigma_Y$ are conjugated under $\O(\,T\,)$ for
 all unit vectors $X,\,Y$.

 \begin{definition}(\cite[Definition~2.3]{BMS})
 \label{de:GVCP}
  An alternating $3$--form $\tau\,\in\,\L^3T^*$ on a euclidean vector space
  $T$ with positive definite scalar product $g$ is called a generalized vector
  cross product if $\tau\,\neq\,0$ and $\tau_X$ is conjugate to $\tau_Y$ in
  $\O(\,T\,)$ for all unit vectors $X,\,Y\,\in\,T$.
 \end{definition}

 \noindent
 In order to provide an example of a generalized vector cross product in
 dimension six, we consider the subspace $T\,\subset\,\R^7$ of vectors with
 vanishing seventh component $dE_7\,=\,0$ and the $3$--form $\tau\,\in\,
 \L^3T^*$ defined as the restriction of the $3$--form
 $\sigma\,\in\,\L^3\R^{7*}$ of Equation \eqref{eq:sigma} to $T$:
 \begin{equation}\label{eq:tau}
  \begin{array}{lcl}
   \tau
   &:=&
   \Re\;\Big(\;(\,dE_1\,+\,i\,dE_2\,)\,\wedge\,
   (\,dE_3\,+\,i\,dE_4\,)\,\wedge\,(\,dE_5\,+\,i\,dE_6\,)\;\Big)
   \\[4pt]
   &=&
   dE_1\wedge dE_3\wedge dE_5
   \,-\,dE_1\wedge dE_4\wedge dE_6\\
   &&\ \ \ \ \ \ \ \ \ \ \ \ \ \ \ \ \ \ \ \ \ \ \,\;-\,dE_2\wedge dE_3\wedge dE_6
   \,-\,dE_2\wedge dE_4\wedge dE_5
  \end{array}
 \end{equation}
 By definition, $\tau$ equals the real part of the standard complex volume
 form $\det$ on $\C^3\,\cong\,\R^6$. Because $\SU(\,3\,)$ acts transitively on
 the unit sphere of $\C^3$, the $\SU(\,3\,)$--invariance of the determinant
 $\det(AX,AY,AZ)\,=\,\det(X,Y,Z)$ for all $A\,\in\,\SU(\,3\,)$ and $X,\,Y,\,Z
 \,\in\,\C^3$ ensures that $\tau$ is a generalized vector cross product. In
 analogy to a vector cross product the eigenvalues of $\tau_X^2$ are $0$ 
 and $-1$  for a unit vector $X\,\in\,T$; however the eigenspace of $0$ 
 equals the $2$--dimensional subspace $\{\,X,\,iX\,\}\,\subset\,T$.
 It turns out that this is the only example of a generalized vector cross
 product besides the classical vector cross products of Definition
 \ref{de:VCP}:

 \begin{theorem}(\cite[Theorem~2.6]{BMS})
 \hfill\label{th:BMS}\break
  Every generalized vector cross product on an odd dimensional euclidean
  vector space $T$ with positive definite scalar product $g$ is a constant
  rescaling of a classical vector cross product and so $\dim\,T\,=\,3$ or
  $\dim\,T\,=\,7$. The assumption $\sigma\,\neq\,0$ excludes the trivial
  example in dimension $m\,=\,1$. Up to isomorphism and constant rescaling
  every generalized vector cross product on an even dimensional euclidean
  vector space $T$ is isomorphic to the generalized vector cross product
  $\tau$ of Equation \eqref{eq:tau} and consequently $\dim\,T\,=\,6$.
 \end{theorem}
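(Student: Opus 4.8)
The plan is to reduce a generalized vector cross product $\tau$ to one of the two model situations by analysing the common spectrum of the skew--symmetric endomorphisms $\tau_X$, and then to invoke the classification of orthogonal multiplications via $\Z_2$--graded Clifford modules that was already used after Definition \ref{de:VCP}. Throughout recall that $\tau_X X = 0$ by the antisymmetry of $\tau$, so $0$ always belongs to the spectrum of $\tau_X$, and that two skew--symmetric endomorphisms are conjugate in $\O(T)$ precisely when they have the same eigenvalues; thus the hypothesis of Definition \ref{de:GVCP} says that the eigenvalues of $\tau_X$ do not depend on the unit vector $X$. The first and central step is to show that $\tau_X$ has a \emph{single} nonzero eigenvalue magnitude, i.e.\ that after a global rescaling of $g$ (equivalently of $\tau$) one has $\tau_X^2 = -\id$ on $\Im\tau_X = (\Kern\tau_X)^\perp$ for every unit $X$, so that $\tau_X$ is an orthogonal complex structure on the complement of its kernel.

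To establish this I would exploit the pencil condition coming from the definition: for orthonormal $X,Y$ the vector $\cos\theta\,X + \sin\theta\,Y$ is again a unit vector, so $\cos\theta\,\tau_X + \sin\theta\,\tau_Y$ must have a $\theta$--independent characteristic polynomial. Comparing the coefficient of $\lambda^{\dim T-2}$ yields $\trace(\tau_X\tau_Y) = -\kappa\,\langle X,Y\rangle$ for a positive constant $\kappa$, and I normalise so that the largest eigenvalue magnitude $\lambda_{\max}$ equals $1$. If $v,w$ span a top eigenplane of $\tau_X$ (hence $v,w\perp X$), then $\tau(X,v,w)=\lambda_{\max}=g(\tau_v w,X)$ together with Cauchy--Schwarz—using that every $\tau_U$ has operator norm $\lambda_{\max}$—forces $\tau_v w = \lambda_{\max}X$, and symmetrically $\tau_w X = \lambda_{\max}v$, so that $\{X,v,w\}$ is an orthonormal triple on which $\tau$ restricts to $\lambda_{\max}$ times the standard volume form. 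Propagating this associative--triple structure along the pencils and comparing the remaining spectrum is the step I expect to be the main obstacle, since it is exactly here that one must rule out any strictly smaller nonzero eigenvalue and thereby extract the full rigidity of a generalized vector cross product.

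Granting a single eigenvalue and the normalisation $\tau_X^2 = -\id$ on $\Im\tau_X$, I would split according to the nullity $n_0 = \dim\Kern\tau_X$, which has the same parity as $\dim T$ because $\rank\tau_X$ is even. In the odd--dimensional case $n_0$ is odd; I claim $n_0=1$, forced by the very Clifford--module count that also pins down the dimension, larger odd nullities being incompatible with the resulting orthogonal multiplication. Then $\Kern\tau_X=\{X\}$ and $|\tau_X Y|^2 = |Y|^2-\langle X,Y\rangle^2$ for unit $X$, i.e.\ $\tau$ satisfies the norm identity of Definition \ref{de:VCP} and is, up to the rescaling, a classical vector cross product; the induced orthogonal multiplication on $\R\oplus T$ and its Clifford--module classification give $\dim T\in\{3,7\}$ exactly as recorded there, the excluded dimension $1$ corresponding to $\tau=0$.

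In the even--dimensional case $n_0$ is even and the model forces $n_0=2$: the extra kernel direction defines a map $X\mapsto JX$, and using $\tau_X(JX)=0$ together with the triple relations I would show that $J$ is a single $g$--orthogonal complex structure independent of $X$, whence $\Kern\tau_X = \{X,JX\}_{\R}$ and $|\tau_X Y|^2 = |Y|^2 - \langle X,Y\rangle^2 - \langle JX,Y\rangle^2$. Viewing $(T,J)$ as a Hermitian complex space, $\tau$ is then the real part of a parallel complex volume form, and the analogue over $\C$ of the orthogonal--multiplication/Clifford argument forces the complex dimension to equal $3$; hence $T\cong\C^3=\R^6$ and $\tau$ is the form \eqref{eq:tau}, up to isomorphism and the constant rescaling absorbed in the first step. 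Reconstructing this global $J$ from the varying kernels is, together with the single--eigenvalue step above, the part of the argument where the geometric content of Definition \ref{de:GVCP} is genuinely needed.
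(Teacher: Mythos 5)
The paper does not prove Theorem \ref{th:BMS}: it is quoted verbatim from \cite[Theorem~2.6]{BMS}, so there is no internal argument to compare yours against. Judged on its own terms, your proposal is a plausible outline of a strategy, but not a proof — the decisive assertions are stated, flagged by you as difficult, and then assumed.

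Concretely, three gaps remain. (i) The central claim that $\tau_X$ has a single nonzero eigenvalue magnitude is precisely the content that needs establishing, and you explicitly leave it open (``the step I expect to be the main obstacle''). Your pencil computation only yields $\trace(\tau_X\tau_Y)=0$ for orthonormal $X,Y$, and the Cauchy--Schwarz argument produces an associative triple $\{X,v,w\}$ inside a top eigenplane; neither says anything about a putative second, smaller nonzero eigenvalue, and ruling that out is where essentially all the work of \cite[Theorem~2.6]{BMS} lies. (ii) The determination of the nullity ($\dim\Kern\tau_X=1$ in odd dimensions, $=2$ in even dimensions) is asserted rather than derived: the appeal to ``the Clifford--module count'' is circular, because the orthogonal multiplication on $\R\oplus T$ only exists once you already know $\Kern\tau_X=\R X$; nothing you write excludes, say, a generalized vector cross product on $\R^5$ with three--dimensional kernels. (iii) In the even case the line $\Kern\tau_X\ominus\R X$ determines $JX$ only up to sign, and upgrading this family of lines to a single linear orthogonal complex structure $J$ on all of $T$ is again acknowledged but not carried out; the subsequent identification of $\tau$ with the real part of a complex volume form depends on it. Until (i)--(iii) are supplied the argument does not close; as it stands it is a correct inventory of what must be proved rather than a proof.
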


 \noindent
 Recall that the torsion form of a  naturally reductive 
 space $(\,M,\,g,\, \bar\nabla\,)$ is parallel with respect to $\bar\nabla$.
 Its algebraic type is therefore the same at every point and we can ask whether
 it is a generalized vector cross product:

 \begin{theorem}\label{th:GVCP}
  The torsion form $\tau$ of a simply connected naturally reductive space 
  $(\,M,\,g,\, \bar\nabla\,)$ is a generalized vector cross product if and only if
  one of the following cases holds:
  \begin{itemize}
  \item $\dim\,M\,=\,3$ and $\tau\neq 0$;
  \item $\dim\,M\,=\,6$ and there exists a strict nearly K\"ahler structure $J$ 
           such that $\bar\nabla$ is the canonical Hermitian connection: 
           the standard normal homogeneous spaces $\S^6_{\mathrm{round}}\,=\,\G_2/\SU(\,3\,)$, 
           $\bbF^3$, $\CP^3$ and $\S^3\,\times\,\S^3$.
  \item $\dim\,M\,=\,7$ and $(\,M,\,g,\,\bar\nabla)$ is the round sphere $\Spin(\,7\,)/\G_2$, or 
           there exists a proper nearly parallel $\G_2$--structure $\sigma$ whose underlying metric tensor is $g$, 
           and $\bar\nabla$ is the canonical $\G_2$--connection: 
           the standard normal homogeneous spaces $\S^7_{\mathrm{squashed}}$, $V_1$ and $V_3$.
\end{itemize}
 \end{theorem}
\noindent
 The proof of Theorem \ref{th:GVCP} in dimension three is trivial since 
 here every non--vanishing alternating $3$--form is a vector cross product. Its proof in dimensions six and seven 
 follows from the results presented in the next two sections.
 
\section{Six--Dimensional Homogeneous Nearly K\"ahler Manifolds}
\label{se:NR_NK}
 Following \cite{BMS,MNS}, an $\SU(\,3\,)$--structure on an oriented Riemannian manifold 
 $(\,M^6,\,g\,)$ is a reduction of the $\SO(\,6\,)$--principal bundle of positive orthonormal 
 frames to $\SU(\,3\,)$. Since the latter is the intersection $\SL(\,3,\,\C)\cap\SO(\,6\,)$ and hence the stabilizer in 
 $\SO(\,6\,)$ of  the standard complex volume form $(\, dE_1\,+\,i\,dE_2\,)\,\wedge\, (\,dE_3\,+\,i\,dE_4\,)\,
 \wedge\,(\,dE_5\,+\,i\,dE_6\,)$ of $\C^3$, such a reduction is defined by a triple $(\,J,\,\Psi^+,\,\Psi^-\,)$ 
 where $J$ is a Hermitian structure of $(\,M,\,g\,)$, compatible with the given orientation,
 and $\Psi^+$ is the real part of a complex volume form  $\Psi\,:=\, \Psi^+\,+\,i\,\Psi^-$
 of constant length. Then, there exists a real number $c\, >\, 0$ such that 
 locally $\Psi\,=\,c\,(\, dE_1\,+\,i\,dE_2\,)\,\wedge\, (\,dE_3\,+\,i\,dE_4\,)\,\wedge\,(\,dE_5\,+\,i\,dE_6\,)$
 in relation to a suitable positive orthonormal frame field $E_1,\,\ldots,\,E_6$ that fulfills the condition
 $E_2\,=\,JE_1$, $E_4\,=\,JE_3$ and $E_6\,=\,JE_5$, where we stick to the notation 
 $dE_i$ for the dual frame $E_i^\flat\,=\,g(\,E_i,\,\cdot\,)$. 
 Additionally, $\Psi^+$ and  $\Psi^-$ are both $3$--forms of type $(\,3,\,0\,)\,+\,(\,0,\,3\,)$
 of the same constant length related by:
 \begin{eqnarray}\label{eq:Psi_J}
  \Psi^-
   &\;\;=\;\;&
  -\, \Psi^+\circ J \times \id \times \id
 \end{eqnarray}
 Therefore, an $\SU(\,3\,)$--structure can equally be
 defined by a tuple $(\,J,\,\Psi^+\,) $ where $\Psi^+$ is 
 a $(\,3,\,0\,)\,+\,(\,0,\,3\,)$--form of constant length. 
 In particular, $\Psi^+$ is a multiple of the right hand side of  \eqref{eq:tau} 
 and therefore a generalized vector cross product. Then, one can show hat the following 
 constant type equation holds (cf. \cite[Definition 2.1]{BM}, \cite[Theorem 5.2]{G4}):
 \begin{equation}\label{eq:CT}
  \langle\;\Psi^+(\,X,\,Y\,)\;,\;\Psi^+(\,X,\,Y\,)\;\rangle
  \;\;=\;\;
  c^2\,\Big(\;\|\,X\,\|^2\,\;\|\,Y\,\|^2\;-\;\langle\,X,\,Y\,\rangle^2
  \;-\;\langle\,J\,X,\,Y\,\rangle^2\;\Big)
 \end{equation}
 simply by checking it for $X\,:=\,E_1$ and  $Y\,:=\,E_k$ with $k\,=\,1,\,\ldots,\,6$.
 Whereas $\SU(\,n\,)$--structures are defined analogously for arbitrary $n$, 
 it is important to note that \eqref{eq:CT} is reserved to dimension six, 
 cf. also \cite[Chpt. 5.3, Remark 5]{BFGK}.
 It is also straightforward to conclude from the local expression above for $\Psi$ that
 \begin{eqnarray}
  \Psi^-
  &\;\;=\;\;&
  *\,\Psi^+
  \label{eq:Psi_*}
 \end{eqnarray}
 Here, $*$ denotes the Hodge dual on differential forms of $(\,M,\,g\,)$. Thus, $\Psi^+\wedge\Psi^-$ 
 is a non--vanishing $6$--form of constant length and hence a multiple of the Riemannian volume form 
 $\det(\,M,\,g\,)$. We will call an $\SU(\,3\,)$--structure normalized
 if $\frac14\,\Psi^+\wedge\Psi^-$ is equal to $\det(\,M,\,g\,)$, cf. \cite[Chpt. 3]{BMS}. 
 Equivalently \eqref{eq:CT} holds with $c\,=\,1$. 
 Because of \eqref{eq:Psi_J}, \eqref{eq:Psi_*} the $\SU(\,3\,)$--reduction is in fact 
 completely described by the generalized vector cross product $\Psi^+$ alone:

 \begin{lemma}
 \label{le:SU3_STRUC}
  Let $(\,M,\,g\,)$ be an oriented $6$--dimensional Riemannian manifold. Assigning
  to a triple $(\,J,\,\Psi^+,\,\Psi^-\,)$ the $3$--form $\tau\,:=\,\Psi^+$
  defines a 1--1 correspondence between normalized $\SU(\,3 \,)$--structures of $(\,M,\,g\,)$ 
  and $3$--forms $\tau$ which can be represented locally according to \eqref{eq:tau} using
  a suitable positive orthonormal frame field $\{E_1,\,\ldots,\, E_6\}$.
 \end{lemma}

 \begin{proof}
 The fact that  our assignment is well defined was already mentioned before. 
 We claim that it is also injective, i.e., that $(\,J,\,\Psi^+,\,\Psi^-\,)$ is uniquely determined by $\,\Psi^+$:
 We already know that $\Psi^+$ together with the given orientation determines $\Psi^-$ by \eqref{eq:Psi_*}. 
 We claim further that $J$ is determined by the tuple $(\,\Psi^+,\,\Psi^-\,)$. For this, note 
 that the map $\Lambda^2TM\,\to TM$ defined by $X\wedge Y\,\mapsto \Psi^-_X\,Y$ 
 is $\SU(\,3\,)\,=\,\SU(\,T_pM,\,J\,)$--equivariant and non--trivial since $\Psi^-\,\neq\,0$. 
 Schur's Lemma implies that this map is surjective because $\SU(\,3\,)$ acts irreducibly on $T_pM$.
 Hence, $T_pM$ is spanned by tangent vectors $Y$ such that  $Y\,=\, \Psi^-_{Z_1}Z_2$
 for certain $Z_1$ and $Z_2$. Thus,
 $$
  \langle\;JX,\;Y\;\rangle
  \;\;=\;\;
  \Psi^-(\;Z_1,\;Z_2,\;JX\;)
  \;\;=\;\;
  \Psi^-(\;JX,\;Z_1,\;Z_2\;)
  \;\;=\;\;
  \Psi^+(\;X,\;Z_1,\;Z_2\;)
 $$
 according to \eqref{eq:Psi_J} exploiting that $J^2\,=\,-\,\id$. 
 In particular, $J$ is uniquely determined by $\Psi^+$ and $\Psi^-$. 
 It remains to prove that our assignment is surjective: Let $\tau$ be a $3$--form 
 given locally by~\eqref{eq:tau} with respect to a positive orthonormal frame field $\{\,E_1,\,\ldots,\, E_6\,\}$. 
 Set $\Psi^+\,:=\,\tau$, let $J$ be the standard Hermitian
 structure whose K\"ahler form is $dE_1\,\wedge\,dE_2\,+\,dE_3\,\wedge\,dE_4\,+\,dE_5\,\wedge\,dE_6$, and define
 $\Psi^-$ by \eqref{eq:Psi_J}. This way we obtain locally normalized $\SU(\,3\,)$--structures  $(\,J,\,\Psi^+,\,\Psi^-\,)$ with $\Psi^+\,=\,\tau$. 
 Since these structures are uniquely determined by $\tau$, our construction automatically glues resulting in a globally 
 defined normalized $\SU(\,3\,)$--structure such that $\Psi^+\,=\,\tau$. 
 \end{proof}

 \bigskip\noindent
 Examples of $\SU(\,3\,)$--structures are given by $6$--dimensional strict nearly
 K\"ahler manifolds \cite{BM,But2,FH,G4,MNS,Nag}:

 \begin{definition}\label{de:NK}
  An almost Hermitian manifold $(\,M^{2n},\,g,\,J\,)$ with Levi--Civita
  connection $\nabla$ is called nearly K\"ahler if $\nabla J$ is
  skew in the sense
  \begin{equation}
   (\,\nabla_XJ\,)\,Y
   \;\;=\;\;
   -\;(\,\nabla_YJ\,)\,X
  \end{equation}
 A nearly K\"ahler manifold  is called strict provided $\nabla_X J\,\neq\,0$ for each $X\,\neq\, 0$.
 \end{definition}

 \noindent
 Every $6$--dimensional nearly K\"ahler manifold $(\,M,\,g,\,J\,)$ is K\"ahler  (i.e., $\nabla J\,=\,0$) 
 or strict nearly K\"ahler  \cite[Proposition 2.1]{Nag}.  Differentiating the relation $J^2\,=\,-\,\id$ with respect to $\nabla$ one obtains that 
 $\nabla_XJ\circ J\,=\,-J\circ \nabla_XJ$. So we conclude that $\Psi^+(\,X,\,Y,\,Z\,)\,:=\,\langle\, (\,\nabla_X J\,) J\, Y\,,\,Z\,\rangle$  
 is a $3$--form of type $(\,3,\,0\,)\,+\,(\,0,\,3\,)$. This form is also of constant length since it is parallel with respect to the 
 canonical Hermitian connection $\nabla^c$ defined below, cf.~\cite[Chpt. 2]{BM}. 
 Because of \eqref{eq:Psi_J} the corresponding $\SU(\,3\,)$--structure is $(\,J,\,\nabla J\circ J,\,\nabla J\,)$. 
 One can show that every $6$--dimensional 
 strict nearly K\"ahler manifold is an Einstein space of positive scalar 
 curvature $\scal\,>\,0$ and that the underlying $\SU(\,3\,)$--structure is normalized for $\scal\,=\,30$, 
 cf.  \cite[Chpt. 5.3]{BFGK}, \cite[Theorem 5.2]{G4} and \cite[Chpt. 2]{Nag}.

 \bigskip
 \noindent
 For every almost Hermitian manifold $(\,M,\,g,\,J\,)$, the
 canonical Hermitian connection is given by 
 \begin{equation}\label{eq:IHC}
  \nabla^c
  \;\;:=\;\;
  \nabla\;+\;\frac12\;\nabla J\circ J
 \end{equation}
 Alternatively we can write $\nabla^c\,=\,\nabla\,-\,\frac12\,J\circ \nabla J$, cf.~\cite[Definition~1.1]{BM}.	
 It is easy to show that $\nabla^c$ is a metric connection such that $\nabla^c J\,=\,0$. Also note
 that  the canonical Hermitian connections  of $(\,M,\,g,\,J\,)$ and $(\,M,\,g,\,-\,J\,)$ are the same.

 \begin{definition}
  \label{de:NR_NK}
  We will call a $6$--dimensional compact strict nearly K\"ahler manifold $(\,M,\,g,\,J\,)$ homogeneous
  if $\nabla^c$ defined by \eqref{eq:IHC} is a naturally reductive structure of $(\,M,\,g\,)$.
 \end{definition}
  
 \noindent 
 The condition that $(\,M,\,g,\,J\,)$  is strict excludes the Hermitian symmetric spaces from this definition.

 \begin{remark} \label{re:NR_NK}
 In the situation of Definition \ref{de:NR_NK}, the transvection group $\Tr(\,\nabla^c\,)$
 acts transitively by holomorphic isometries on $(\,M,\,g,\,J\,)$, which is the standard definition 
 of a homogeneous nearly K\"ahler manifold. Conversely, by \cite[Theorème 1.1]{But1} every
 six--dimensional homogeneous strict nearly K\"ahler manifold in this classical sense is a 
 $3$--symmetric naturally  reductive space equipped with its canonical nearly K\"ahler 
 structure and hence complies with Definition \ref{de:NR_NK}, cf. \cite[Chpt. 1]{But2}.
 Therefore, Definition \ref{de:NR_NK}  provides the correct "definition'' of  homogeneous 
 strict nearly K\"ahler manifolds in dimension six.
 \end{remark}
 
\noindent
 The following result establishes the classification of $6$--dimensional simply connected 
 naturally reductive spaces  whose torsion form is a generalized vector cross product, 
 as claimed by Theorem \ref{th:GVCP}:

 \begin{proposition}
 \label{p:NK}
  The assignment $(\,M,\,g,\,J\,)\, \longrightarrow \,(\,M,\,g,\,\nabla^c\,:=\,\nabla\,+\,\frac12\,
 \nabla J\circ J\,)$ defines a 1--1 correspondence between:
  \begin{itemize}
   \item $6$--dimensional homogeneous strict nearly K\"ahler
            manifolds $(\,M,\,g,\,J\,)$;
   \item $6$--dimensional oriented naturally reductive spaces
           $(\,M,\,g,\,\bar\nabla\,)$ whose torsion form is a generalized vector cross product.
  \end{itemize}
  The four simply connected examples are mentioned at the beginning of Section \ref{se:ingredients}.
 \end{proposition}

  \begin{proof}
   Let $(\,M,\,g,\,J\,)$ be a  $6$--dimensional homogeneous 
   strict nearly K\"ahler manifold. Then, $\nabla^c$  defined in \eqref{eq:IHC} is a
   naturally reductive structure according to Remark \ref{re:NR_NK}. Therefore, 
   $(\,M,\,g,\,\nabla^c\,)$ is a naturally reductive space which is canonically 
   orientated by $J$.  Also the torsion form $\nabla J\circ J$ is a non--vanishing 
   $(3,0)\,+\,(0,3)$ form of  constant length and hence a generalized vector cross 
   product. Therefore, our assignment is well defined. It is also injective according to 
   Lemma  \ref{le:SU3_STRUC}, and it remains to show that it is surjective: Let 
   $(\,M,\,g\,,\bar\nabla\,)$ be an oriented $6$--dimensional  naturally reductive space 
   with torsion form $\tau$. Suppose also that there are locally positive orthonormal 
   frame fields $\{\,E_1,\,\ldots,\, E_6\,\}$ such that  $\frac1{c}\,\tau$ is given by 
   Equation~\eqref{eq:tau} for a constant $c\,\neq\,0$. As a result, there  exists a 
   normalized $\SU(\,3\,)$--structure $(\,J,\,\Psi^+,\,\Psi^-\,)$ of $(\,M,\,g\,)$ such that 
   $\tau\,=\,c\,\Psi^+$ according  to Lemma \ref{le:SU3_STRUC}. Moreover, 
   $\bar\nabla\tau\,=\,0$ implies that $\bar\nabla$ is an $\SU(\,3\,)$--connection
   because of the uniqueness assertion of Lemma \ref{le:SU3_STRUC}.
   In particular $\bar\nabla J\,=\,0$. Thus, we have $\nabla_XJ\,=\,-\frac12\, \tau_X\,\star\,J$,
   where $\star$ denotes the infinitesimal left action of skew--symmetric 
   endomorphisms on tensors. Furthermore, we claim that $\tau_X\,\star\,J$ 
   is given by $-\,2\,X\,\lrcorner\,*\tau$, where we 
   implicitly identify $J$ with its K\"ahler form $\omega(\,X,\,Y\,)\,:=\,\langle\,J\,X,\,Y\,\rangle$:
   Since the equation in question is linear in $\tau$, we can assume that $c\,=\,1$.
   Because this equation is also invariant under $\SU(\,3\,)$ and the latter group acts
   transitively on the unit sphere $\S^6$, it is sufficient to consider $X\,:=\,E_1$. From:
   \begin{eqnarray*}
    \tau_X
    &\;\;=\;\;&
    E_1\,\lrcorner\,\Psi^+
    \;\;\stackrel{(\ref{eq:tau})}=\;\;
    dE_3\wedge dE_5
    \,-\,dE_4\wedge dE_6
    \\
    X\,\lrcorner\,*\,\tau
    &\;\;\stackrel{(\ref{eq:Psi_*})}{=}\;\;&
    E_1\,\lrcorner\,\Psi^-
    \;\;\stackrel{\eqref{eq:Psi_J}}{=}\;\;
    \,dE_3\wedge dE_6
    \,+\,dE_4\wedge dE_5
   \end{eqnarray*}
   we obtain  by means of the formula $A\star\omega\,=\,\sum_\mu (\,AE_\mu\,)^\flat\,\wedge
   \,E_\mu\,\lrcorner\,\omega$ (valid for every skew--symmetric endomorphism 
  $A$ and every alternating form $\omega$)
   \begin{eqnarray*}
    \tau_X \star \omega
    &\;\;=\;\;&
    \sum_\mu
    E_\mu\,\lrcorner\,E_1\,\lrcorner\,\Psi^+\,\wedge\,E_\mu\,\lrcorner\,\omega \\
    &\;\;=\;\;&
    -\;2\,(\; dE_3\wedge dE_6
     \,+\,dE_4\wedge dE_5\;) \;\;=\;\;
    -\;2\, X\,\lrcorner\,*\,\tau
   \end{eqnarray*}
   Therefore, $\langle\,(\nabla_XJ)Y,\,Z\,\rangle\,=\,-\frac12\,\langle\,(\tau_X\star J)Y,\,Z\,\rangle
   \,=\,*\,\tau(\,X,\,Y,\,Z\,)$, i.e., $\nabla J\,=\,*\,\tau$. 
   In particular $(\,M,\,g,\,J\,)$ is a strict nearly K\"ahler manifold
   since $*\tau$ is a non--degenerate alternating form.
   We also have $*\,\tau(\,X,\,J\,Y,\,Z\,)
   \,=\,\tau(\,X,\,Y,\,Z)$ because of \eqref{eq:Psi_J} and \eqref{eq:Psi_*}. 
   It follows that 
   $$
    \tau(\,X,\,Y,\,Z)\;\;=\;\;*\,\tau(\,X,\,J\,Y,\,Z\,)\;\;
    =\;\;\langle\,\nabla_XJ (\,J\, Y\,),\,Z\,\rangle
   $$
   We conclude that $\tau\,=\,\nabla J\circ J$ and hence $\bar\nabla\,=\,\nabla^c$.
   Thus, $(\,M,\,g\,,J\,)$ is a homogeneous strict nearly K\"ahler manifold. 
 \end{proof}
\subsection{Holomorphic Sectional Curvature of Nearly K\"ahler Manifolds}
\label{se:HSC}
 Let $(\,M,\,g,\,J\,)$ be a nearly K\"ahler manifold. 
 The holomorphic sectional curvature  $ \rmH$ assigns to each unit vector $X$ 
 the sectional curvature $\kappa(\,X,\,J\,X\,)$ of the 
 two--plane $\{X,\,J\,X\}_\R\,\subset\,T_pM$.

 \begin{proposition}
 \label{p:CHSC}
  For a $6$--dimensional strict nearly K\"ahler manifold $(\,M,\,g,\,J\,)$
  the following statements are equivalent:
  \begin{enumerate}[(a)]
   \item $(\,M,\,g\,)$ is a round sphere of constant curvature $\rmH\,>\,0$.
   \item $\rmH$  is a constant function on the unit sphere bundle $\S(\,TM\,)$.
   \item $\K_0(X)JX$ is a multiple of $JX$ for all $p\,\in\,M$ and $X\,\in\,T_pM$.
   \item $\langle\, \K_0(X)\,U,\,\Psi^+_X\, U\,\rangle\,=\,0$
           holds for all $p\,\in\,M$, $X\,\in\,T_pM$ and $U\,\in\,\{\,X,\,JX\,\}^\perp$.
  \end{enumerate}
  Here, $\K_0$ is the symmetrized curvature tensor defined by \eqref{eq:SCD} and 
  $\Psi^+\,:=\,\nabla J\,\circ\,J$.
 \end{proposition}

 \begin{proof}
  $(a)\Longrightarrow(b)$   is obvious. 
  For $(b)\Longrightarrow(a)$, see \cite[Lemma 5.1]{Tan} (or use
   \cite[Proposition 3.4]{G2} in combination with \eqref{eq:CT}).)
The equivalence of $(b)$ and $(c)$ is shown 
 in \cite[Theorem~3.4]{Tan}. Also the implication $(a)\Longrightarrow (d)$ is obvious.
 It remains to show that $(d)$ implies $(b)$: For this we can assume that $\scal\,=\,30$ 
 and hence the nearly K\"ahler constant from \eqref{eq:CT} satisfies $c\,=\,1$. 
 For every tangent unit vector $X$, set $T_1\,:=\,T_1(\,X\,)\,:=\,\{X,JX\}^\perp$. Then, $T_1$ is a 
 Hermitian subspace of $(\,M,\,g,\,J\,)$. In addition, $I_X\,:=\,\,\Psi^+_X$ defines a second Hermitian structure 
 on $T_1$ by virtue of the constant type equation \eqref{eq:CT}. 
 Since $\K_0(\,X\,)$ is a symmetric endomorphism whereas $I_X$ is skew,
 $\langle\, [\,\K_0(\,X\,),\,I_X\,]\,U,\,U\,\rangle\,
 =\, 2\, \langle\, \K_0(X)\,U,\,I_X\, U\,\rangle$.  Thus, assertion $(d)$ implies that  the projection of 
 $\K_0(\,X\,)$ to $T_1$ commutes with $I_X$:  
 $\langle\, [\,\K_0(\,X\,),\,I_X\,]\,U,\,U\,\rangle\,
 =\,0$. We also have $J \circ I_X\,=\,-\,I_{J\,X}\,=\,-I_X \circ J$ 
 because $\Psi^+$ is of type $(3,0)\,+\,(0,3)$. 
 Therefore, $T_1$ is on the one hand an irreducible module over 
 the  quaternionic numbers $\H$. On the other hand, $ \langle\;\K_0(\;X\;)\,J\,U,\;J\,U\;\rangle \,
 =\,  \langle\;\K_0(\;J\,X\;)\;U,\;U\;\rangle$ by a known curvature identity for nearly K\"ahler manifolds, cf. \cite[Corollary~2.2]{G3}. As a result,
 \begin{eqnarray*}
  \langle\;\K_0(\;X\;)\,J\,U,\;J\,U\;\rangle 
  &\;\;=\;\;&
 \langle\;\K_0(\;J\,X\;)\,U,\;U\;\rangle\\
  &\;\;=\;\;&
  \langle\;\K_0(\;J\,X\;)\,I_{J\,X}U,\;I_{J\,X}\,U\;\rangle
\\
   &\;\;=\;\;&
 \langle\;\K_0(\;J\,X\;)\,J\,I_XU,\;J\,I_X\,U\;\rangle
\\
   &\;\;=\;\;&
   \langle\,\K_0(\;X\;)\;I_XU,\;I_XU\;\rangle
  \\
  & \;\;=\;\;&
   \langle\,\K_0(\,X\,)\;U,\;U\;\rangle
 \end{eqnarray*}
 for every $U\,\in\,T_1$. Thus, also $J$ commutes with $\K_0(\,X\,)$ projected to $T_1$. 
 Therefore, $\K_0(\,X\,)$ projected to $T_1$ is an $\H$--homomorphism
 and hence a multiple of the identity due to
 Schur's Lemma. Accordingly, there exists a real constant $\mu\,=\,\mu(\,X\,)$ such
 that $\langle\, R(\,U,\,X,\,X\,),\,U\,\rangle\,=\,\mu(\,X\,)$ for every unit vector
 $U\,\in\,T_1(\,X\,)$. We claim that $\mu$ does not depend on $X$:
 Because $J$ is orthogonal, "$U\,\in\, T_1(\,X\,)$" is logically
 equivalent to "$X\,\in\, T_1(\,U\,)$"; in consequence, we find
 $$
  \mu(\,X\,)\;
  \;\;=\;\;
  \mu(\,U\,)\;
 $$
 for all unit vectors $X,\,U$ such that $X\in T_1(\,U\,)$. Furthermore, given unit vectors $X$ and $Y$
 different from zero, the dimension of $\mathrm{span}_\R\{\,X,\,Y,\,JX,
 \,JY\,\}$ is at most four, and so there exists a unit vector $U$ different from zero
 with $\{\,X,\,Y\,\}\, \subset\, T_1(\,U\,)$. 
 Therefore, $\mu(\,X\,) \,=\, \mu(\,U\,) \,=\, \mu(\,Y\,)$
 i.e., $\mu$ does actually not depend on $X$. Calculating the Ricci tensor, we see that 
 $$
  \Ric(\;X,\;X\;)
  \;\;=\;\;
  4\,\mu\;+\;H(\,X\,)
 $$
 On the other hand, $(\,M,\,g\,)$ is Einstein, hence $\Ric(X,X)\,=\,5\,$. This implies
 that $H(\,X\,)\,\equiv\,5\,-\,4\,\mu$ and so $H\,\equiv\,\mu\,\equiv\,1$.
 \end{proof}
\section{Naturally Reductive Nearly Parallel $\G_2$--Spaces}
\label{se:NR_NP}
 Following \cite{FKMS} and \cite[Chpt. 5.2]{BG1}, a $\G_2$--structure on an oriented Riemannian
 manifold $(\,M^7,\,g\,)$ is a reduction of the $\SO(\,7\,)$--principal 
 bundle of positive orthonormal frames to the subgroup $\G_2\,\subset\,\SO(\,7\,)$.
 This reduction is described by a $3$--form $\sigma$ which locally admits
 a coordinate representation \eqref{eq:sigma} with reference to a positive
 orthonormal frame field $\{E_1,\,\ldots,\,E_7\}$. In particular, $\sigma_p$ is a 
 vector cross product for each $p\in M$. Then, $M$ is equipped with a 
 canonical spin structure since $\G_2$ is simply
 connected. Moreover, the spin--representation of $\G_2$ fixes a non--trivial
 vector. Hence, there exists a distinguished spinor of unit length, cf.~\cite[Chpt. 2]{FKMS}.

 \begin{remark}\label{re:np}
  The group of automorphisms $\Aut(\,M,\,\sigma\,)\,:=\,\{\,f\,
  \colon\,M \widetilde \longrightarrow M,\, f^*\sigma\,=\,\sigma\,\}$ 
  is a subgroup of $\Isom(\,M,\,g\,)$ since $\G_2\,\subset\,\O(\,7\,)$.
  However, we do not automatically obtain a reduction of the 
  holonomy group of $(\,M,\,g\,)$ since $\sigma$ is not necessarily 
  parallel with respect to the Levi--Civita connection.
 \end{remark}

 \noindent
 The  analogue of a $6$--dimensional strict nearly K\"ahler manifold
 is a nearly parallel $\G_2$--space:

 \begin{definition}\label{de:NP}
  A $\G_2$--structure $\sigma$ is called nearly parallel
  if there exists a constant $\tau_\circ\,\neq\,0$ such that for all $X\,\in\,T_pM$:
  \begin{equation}\label{eq:NP}
   \nabla_X\sigma
   \;\;=\;\;
   \frac{\tau_\circ}4\;X\,\lrcorner\,*\sigma
  \end{equation}
 \end{definition}
 \noindent
 For other equivalent definitions of a nearly parallel $\G_2$--structure,
 cf.~\cite[Proposition~2.3]{AlS}. Then, the characteristic spinor
 of the $\G_2$--reduction is a Killing spinor for the Killing number $-\,\frac{\tau_0}{8}$.
 Therefore, every nearly parallel $\G_2$--space is an Einstein manifold with scalar curvature
 $\scal\,=\,\frac{21}{8}\,\tau_\circ^2$, cf. \cite[Chpt. 1.5]{BFGK} and \cite[Chpt. 3]{FKMS}. In the standard
 normalization $\scal\, = \, \frac{21}{8}$ we have $\tau_\circ^2\, =\, 1$.
 In particular, every complete nearly parallel $\G_2$--space is compact with
 finite fundamental group. It is also known that the holonomy group of the Levi Civita connection 
 is generic, cf.~\cite[p.~11]{FKMS}. Therefore, $(\,M,\,g\,)$ is irreducible; also it is a symmetric space if
 and only if it is isometric to the round $7$--sphere. As a result, Theorem~\ref{th:OR} implies 
 that a naturally reductive structure $\bar \nabla$ on a compact nearly parallel 
 $\G_2$--space is unique unless the space is of constant sectional curvature. It turns out that
 the additional assumption $\bar\nabla\sigma\,=\,0$ is even more restrictive.
 Recall that the canonical connection of a nearly parallel $\G_2$--space is defined by 
 \begin{equation}\label{eq:IG2C}
  \nabla^c\,:=\,\nabla\,-\,
  \frac{\tau_\circ}{12}\,\sigma
 \end{equation}
 where $\tau_\circ$ is the nearly parallel constant from~\eqref{eq:NP}.
 It is known that $\nabla^c\sigma \,=\,0$, cf.~\cite{AlS}. 
 Therefore, $\nabla^c$ plays the same role as the canonical 
 connection \eqref{eq:IHC} of an almost Hermitian manifold. 

 \begin{theorem}[\cite{FI}]
 \label{th:FI}
  Let both a nearly parallel $\G_2$--structure $\sigma$ and a naturally reductive
  structure $\bar\nabla$ on a Riemannian manifold $(\,M,\,g\,)$ be given. If
  $\bar\nabla\sigma\,=\,0$, then $\bar\nabla\,=\,\nabla^c$.
 \end{theorem}
 
 \begin{proof}
 According to
 \cite[Corollary~4.3]{FI} there exists at most one affine connection 
 $\bar\nabla$ with skew torsion such that $\bar\nabla\sigma\,=\,0$.
 The canonical connection $\nabla^c$ has these properties, too, and so 
 $\bar\nabla\,=\,\nabla^c$.
 \end{proof}
  
 \bigskip
 \noindent
 The following is the analogue of Definition \ref{de:NR_NK}. 

 \begin{definition}
 \label{de:NR_NP}
  We will call a compact nearly parallel $\G_2$--space $(\,M,\,g,\,\sigma\,)$
  naturally reductive if the canonical  connection $\nabla^c$  defined in \eqref{eq:IG2C} is a 
  naturally reductive structure of $(\,M,\,g\,)$. If $\nabla^c$  is actually (standard) normal homogeneous, 
  then $(\,M,\,g,\,\sigma\,)$ will be called a (standard) normal homogeneous nearly parallel $\G_2$--space.
 \end{definition}

\noindent
 If $(\,M,\,g,\,\sigma\,)$ is naturally reductive, then $\Tr(\,\nabla^c\,)$ 
 is contained in $\Aut(\,M,\,\sigma\,)$ and acts transitively on $M$, 
 i.e $(\,M,\,g,\,\sigma\,)$ is a homogeneous nearly parallel $\G_2$--space  
 in the sense of~\cite{FKMS}. However, the classification of these spaces 
 given in \cite{FKMS} does not follow such a simple pattern as that of 
 six--dimensional homogeneous strict nearly K\"ahler manifolds: First of all, 
 it is essential to assume that the space is simply connected since the classification
 via Killing spinors relies on this fact. Moreover,
 in Section \ref{se:CLASS_G2}, we will see that there are
 simply connected homogeneous nearly parallel $\G_2$--spaces, 
 the metric tensors of which are naturally reductive, but whose nearly 
 parallel $\G_2$--structures are not naturally reductive in the sense of
 Definition  \ref{de:NR_NP}. There exist even simply connected 
 homogeneous nearly parallel $\G_2$--spaces whose metric tensors are not
 naturally reductive. In order to understand such subtleties the 
 following proposition will turn out to be helpful.

 \begin{proposition}
 \label{p:NR_NP}
  \begin{enumerate}
   \item  Let both a nearly parallel $\G_2$--structure $\sigma$ and a naturally reductive
            structure $\bar\nabla$ on a compact Riemannian manifold $(\,M,\,g\,)$ be given.
            If $\Tr(\,\bar\nabla\,)\,\subset\,\Aut(\,M,\,\sigma\,)$, 
            then $\bar\nabla\,=\,\nabla^c$. In particular, then $(\,M,\,g,\,\sigma\,)$ is naturally reductive.
   \item For a simply connected naturally reductive
            nearly parallel $\G_2$--space $(\,M,\,g,\,\sigma\,)$ 
            which is not isometric to $(\,\S^7,\,g_{\mathrm{round}}\,)$,
            we have $\Isom(\,M,\,g\,)\,=\, \Aut(\,M,\,\sigma\,)$.
  \end{enumerate} 
 \end{proposition}
 
 \begin{proof}
  If $\Tr(\,\bar\nabla\,) \,\subset\,\Aut(\,M,\,\sigma\,)$, then $\bar\nabla\sigma\,=\,0$
  because $\Tr(\,\bar\nabla\,)$--invariant tensors are automatically $\bar\nabla$--parallel.
  Therefore, $\bar \nabla\,=\,\nabla^c$ according to Theorem~\ref{th:FI}. 
  This proves the first statement. For the second one, Theorem~\ref{th:OR} implies that
  $\Isom(\,M,\,g\,)\,\subset\,\Aff(\,M,\,\nabla^c\,)$; therefore,
  $\Isom(\,M,\,g\,)\,\subset\,\Aff(\,M,\,\nabla\,)\,\cap\,
  \Aff(\,M,\,\nabla^c\,)\,\subset\,\Aut(\,M,\,\sigma)$, where the 
  last inclusion uses that $\nabla^c\,-\,\nabla\,=\,-\,\frac{\tau_\circ}{12}\,\sigma$.
  In particular, we conclude that $\Isom(\,M,\,g\,)\,\subset\,\Aut(\,M,\,\sigma)$;
  thus, $\Aut(\,M,\,\sigma\,)\,=\,\Isom(\,M,\,g\,)$ 
  since the opposite inclusion is always satisfied.
 \end{proof}

 \bigskip\noindent
 By definition, the canonical connection of a naturally reductive nearly parallel 
 $\G_2$--space $(\,M,\,g,\,\sigma\,)$  is a naturally reductive structure whose 
 torsion form is a non--zero multiple of $\sigma$. Conversely we have:

 \begin{lemma}\cite[Lemma 7.1]{AlS}
 \label{le:AS}
  Let a $7$--dimensional naturally reductive  space  $(\,M,\,g,\,\bar\nabla\,)$ 
  with torsion form $\tau$  be given. If there exists a non--zero constant 
  $c$ such that $\sigma\,:=\, \frac1c\,\tau$ is a $\G_2$--structure, 
  then $\sigma$ is automatically nearly parallel and $\bar\nabla\,=\,\nabla^c$.  
 \end{lemma}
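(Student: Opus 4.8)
The plan is to reduce the statement to a single pointwise algebraic identity for the $\G_2$--vector cross product and then to read off the constants. First I would use that on a naturally reductive space the torsion form is $\bar\nabla$--parallel, so that $\bar\nabla\tau\,=\,0$ and hence $\bar\nabla\sigma\,=\,0$ because $\sigma\,=\,\frac1c\,\tau$. Writing $\bar\nabla\,=\,\nabla\,+\,\frac\tau2$ as in Definition \ref{de:NRS}, the difference tensor acts on the $3$--form $\sigma$ through the operator $\star$ introduced in the proof of Proposition \ref{p:nearly_Kaehler}, so that $0\,=\,\bar\nabla_X\sigma\,=\,\nabla_X\sigma\,+\,\frac12\,\tau_X\star\sigma\,=\,\nabla_X\sigma\,+\,\frac c2\,\sigma_X\star\sigma$, using $\tau_X\,=\,c\,\sigma_X$. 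Thus $\nabla_X\sigma\,=\,-\,\frac c2\,\sigma_X\star\sigma$, and the whole lemma hinges on evaluating $\sigma_X\star\sigma$.

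The key step is the pointwise identity $\sigma_X\star\sigma\,=\,3\,X\lrcorner*\sigma$, valid for any $3$--form $\sigma$ of the algebraic type \eqref{eq:sigma}. Both sides are linear in $X$ and $\G_2$--equivariant, because $\sigma$ and its Hodge dual $*\sigma$ are $\G_2$--invariant; that is, they are $\G_2$--equivariant linear maps $T\longrightarrow\L^3T^*$. Since $T\,\cong\,\R^7$ is irreducible and occurs with multiplicity one in the decomposition $\L^3T^*\,\cong\,\R\,\oplus\,\R^7\,\oplus\,\Sym^2_0\R^7$ of dimensions $1+7+27$, Schur's Lemma forces the two maps to be proportional; alternatively one invokes the transitivity of $\G_2$ on the unit sphere $\S^6\subset T$ exactly as in the proof of Proposition \ref{p:nearly_Kaehler}. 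The proportionality constant is then pinned down by evaluating both sides on a single unit vector, say $X\,=\,E_1$, directly from the explicit model \eqref{eq:sigma}. I expect \emph{this} to be the main obstacle: it is a routine but delicate computation, and one must fix the $\G_2$--compatible orientation so that $*\sigma$ carries the correct sign and the constant comes out to be $+3$ rather than $-3$.

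Granting the identity, I would finish by setting $\tau_\circ\,:=\,-\,6\,c$. Then $\nabla_X\sigma\,=\,-\,\frac c2\,\sigma_X\star\sigma\,=\,-\,\frac{3\,c}2\,X\lrcorner*\sigma\,=\,\frac{\tau_\circ}4\,X\lrcorner*\sigma$, which is precisely the nearly parallel equation \eqref{eq:NP} with nonzero constant $\tau_\circ$; hence $\sigma$ is nearly parallel. Finally $\bar\nabla\,=\,\nabla\,+\,\frac\tau2\,=\,\nabla\,+\,\frac c2\,\sigma\,=\,\nabla\,-\,\frac{\tau_\circ}{12}\,\sigma$, which is the asserted formula \eqref{eq:IG2C}. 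In summary, the entire lemma rests on the single $\G_2$--representation--theoretic identity $\sigma_X\star\sigma\,=\,3\,X\lrcorner*\sigma$, and the only genuine difficulty lies in getting its constant and sign right.
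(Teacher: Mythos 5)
The paper does not actually prove Lemma \ref{le:AS}: it is quoted from \cite[Lemma~7.1]{AlSe} without argument, so your proposal supplies a proof where the paper has none. Your route is the expected one and is correct in substance: $\bar\nabla\tau=0$ because $\tau$ is the torsion of an Ambrose--Singer connection, so (using the convention of Lemma \ref{le:Lemma4}) $\nabla_X\sigma=-\tfrac12\,\tau_X\star\sigma=-\tfrac c2\,\sigma_X\star\sigma$, and everything reduces to the pointwise identity $\sigma_X\star\sigma=\kappa\,X\lrcorner*\sigma$ for the model form \eqref{eq:sigma}. Your Schur argument is sound: both sides are $\G_2$--equivariant linear maps $\R^7\to\L^3\R^{7*}$, and $\R^7$ occurs with multiplicity one in $\L^3\R^{7*}\cong\R\oplus\R^7\oplus\Sym^2_0\R^7$, so they are proportional; moreover $\sigma_X\star\sigma\neq 0$ for $X\neq 0$ because $\sigma_X\notin\mathfrak{g}_2=\mathrm{stab}(\sigma)$, so $\kappa\neq 0$ and \eqref{eq:NP} holds with $\tau_\circ=-2c\kappa\neq 0$; the identification $\bar\nabla=\nabla+\tfrac c2\sigma=\nabla-\tfrac{\tau_\circ}{12}\sigma$ then requires exactly $\kappa=3$. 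You are right that this constant is the only delicate point: carrying out the evaluation at $X=E_1$ with the explicit $\sigma$ of \eqref{eq:sigma}, the action $A\star\omega=\sum_\mu(AE_\mu)^\flat\wedge E_\mu\lrcorner\omega$ used in the proof of Proposition \ref{p:nearly_Kaehler}, and the orientation $dE_1\wedge\cdots\wedge dE_7$, one finds $\sigma_{E_1}\star\sigma=-3\,E_1\lrcorner*\sigma$, which would give $\tau_\circ=+6c$ and $\bar\nabla=\nabla+\tfrac{\tau_\circ}{12}\sigma$; the stated signs are recovered only after fixing the orientation (equivalently replacing $\sigma$ by $-\sigma$), which Definition \ref{de:NP} and the remark following it explicitly permit, and which is consistent with $\tau=-\tfrac{\tau_\circ}{6}\sigma$ as used in the proof of Corollary \ref{co:NR_NP}. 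So there is no gap, only the orientation-dependent sign that you yourself flag as the pitfall.
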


 \noindent
 We obtain the following analogue of Proposition~\ref{p:NK}:

 \begin{corollary}\label{co:NR_NP}
  The assignment $(\,M,\,g,\,\sigma\,)\longmapsto\,(\,M,\,g,\,\nabla^c\,:=
 \,\nabla\,-\,\frac{\tau_\circ}{12}\sigma\,)$ establishes a 1--1 correspondence between:
  \begin{itemize}
   \item  naturally reductive nearly parallel $\G_2$--spaces
            $(\,M,\,g,\,\sigma\,)$ in the sense of Definition \ref{de:NR_NP};
   \item  $7$--dimensional oriented naturally reductive spaces
            $(\,M,\,g,\,\bar\nabla\,)$ whose torsion form $\tau$
            is a non--zero multiple of a vector cross product.
  \end{itemize} 
 \end{corollary}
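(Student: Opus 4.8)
The plan is to prove Corollary~\ref{co:NR_NP} as a direct consequence of Lemma~\ref{le:AS} together with the fact, recalled just before the statement, that the canonical connection of a nearly parallel $\G_2$--space satisfies $\bar\nabla\sigma\,=\,0$. The strategy mirrors the proof of Proposition~\ref{p:nearly_Kaehler}: I would show that the assignment is well defined, injective and surjective, but the bulk of the work is already packaged into Lemma~\ref{le:AS}, so the argument should be short.

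First I would check that the assignment is well defined. Given a naturally reductive nearly parallel $\G_2$--space $(\,M,\,g,\,\sigma\,)$ in the sense of Definition~\ref{de:NR_NP}, the connection $\bar\nabla\,=\,\nabla\,-\,\frac{\tau_\circ}{12}\sigma$ is by hypothesis an Ambrose--Singer connection, so $(\,M,\,g,\,\bar\nabla\,)$ is a naturally reductive space. Its torsion form is $\tau\,=\,-\,\frac{\tau_\circ}{6}\sigma$ (twice the difference tensor, matching the convention $\bar\nabla\,=\,\nabla\,+\,\frac\tau2$ of Definition~\ref{de:NRS}), which is a non--zero multiple of the vector cross product $\sigma$ since $\tau_\circ\,\neq\,0$. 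The orientation is fixed by $\sigma$. Hence the target object is of the required type.

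Next I would address surjectivity, which is where Lemma~\ref{le:AS} does the real work. Let $(\,M,\,g,\,\bar\nabla\,)$ be a $7$--dimensional oriented naturally reductive space whose torsion form $\tau$ is a non--zero multiple of a vector cross product, say $\sigma\,:=\,\frac1c\,\tau$ with $c\,\neq\,0$, so that $\sigma$ is locally of the form \eqref{eq:sigma} and thus defines a $\G_2$--structure on $(\,M,\,g\,)$. By Lemma~\ref{le:AS} this $\sigma$ is automatically nearly parallel and one has $\bar\nabla\,=\,\nabla\,-\,\frac{\tau_\circ}{12}\sigma$, which identifies $\bar\nabla$ as the canonical connection of the nearly parallel structure; being an Ambrose--Singer connection by assumption, $(\,M,\,g,\,\sigma\,)$ is naturally reductive in the sense of Definition~\ref{de:NR_NP}, and it maps to the given triple. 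For injectivity I would note that $\bar\nabla$ determines its torsion form $\tau$, and since $\sigma\,=\,\frac1c\,\tau$ with the normalization constant $c$ pinned down by requiring $\sigma$ to have the pointwise normalization of a standard vector cross product (and the orientation fixing the sign), the $\G_2$--structure $\sigma$ is recovered uniquely from $(\,M,\,g,\,\bar\nabla\,)$.

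The main subtlety I anticipate is bookkeeping of the normalizations rather than any deep obstacle. One must be careful that the constants $c$ (relating $\tau$ and $\sigma$) and $\tau_\circ$ (from Definition~\ref{de:NP}) are compatible, and that the sign conventions in $\bar\nabla\,=\,\nabla\,+\,\frac\tau2$ versus $\bar\nabla\,=\,\nabla\,-\,\frac{\tau_\circ}{12}\sigma$ match up so that $\tau\,=\,-\,\frac{\tau_\circ}6\sigma$ is consistent with $\sigma\,=\,\frac1c\,\tau$; this forces $c\,=\,-\,\frac{\tau_\circ}6$, which must be tracked to guarantee the two maps are mutually inverse. Since a vector cross product in the sense of Definition~\ref{de:VCP} has a fixed pointwise norm, the constant $c$ is determined up to sign by $\tau$, and the orientation removes the remaining ambiguity, so the correspondence is genuinely bijective. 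All the analytic content—that the $\G_2$--structure is forced to be nearly parallel and that $\bar\nabla$ is its canonical connection—is supplied by Lemma~\ref{le:AS}, so no further curvature computation is needed.
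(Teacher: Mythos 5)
Your proposal is correct and follows essentially the same route as the paper: well--definedness and injectivity come from the identity $\tau\,=\,-\,\frac{\tau_\circ}{6}\,\sigma$ (with the orientation fixed by $\sigma$), and surjectivity is exactly Lemma~\ref{le:AS}. Your extra bookkeeping of the normalization constant $c\,=\,-\,\frac{\tau_\circ}{6}$ only makes explicit what the paper leaves implicit.
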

 \begin{proof}
  It is clear that the assignment is well defined
  and injective. It is also surjective according to Lemma \ref{le:AS}.
 \end{proof}

 \bigskip
 \noindent
  It remains to classify naturally reductive structures on nearly parallel $\G_2$--spaces. 

 \begin{remark}\label{re:NR_NP}
  Recall that ~\cite[Eq.~(7.87\,b)]{Bes} implies that normal homogeneous spaces  
  have non--negative sectional curvature. Refining this argument, one sees that 
  the sectional curvature of a normal homogeneous nearly parallel $\G_2$--space 
  $(\,M,\,g,\,\sigma\,)$ in the sense of Definition \ref{de:NR_NP} is positive. 
  Therefore, $(\,M,\,g,\,\nabla^c\,)$ is a $7$--dimensional normal homogeneous space
  of positive sectional curvature. For sake of simplicity, let us assume that $(\,M,\,g,\,\nabla^c\,)$ is even 
  one of the  $7$--dimensional  simply connected standard normal homogeneous spaces 
  of positive sectional curvature mentioned in Section \ref{se:ingredients}.
  Then, the results from \cite{FKMS} imply at once that $(\,M,\,g,\,\nabla^c\,)$ 
  is equal to $\S^7\,\cong\,\Spin(\,7\,)/\G_2$, $\S^7_{\mathrm{squashed}}$, $V_1$ or $V_3$, cf. \cite[Chpt. 8]{AlS}.
 \end{remark}

\subsection{Discussion of the classification of 
 Homogeneous Nearly Parallel $\G_2$--Spaces from \cite{FKMS}}
\label{se:CLASS_G2}

 In this section, we will decide which compact simply connected nearly parallel 
 $\G_2$--spaces carry naturally reductive metrics and which, by Definition 
 \ref{de:NR_NP}, are even naturally reductive. In Theorem \ref{th:CLASS_G2} we give a 
 summary of our analysis. Together with the results of  Section \ref{se:NR_NK} this provides
 also the proof of Theorem  \ref{th:GVCP}.  
 
 \bigskip
 \noindent
 The classification of compact simply connected nearly parallel $\G_2$--spaces
 which admit a transitive action by  $\G_2$--automorphisms is given in~\cite{FKMS}. 
 More precisely, in this  article the reader may find the list  of all effective homogeneous 
 pairs $(\,G,\,H\,)$ where $G$ acts transitively on some compact simply connected 
 nearly parallel $\G_2$--space $(\,M,\,g, \,\sigma\,)$ such that $M$ results in the 
 quotient $G/H$. Recall that $M$ is automatically spin and that any two spin structures 
 are isomorphic since $M$ is simply connected \cite{LM}.  In fact, there are precisely 
 two isomorphisms between two given spin structures  (similar to isomorphic $2$--fold 
 coverings of one and the same topological space). Let us choose one spin structure as a fixed 
 reference, and denote by $KS(\,M,\,g\,)$ the linear space of Killing spinors. As mentioned before, 
 $\sigma$ defines a second spin structure together with a unit Killing spinor. Because there 
 are two possibilities to identify the two spinor bundles with each other, this Killing spinor is 
 actually only well defined up to a sign in $KS(\,M,\,g\,)$, i.e., $\sigma$ defines a projective class 
 in $KS(\,M,\,g\,)$. Conversely, with the ideas hinted at in Section \ref{se:GVCP}, every projective 
 class in $KS(\,M,\,g\,)$ defines a nearly parallel $\G_2$--structure $\sigma$, see also \cite[Eq. (\,2\,)]{FKMS}. 
 In this way the projective space $\Proj KS(\,M,\,g\,)$  of projective 
 classes of Killing spinors (of a fixed spin structure) can be identified with the set of all nearly parallel 
 $\G_2$--structures $\sigma$ on $(\,M,\,g\,)$. For details of this argument, see \cite[Chpt. 2 \& 3]{FKMS}. 
 As a result, the group of orientation preserving isometries acts naturally by pullback on 
 $\sigma\,\in\, \Proj KS(\,M,\,g\,)$, and the isotropy subgroup of $\sigma$ is given by $\Aut(\,M,\,\sigma\,)$.

\bigskip 
 \noindent 
 The dimension of $KS(\,M,\,g\,)$  is called the type, cf.~\cite[Chpt. 4]{FKMS}. 
 If $(\,M,\,g\,)$ is different from $(\,\S^7,\,g_{\mathrm{round}}\,)$,
 then this  number is at most three. Simply connected compact  nearly parallel $\G_2$--spaces 
 of type $3$ are $3$--Sasakian; conversely,
 every $7$--dimensional simply connected  compact $3$--Sasakian manifold
 different from $\S^7_{\mathrm{round}}$ is nearly parallel $\G_2$ of type $3$,
 cf. \cite[Chpt. 6]{FK} and \cite[Theorem 5.2.6]{BG1}. Similarly,
 simply connected compact nearly parallel  $\G_2$--spaces of type 2 are 
 Sasaki--Einstein, but not $3$--Sasakian, and every $7$--dimensional  simply connected  
 compact strict Sasaki--Einstein manifold $(\,M,\,g\,)$ is nearly parallel $\G_2$ of type 2 
 according to \cite[Chpt. 5]{FK}. Also recall that compact simply connected 
 nearly  parallel $\G_2$--spaces of type $1$ are called proper. 
 Here, we have $\Proj KS(\,M,\,g\,)\,=\,\{\sigma\}$, i.e., $\sigma$ is the unique nearly parallel $\G_2$--structure. 

 \begin{lemma}\label{le:proper}
  For every simply connected homogeneous nearly parallel $\G_2$--space $(\,M,\,g,\,\sigma\,)$,
  the connected component  $\Isom(\,M,\,g\,)_\circ$ of the isometry group acts transitively on $\Proj KS(\,M,\,g\,)$.
 \end{lemma}
 \begin{proof}  
  For the round sphere $\S^7$ this follows from \cite{Fried}. In all other cases, although the assertion of our lemma
  is not explicitly mentioned in \cite{FKMS}, it is nevertheless an obvious consequence of the results proved in that paper:
  The groups $\Isom(\,M,\,g\,)_\circ$ are well known (see \cite[Chpt. 4]{FKMS}), and the possible automorphism groups 
  of dimension at least ten are precisely determined in \cite[Chpt. 7]{FKMS}. It follows that the codimension of 
  $\Aut(\,M,\,\sigma\,)_\circ$ in $\Isom(\,M,\,g\,)_\circ$ is always the maximal possible
  $\dim\,\Proj KS(\,M,\,g\,)\,=\,\dim\, KS(\,M,\,g\,)\,-\,1$ for all $\sigma\,\in\,\Proj KS(\,M,\,g\,)$.
  Therefore, $\Isom(\,M,\,g\,)_\circ$  acts transitively on $\Proj KS(\,M,\,g\,)$. 
  We will explicitly describe the groups $\Aut(\,M,\,\sigma\,)_\circ$ and $\Isom(\,M,\,g\,)_\circ$ in the following paragraphs.
 \end{proof}

 \begin{remark}\label{re:3--Sasakian}
  Let $(\,M,\,g\,)$ be a simply connected compact $3$--Sasakian manifold of dimension seven.
  We also assume that $(\,M,\,g\,)$ is not isometric to the round sphere.
  Let $\{\xi_1,\,\xi_2,\,\xi_3\}$ be a characteristic triple  of Sasakian vector fields 
  with metric duals $\eta_i\,=\,\xi_i^\flat$. According to \cite[Theorem 6.2]{AF} the $3$--form
  \begin{equation}
  \label{eq:AF}
   \sigma\,:=\,\frac12\;\eta_1\,\wedge\,\d\,\eta_1\,-\,\frac12\;\eta_2\,\wedge\,\d\,\eta_2\,-\,\frac12\;\eta_3\,\wedge\,\d\,\eta_3
  \end{equation}
  belongs to $\Proj KS(\,M,\,g\,)$ and every element of $\Proj KS(\,M,\,g\,)$ has this form.
  Since the characteristic factor of $\Isom(\,M,\,g\,)_\circ$ (which is isomorphic to $\SO(\,3\,)$ or $\Sp(\,1\,)$,
  cf. \cite[Chpt. 3.2]{BG1}) acts transitively on the set of characteristic triples, it follows at once that it also 
  acts transitively on $\Proj KS(\,M,\,g\,)$. In contrast, for a simply connected compact strict 
  Einstein Sasakian manifold there does seemingly not exist such an explicit description of the projective line
  $\Proj KS(\,M,\,g\,)$. It is also not known by the  authors whether in this case the characteristic 
  $\T^1$--factor of the isometry group always acts  transitively on $\Proj KS(\,M,\,g\,)$. (In the homogeneous case it does so.) 
 \end{remark}

 \begin{corollary}\label{co:proper}
  Let $(\,M,\,g,\,\sigma\,)$ be a simply connected compact  nearly parallel $\G_2$--space
  which is not isometric to $(\,\S^7,\,g_{\mathrm{round}}\,)$.
  Then, $(\,M,\,g,\,\sigma\,)$ is naturally reductive in the sense of Definition~\ref{de:NR_NP}
  if and only if $\Proj KS(\,M,\,g\,)\,=\,\{\sigma\}$ (i.e., $(\,M,\,g\,)$  is proper) 
  and $g$ is a naturally reductive metric.
 \end{corollary}
 \begin{proof}
  If $(\,M,\,g,\,\sigma\,)$ is naturally reductive, then it is a homogeneous nearly parallel $\G_2$--space 
  and $g$ is a naturally reductive metric. Also the second part of Proposition \ref{p:NR_NP}
  shows that $\Aut(\,M,\,\sigma\,)_\circ\,=\,\Isom(\,M,\,g\,)_\circ$. Therefore, $\Proj KS(\,M,\,g\,)\,=\,\{\sigma\}$ 
  by Lemma \ref{le:proper}. Conversely, if $\Proj KS(\,M,\,g\,)\,=\,\{\sigma\}$, then $\Aut(\,M,\,\sigma\,)$ coincides necessarily with 
  the group of orientation preserving isometries.  In particular, $\Aut(\,M,\,\sigma\,)_\circ\,=\,\Isom(\,M,\,g\,)_\circ$. 
  If there exists furthermore a naturally reductive structure $\bar\nabla$, then 
  $\Tr(\,\bar\nabla\,)\,\subset\, \Isom(\,M,\,g\,)_\circ\,=\,\Aut(\,M,\,\sigma\,)_\circ$. 
  Hence, the first part of Proposition \ref{p:NR_NP} shows that $(\,M,\,g,\,\sigma\,)$ is naturally reductive.
 \end{proof}
 
\paragraph{The round sphere $\S^7$}

 \begin{proposition}\label{p:S7}
 Every nearly parallel $\G_2$--structure on $\S^7_{\mathrm{round}}$ is standard normal homogeneous
 in the sense of Definition~\ref{de:NR_NP}. More precisely, its canonical connection is conjugated by an element of 
 $\SO(\,8\,)$ to the normal homogeneous structure of $\Spin(\,7\,)/\G_2\,$.
\end{proposition}
\begin{proof}
  It is known that the unique $\Spin(\,7\,)$--invariant $\G_2$--structure $\sigma$ on 
  $\S^7_{\mathrm{round}}$
  is nearly parallel, cf. \cite[Remark 5.2.4]{BG1}. 
  The first part of Proposition \ref{p:NR_NP} implies that $(\,\S^7_{\mathrm{round}},\,\sigma\,)$
  is the normal homogeneous nearly parallel $\G_2$--space $\Spin(\,7\,)/\G_2\,$. 
  Furthermore, every other nearly parallel $\G_2$--structure of $\S^7_{\mathrm round}$
  is conjugated to $\sigma$ by an element of $\SO(\,8\,)$ \cite{Fried}. 
  The same element of $\SO(\,8\,)$ also conjugates the canonical connections. The result follows. 
\end{proof}

\paragraph{\bf Homogeneous Nearly Parallel $\G_2$--Spaces of Type 1}
 We consider the three standard normal homogeneous spaces of positive sectional curvature 
 $\S^7_{\mathrm{squashed}}$, $V_1$ and $V_3$ defined after Theorem \ref{th:LJR}. 
 The standard normal metrics of $\S^7_{\mathrm{squashed}}$ and $V_3$ 
 are the second Einstein metrics in the canonical variation of the $3$--Sasakian metrics of
 $\S^7_{\mathrm{round}}\,=\,\Sp(\,2\,)\,/\,\Sp(\,1\,)$ and the Aloff-Wallach space 
 $\NM(\,1,\,1\,)\,=\,\SU(\,3\,)\,/\,\U(\,1\,)$ (not the normal Wilking space), respectively, cf. 
 \cite[Chpt. 2.4]{BG1}. It is known that the second Einstein metric is proper nearly parallel 
 $\G_2$, cf. \cite[Chpt. 5]{FKMS} and \cite[Remark 5.2.4]{BG1}. Also $V_1$ carries a proper 
 nearly parallel $\G_2$--structure, cf. \cite[Chpt. 5]{Bry}.  By properness, all three spaces  
 automatically satisfy $\Aut(\,M,\,\sigma\,)_\circ\,=\, \Isom(\,M,\,g\,)_\circ$ 
 (the connected isometry groups can be found in \cite{WZ}).
 As a consequence of Corollary \ref{co:proper}, we see:

 \begin{proposition}
  \label{p:EX_NR_NP}
  The nearly parallel $\G_2$--structures of $\S^7_{\mathrm{squashed}}$, $V_1$ and
  $V_3$ are standard normal homogeneous in the sense of Definition~\ref{de:NR_NP}.
 \end{proposition}
 
 \noindent
 The fact that the torsion forms of $\S^7_{\mathrm{squashed}}$, $V_1$ and $V_3$ satisfy 
 the conditions of Lemma \ref{le:AS} is also mentioned in \cite[Chpt. 8]{AlS}.

 \bigskip
 \noindent
 The Aloff--Wallach spaces $\NM(\,k,\ell\,)$ are defined as follows (cf. \cite{WZ}):
 Let $G\,:=\,\SU(\,3\,)$, $K\,:=\,\U(\,1\,)\times \U(\,1\,)$. 
 Then, $\bbF^3\,=\,G\,/\,K\,$ is the complex flag manifold. Every pair
 of non--zero integers $k,\,\ell$ defines a connected $1$--dimensional
 (normal) subgroup $H\,\subset\,K$ via the embedding 
 $$
  \iota\;:\qquad
  \R  \;\longrightarrow\;\su(3),\qquad t\;\longmapsto\;
  \diag(\;i\,k\,t\;,\;i\,\ell\,t\;,\;-\,i\,(\;k\;+\;\ell\;)\,t\;) 
 $$
 Without loss of generality one can also assume that $k\,\geq\,\ell\,\geq\,1$ and that
 $\mathrm{gcd}(\,k,\,\ell\,)\,=\,1$. The homogeneous space $\NM(\,k,\ell\,) \,:=\,G/H$ 
 is called an Aloff--Wallach space. For $k\,>\,\ell\,\geq\,1$ 
 there exist two different homogeneous proper nearly parallel 
 $\G_2$--structures $\sigma_i$, see~\cite[Theorem 3.2]{CMS}. 
 Since $\sigma_i$ is proper, it follows that 
 $\Aut(\,\NM(\,k,\ell\,),\,\sigma_i)_\circ\,=\,\Isom(\,\NM(\,k,\ell\,),\,g_i\,)_\circ$
 where $g_i$ denotes the underlying nearly  parallel $\G_2$--metric for $i\,=\,1,\,2$. 
 Furthermore, we have  $\Isom(\,\NM(\,k,\ell\,),\,g_i\,)_\circ\,
 = \,\SU(\,3\,)\times\T^1 / C$ with $\T^1\,:=\,K\,/\,H$ and $C\in \{\{e\},\Z_3\}$, cf. \cite {FKMS,WZ}.
 It is also known that the $g_i$ are the only 
 homogeneous Einstein metrics on  $\NM(\,k,\,\ell\,)$, cf.~\cite{Nik}. 
 In particular, the following proposition shows that there do 
 not exist naturally reductive Einstein metrics on
 Aloff--Wallach spaces different from $\NM(\,1,\,1\,)$.

 \begin{proposition}\label{p:AW}
  The nearly parallel $\G_2$--metrics $g_i$ of $\NM(\,k,\,\ell\,)$ are not naturally reductive 
  for $k\,>\,\ell\,\geq\,1$.
 \end{proposition}

 \begin{proof}
 Let us assume for a contradiction that $g\,:=\,g_i$ is naturally reductive. 
 Since $\Isom(\,\NM(\,k,\ell\,),\,g\,)_\circ$ is covered by  $\SU(\,3\,)\times\T^1$, 
 the corresponding transvection group is covered by $\SU(\,3\,)$ or $\,\SU(\,3\,)\times\T^1$.
 In the first case, $g$ is the naturally reductive metric induced on $G\,/\,H$ by a negative multiple 
 $B\,=\,c\,\rmK_{\su(\,3\,)}$ of the Killing form of $\su(\,3\,)$. In the second case, we have to use the 
 description $\NM(\,k,\,\ell\,)\,=\,\hat G\,/\,\hat K$ where $\hat G\,:=\,\SU(\,3\,)\,\times\,\T^1$ and 
 $\hat K$ is the subgroup of $\hat G$ given by $\{\,(\,k,\,k\,H\,)\,|\,k\,\in\,K\,\}$. 
 Then, there exists some non--vanishing symmetric bilinear form $\tilde B$ 
 on the Lie algebra $\t^1$ of the factor $\T^1$ such that $g$ is the naturally reductive 
 metric induced by $\hat B\,:=\,B\,\oplus\, \tilde B$ on $\hat G\,/\,\hat K$.
 The calculations from~\cite[Chpt. 4.5]{BFGK} show that both cases are not possible.
 We will not carry out the details of this argument since there is also a second one: 
 Recall that the Aloff Wallach spaces are constructed from $\bbF^3$ by the general principle 
 described in Section \ref{se:NR_FB}. Hence
 $B\,=\,\hat B_0$ and $\hat B$ both belong to the family $\hat B_s$ of 
 symmetric invariant bilinear forms on $\su(\,3\,)$ or $\su(\,3\,)\oplus\,\t^1$
 parameterized by $s\,>\,-\,1$ considered in Lemma \ref{le:Nor_SG}. 
 The torsion form $\hat \tau$ associated with $\hat B_s$ 
 is calculated in Lemma \ref{le:T_and_R}, see \eqref{eq:HAT_TAU}. 
 On the other hand, because $(\,\NM(\,k,\,\ell\,),\,g)$ is proper, the latter is 
 at the same time a non--zero multiple of a vector cross product  according to 
 Corollary~\ref{co:proper}. It is straightforward to show that these two conditions are 
 contrary to each other, see Proposition~\ref{p:semi-simple} in Section~\ref{se:SP}.
 Effectively, this provides two independent arguments why neither of the two
 homogeneous nearly parallel $\G_2$--metrics on $\NM(\,k\,,\ell\,)$ is
 naturally reductive.
 \end{proof}

 \bigskip
 \noindent
 We have seen in the proof of Proposition \ref{p:AW} that there exists a $2$--parameter 
 family of naturally reductive metrics on $\NM(\,k\,,\ell\,)$, which are bundle--like 
 over $\bbF^3$, see also Tables 5 and 7 of \cite{St3}. 
 However, the two homogeneous nearly parallel $\G_2$--metrics 
 are not part of this family: they are not bundle--like over $\bbF^3$.

 \paragraph{\bf Homogeneous Nearly Parallel $\G_2$--Spaces of Type 2}
 Here, $(\,M,\,g\,)$ is a $7$--dimensional homogeneous simply connected 
 strict Sasaki--Einstein manifold (not $3$--Sasakian). Consequently, $M$
 is an $\S^1$--fiber bundle over a generalized flag manifold and $g$ is 
 bundle--like \cite [Theorem 3.1.2]{BG1} and \cite[Theorem 3.4]{BG2}. 
 Conversely, every generalized flag manifold is  the base space of a unique 
 $\S^1$--fiber bundle whose total space is a simply connected homogeneous 
 Sasaki--Einstein manifold \cite[Remark 3.1.4]{BG1}.
 The generalized flag manifolds in dimension six are: 
 the complex flag manifold $\bbF^3\,=\,\SU(\,3\,)/\U(\,1\,)\times \U(\,1\,)$ 
 (with the --  unique up to the action of the Weyl group -- homogeneous K\"ahler--Einstein metric \cite[Theorem 8.2]{Bes}, not the nearly K\"ahler one),
 and the following Einstein Hermitian symmetric spaces: $\CP^3$, $\CP^2\,\times\,\CP^1$,  $\CP^1\,\times\,\CP^1\,\times\,\CP^1$ 
 and $\Gr_+(\,5\,,\,2\,)$ (the real Grassmannian of oriented $2$--planes in $\R^5$),  cf. \cite[Corollary 3.1.3 (iii)]{BG1}. 
 From $\bbF^3$ and $\CP^3$, we obtain $3$--Sasakian manifolds via this construction.
 In order to describe the remaining three simply connected homogeneous strict 
 Sasaki--Einstein manifolds in an explicit way, we consider first, more general,
 the following $\S^1$--fiber bundles over  $\Gr_+(\,n\,,\,2\,)$, $\CP^2\,\times\,\CP^1$ and $\CP^1\,\times\,\CP^1\,\times\,\CP^1$,
 respectively (cf. \cite[p. 64]{DNP} and the references mentioned there):

 \begin{enumerate}
  \item Let $G\,:=\,\SO(\,n\,)$, $K\,:=\,\mathbf{S}(\,\O(\,n\,-\,2\,)\,\times\,\O(\,2\,)\,)$ and
        $H\,:=\,\SO(\,n\,-\,2\,)$. The quotients $G/H$ and $G/K$ are the Stiefel
        manifold $\VM(\,n,\,2\,)$ of orthonormal $2$--frames and the
        Grassmannian $\G_+(\,n,\,2\,)$ of oriented $2$--planes in $\R^n$, respectively.
        Hence $\VM(\,n,\,2\,)$ is a simply connected homogeneous $\S^1$--fiber bundle over 
        $\Gr_+(\,n,\,2\,)$ via the standard projection.
  \item Let $G\,:=\,\SU(\,3\,)\times\SU(\,2\,)$ and $K\,:=\,\U(\,2\,)\,\times\,\U(\,1\,)$.
          For any $(\,k\,,\,\ell\,)\,\in\,\Z_+^2$ we define a subalgebra
          $\frakh\,\subset\,\u(\,2\,)\,\oplus\,\u(\,1\,)$ by
          $$
           \frakh
           \;\;:=\;\;
           \left\{\left .
           \begin{pmatrix}
            A\,+\,i\,k\,t\,\id & 0 \cr
            0 & -2\,i\,k\,t
           \end{pmatrix}
           \;\oplus\;
           \begin{pmatrix}
           i\,\ell\,t\, & 0 \cr
           0 & -\,i\,\ell\,t
           \end{pmatrix}
           \;\;\right|\;\;A\,\in\,\su(2),\,t\,\in\,\R
           \right\}
          $$ 
          We denote by $\MM(\,k,\,l\,)$ the homogeneous space $G/H$
          where $G\,:=\,\SU(\,3\,)\,\times\,\SU(\,2\,)$ and $H$ 
          is the subgroup of $K$ whose Lie algebra
          is $\frakh$. Then, a finite covering space of $H$ is isomorphic to 
          $\SU(\,2\,)\,\times\,\U(\,1\,)$. Hence, $\MM(\,k,\,\ell\,)$ is a simply connected  homogeneous
          $\S^1$--fiber bundle over $G/K\,=\,\CP^2\times\CP^1$. If the two non--zero integers $k,\,\ell$
          are relatively prime, then our space $\MM(\,k,\,\ell\,)$ is isomorphic to Witten's $\MM^{\ell,\,k,\,1}$;
          in \cite[p.64]{DNP} and \cite{PP} the latter space is denoted by $\MM(\,\ell,\,k\,)$
          (with the two integers $k$ and $\ell$ interchanged). 
  \item Let $G\,:=\,\SU(\,2\,)\,\times\,\SU(\,2\,)\,\times\,\SU(\,2\,)$ and $K$
          be the standard maximal torus $\U(\,1\,)\,\times\,\U(\,1\,)\,\times\,\U(\,1\,)$. 
          By definition, the Lie algebra of $\u(\,1\,)$ is spanned by $i\,\sigma_3$
          where $\sigma_3$ denotes the third Pauli matrix.
          For every triple $(\,k\,,\,\ell\,,\,m\,)\,\in\,\Z_+^3$, we denote by $\frakh\,
          \subset\, \u(\,1\,)\,\oplus\,\u(\,1\,)\,\oplus\,\u(\,1\,)$ the orthogonal complement of
          $\frakh^\perp\,:=\,\{\,i\,k\,t\,\sigma_3\,\oplus\,i\,\ell\,t\,\sigma_3\,
          \oplus\,i\,m\,t\,\sigma_3\,\mid\,t\,\in\,\R\,\}$ with respect to the standard normal structure.
          We set $\QM(\,k,\,\ell,\,m\,)\,:=\,G/H$ where $H\,\cong\,\U(\,1\,)\,\times\,\U(\,1\,)$ is the subgroup
          of $K$ whose Lie algebra is $\frakh$. By construction,
          $\QM(\,k,\,\ell\,,\,m\,)$ is a simply connected  homogeneous $\S^1$--fiber bundle over
          $\CP^1\times\CP^1\times\CP^1$. If the three non--zero integers $k$, $\ell$ and $m$ are relatively prime,
          then our notation for the space $\QM(\,k,\,\ell\,,\,m\,)$ coincides with the one from \cite[p. 64]{DNP}.
 \end{enumerate}

 \noindent
 Since $(\,G,\,K\,)$ is a Hermitian symmetric pair, a suitable choice of a positive definite invariant bilinear form $B$ on 
 the Lie algebra $\frakg$ of $G$ equips $G\,/\,K$ with the structure of an Einstein Hermitian symmetric space. 
 Also $H$ is a closed normal subgroup of $K$ and $K/H$ is a $1$--dimensional torus $\T^1$.
 According to Lemma~\ref{le:Nor_SG} we obtain a family of naturally reductive structures on $G\,/\,H$ parameterized by $s\,>\,-\,1$ and invariant under 
 $\hat G\,:=\,G\,\times\,\T^1$.

 \bigskip
 \noindent
 In the following, let $M\,:=\,G\,/\,H$ be one of the $7$--dimensional homogeneous spaces
 $\VM(\,5,\,2\,)$, $\MM(\,2,\,3\,)$ and $\QM(\,1,\,1,\,1\,)$ defined before.
 Then, $M$ is simply connected and there exists a homogeneous Sasaki--Einstein
 metric $g$ which is bundle--like over the Hermitian symmetric space $G\,/\,K$.
 Therefore, we have $\dim\, KS (\,M,\,g\,)\,=\,2$, and $(\,M,\,g,\,\sigma\,)$ is a homogeneous nearly parallel $\G_2$--space
 where $\sigma$ is an arbitrary element of the projective line $\Proj KS (\,M,\,g\,)$ of nearly parallel $\G_2$--structures cf.~\cite{FKMS}
 (recall that our space $\MM(\,2,\,3\,)$ is the space $\MM(\,3,\,2\,)$ from \cite{FKMS}) .

 Also there are no other homogeneous examples of type 2. Furthermore, we have  $\Isom(\,M,\,g\,)_\circ\,=\,G\,/\,C\,\times\,\T^1$ where $C$ denotes the center of $G$. 
 In accordance with Lemma \ref{le:proper}, it follows that $\Aut(\,M,\,\sigma\,)_\circ\,=\,G\,/\,C$.

 \begin{proposition}\label{p:NR_T2}
  A simply connected homogeneous nearly parallel $\G_2$--space $(\,M\, := \,G\,/\,H,\,g,\,\sigma\,)$ of type 2
  -- where $\sigma\,\in\,\Proj KS\, (\,M,\,g\,)$ is arbitrarily chosen -- is not naturally reductive in 
  the sense of Definition~\ref{de:NR_NP}. Nevertheless, the underlying nearly parallel $\G_2$--metric 
  $g$ is naturally reductive with transvection group $G\,/\,C\,\times\,\T^1$.
 \end{proposition}
 \begin{proof}
  Here, the canonical $\G_2$--connection is not a naturally reductive structure
  by Corollary~\ref{co:proper}. Since $\Aut(\,M,\,\sigma\,)_0\,=\,G$,  this implies that the underlying nearly 
  parallel $\G_2$--metric $g$ is not the normal metric of $G/H$ induced by $B$\,: otherwise 
  $(\,M,\,g,\,\sigma\,)$ would be a normal homogeneous
  nearly parallel $\G_2$--space according to Proposition \ref{p:NR_NP}. 
  Because $g$ is $\hat G$--invariant and bundle--like, and $K$ acts without trivial factor on $T_pM$, 
  $g$ belongs to the family of naturally reductive metrics parameterized by $s\,>\,-\,1$, $s\,\neq\,0$ 
  constructed in Lemma~\ref{le:Nor_SG}.  Hence, the transvection group is $G\,/\,C\,\times\,\T^1$. 
 \end{proof}
 
 \bigskip
 \noindent
 The three homogeneous nearly parallel $\G_2$--spaces considered above 
 are naturally reductive $\S^1$--fiber bundles over symmetric spaces 
 and their torsion forms are given by \eqref{eq:HAT_TAU}.
 Hence, $\tau$ vanishes on the horizontal distribution in the sense $\tau|_{\scrH\times\scrH\times\scrH}\,=\,0$ 
 according to Lemma~\ref{le:T_and_R}. Since $\dim(\,\scrH\,)\,=\,6$,
 it is therefore also immediately clear that $\tau$ can not be a multiple of the vector cross 
 product  described in \eqref{eq:tau} unless $\tau \,=\,0$.

 \paragraph{\bf Homogeneous Nearly Parallel $\G_2$--Spaces of Type 3}
 Here, $(\,M,\,g\,)$ is a $7$--dimensional homogeneous simply connected $3$--Sasakian manifold.
 The Sasakian foliation is regular and the leafs are isomorphic to $\Sp(\,1\,)$ 
 (for a round sphere) or $\SO(\,3\,)$ (otherwise), cf.~\cite[Proposition~1.2.10]{BG1}. 
 The connected component $G$ of the automorphism group of the $3$--Sasakian structure 
 acts transitively on  $(\,M,\,g\,)$. Furthermore, there exists a 
 connected subgroup $K\,\subset\, G\,$ containing the isotropy group $H$ as a normal subgroup 
 such that $K\,=\,H\cdot \Sp(1)$ and $G/K$ is a Wolf space, i.e., a symmetric space 
 with a homogeneous quaternionic K\"ahler structure, cf. \cite[Theorem 3.2.6]{BG1}.
 Here, the characteristic factor $K\,/\,H$ acts by isometries on $(\,M,\,g\,)$ but not by
 automorphisms of the $3$--Sasakian structure, cf. also Remark \ref{re:3--Sasakian}

 \bigskip
 \noindent
 The only  $7$--dimensional simply connected homogeneous $3$--Sasakian 
 manifolds are the round sphere $\S^7\,=\,G/H$ with $G\,:=\,\Sp(\,2\,)$ and $H\,:=\,\Sp(\,1\,)$,
 and the Aloff-Wallach space $\NM(\,1\,,\,1\,)\,=\,G/H$ with $G\,:=\,\SU(\,3\,)$ and $H\,:=\,\U(\,1\,)$ 
 corresponding to the remaining $6$--dimensional generalized flag manifolds $\CP^3$ and $\bbF^3$, respectively.
 For $M\,:=\,\NM(\,1\,,\,1\,)$ with the $3$--Sasakian metric $g$ we have $\dim\, KS (\,M,\,g\,)\,=\,3$,
 and $(\,M,\,g,\,\sigma\,)$ is a homogeneous nearly parallel $\G_2$--space
 where $\sigma$ is an arbitrary element of the projective plane $\Proj KS (\,M,\,g\,)$ of nearly parallel $\G_2$--structures;
 this is the only homogeneous example of type $3$.  The connected component of the isometry group is given by 
 $\SU(\,3\,)\,/\,\Z_3\,\times\, \SO(\,3\,)$,  cf. \cite{WZ}. 
 Consequently, Lemma \ref{le:proper} implies that $\Aut(\,M,\,\sigma\,)_\circ\,\cong\, \SU(\,3\,)\,/\,\Z_3\,\times\,\T^1$, 
 where the embedding $\T^1\,\subset\,\SO(\,3\,)$ depends on the choice of 
 $\sigma\,\in\,\Proj KS(\,M,\,g\,)$, as is also clear from \eqref{eq:AF}.

 \begin{proposition}\label{p:3Sas}
  The Aloff--Wallach space $M\,:=\,\NM(\,1\,,\,1\,)$ with the $3$--Sasakian metric $g$
  and an arbitrary $\sigma\,\in\,\Proj KS\, (\,M,\,g\,)$ is not naturally 
  reductive in the sense of Definition~\ref{de:NR_NP}. 
  Yet $(\,M,\,g\,)$ is a naturally reductive homogeneous space 
  with transvection group $\SU(\,3\,)\,/\,\Z_3\,\times\, \SO(\,3\,)$.
 \end{proposition}

 \begin{proof} 
  Again, $(\,M,\,g,\,\sigma\,)$ is not a naturally reductive nearly parallel
  $\G_2$--space by  Corollary \ref{co:proper}. 
  Therefore, $g$ is not the normal metric of $\SU(\,3\,)/\U(\,1\,)$:
  otherwise  $(\,M,\,g,\,\sigma\,)$ would be a normal homogeneous nearly parallel
  $\G_2$--space according to the first part of Proposition~\ref{p:NR_NP}. 
  Furthermore, the $3$--Sasakian metric $g$ is $\SU(\,3\,)\,\times\, \SO(\,3\,)$--invariant 
  and bundle--like over $\CP^2$. Since the isotropy representation of $\CP^2$
  is irreducible, $g$ belongs necessarily to the family of naturally reductive metrics
  parameterized by $s\,>\,-\,1$, $s\,\neq\,0$ constructed in Lemma~\ref{le:Nor_SG}. 
  In particular, the transvection group is $\SU(\,3\,)\,/\,\Z_3\,\times\, \SO(\,3\,)$.
  \end{proof}

 \bigskip
 \noindent
  By the same argument, it follows that every homogeneous $3$--Sasakian manifold  $(\,M,\,g\,)$ of dimension at least $7$ 
  is a naturally reductive space. Also it is mentioned in \cite[Theorem~3.2.9]{BG1} 
  that $g$ is not the normal metric of $G/H$. Hence, the transvection group is always covered by $G\,\times\,\T^1$.
  The content of Propositions~\ref{p:S7}--\ref{p:3Sas}  is summarized as follows:

 \begin{theorem}
 \label{th:CLASS_G2}
  The following can be said about naturally reductive structures 
  on simply connected homogeneous nearly parallel $\G_2$--spaces:
  \begin{enumerate}
  \item The round sphere $\S^7\,=\,\Spin(\,7\,)\,/\,\G_2$ (with an arbitrary nearly parallel $\G_2$--structure) 
           and the three proper nearly parallel 
           $\G_2$--spaces $\S^7_{\mathrm{squashed}}$, $V_1$ and $V_3$ are 
           standard normal homogeneous nearly parallel $\G_2$--spaces in the sense of Definition~\ref{de:NR_NP}.
  \item The nearly parallel $\G_2$--metrics underlying  simply connected homogeneous nearly parallel 
          $\G_2$--spaces of types 2 and 3 are naturally reductive.
          However, their nearly parallel $\G_2$--structures are not naturally reductive in the sense of Definition~\ref{de:NR_NP}. 
  \item The homogeneous proper nearly parallel $\G_2$--metrics of the Aloff--Wallach spaces
          $\NM(\,k\,,\,l\,)$ different from $\NM(\,1,\,1\,)$ are not naturally
          reductive.
  \end{enumerate}
 \end{theorem}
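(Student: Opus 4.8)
The plan is to assemble the theorem from Propositions \ref{p:EX_NR_NP}--\ref{p:3SS}, organizing the argument according to the type of the nearly parallel $\G_2$--structure as furnished by the classification of simply connected compact homogeneous nearly parallel $\G_2$--spaces in \cite{FKMS}. First I would recall that, apart from the round sphere $(\,\S^7,\,g_{\mathrm{round}}\,)$, whose space of Killing spinors has dimension outside $\{\,1,\,2,\,3\,\}$, every such space carries a well--defined type equal to the dimension of its Killing spinor space, the proper spaces being those of type $1$. This partition is what lets me treat the three assertions separately, the only delicate point being where to place the round sphere.

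For the first assertion I would simply invoke Proposition \ref{p:EX_NR_NP}, which states that the four standard normal homogeneous spaces $\S^7\,=\,\Spin(\,7\,)/\G_2$, $\S^7_{\mathrm{squashed}}$, $V_1$ and $V_3$ are naturally reductive nearly parallel $\G_2$--spaces in the sense of Definition \ref{de:NR_NP}; the latter three are of type $1$ while the round sphere is the exceptional case. It is essential here that the round sphere be handled by this proposition and not by the type--$3$ analysis below, so that the positive statement of Part $(1)$ and the negative statement of Part $(2)$ remain consistent.

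For the second assertion I would split into types $2$ and $3$. By \cite{FKMS} the type--$2$ spaces are exactly the $\S^1$--bundles $\VM(\,5\,,\,2\,)$, $\MM(\,3\,,\,2\,)$ and $\QM(\,1\,,\,1\,,\,1\,)$, and Proposition \ref{p:NR_T2} shows for each that the metric is naturally reductive with transvection group $G\times\T^1$ whereas $\nabla\,-\,\frac{\tau_\circ}{12}\,\sigma$ is not an Ambrose--Singer connection, so the nearly parallel structure is not naturally reductive. The type--$3$ spaces are the homogeneous $3$--Sasakian manifolds, which besides the round sphere reduce to $\NM(\,1\,,\,1\,)$; Proposition \ref{p:3SS} gives that its metric is naturally reductive with group $\SU(\,3\,)\times\SU(\,2\,)$ but that its nearly parallel structure is not naturally reductive. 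Combining the two cases yields Part $(2)$. The third assertion is then precisely Proposition \ref{p:AW}, applied to the type--$1$ Aloff--Wallach spaces $\NM(\,k\,,\,\ell\,)$ with $k\,>\,\ell\,\geq\,1$, whose nearly parallel metrics are not even naturally reductive as Riemannian manifolds.

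I expect the main obstacle to be one of bookkeeping rather than of new mathematics: one must confirm, via the \cite{FKMS} list, that the enumerated spaces genuinely exhaust all simply connected compact homogeneous nearly parallel $\G_2$--spaces, so that no case is left uncovered, and one must keep careful track of the round sphere --- the unique space at which the type decomposition and the positive--curvature case of Part $(1)$ overlap, and whose correct placement guarantees the internal consistency of the three assertions.
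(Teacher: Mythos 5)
Your proposal is correct and follows essentially the same route as the paper, which states Theorem~\ref{th:CLASS_G2} explicitly as a summary of Propositions~\ref{p:EX_NR_NP}--\ref{p:3SS}, organized by the type (Killing spinor dimension) from the classification in~\cite{FKMS}. Your careful placement of the round sphere under Part~(1) via Proposition~\ref{p:EX_NR_NP} rather than under the type--$3$ case is exactly the bookkeeping the paper relies on.
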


 \noindent
  In particular, by Corollary \ref{co:NR_NP}, the torsion form of a $7$--dimensional compact simply connected 
 naturally reductive space $(\,M,\,g,\,\bar\nabla\,)$ is a non--zero multiple of
 a vector cross product if and only if $(\,M,\,g,\,\bar\nabla\,)$ is the round $7$--sphere
 $\Spin(\,7\,)\,/\,\G_2$ or one of the three standard normal homogeneous 
 spaces $\S^7_{\mathrm{squashed}}$,  $V_1$ and $V_3$ in accordance 
 with Theorem \ref{th:GVCP}. By Corollary \ref{co:proper}, the latter 
 three spaces are also the only simply connected compact proper nearly parallel $\G_2$--spaces 
 whose underlying metric tensors are naturally reductive.

\subsection{Comparison with the Results from~\cite{St1,St2,St3}}
\label{se:SP}
 The principal goal of this section is to classify the 
 naturally reductive spaces $(\,M,\,g,\,\bar\nabla\,)$ whose torsion form is 
 a non--zero multiple of a vector cross product. We will indicate how this can be done
 independently of the results from~\cite{FKMS}. For this purpose we start
 from the list of simply connected naturally reductive spaces in dimension seven 
given in \cite{St3}. 

 \begin{lemma}(\cite[Theorem 4.4]{St3})
 \label{le:storm}
  Let $(\,\hat G,\,\hat K,\hat B\,)$ be an effective naturally reductive triple 
  resulting in a $7$--dimensional compact simply connected
  irreducible naturally reductive space $\hat M^7\,=\,\hat G/\hat K$ whose transvection
  algebra $\hat\frakg$ is not  semisimple. Then, there exists an effective naturally 
  reductive triple $(\,G,\, K, B\,)$ whose Lie algebra $\frakg$ is semisimple with $B$ positive definite, 
  and a closed normal subgroup $H\,\subset\,K$ such that
  $\T^1\,:=\,K/H$ is a $1$--dimensional torus, so that $(\,\hat G,\,\hat K,\hat B\,)$ 
  belongs to the family of naturally reductive triples parameterized by $s\,>\,-\,1$, $s\,\neq\,0$ 
  constructed in the first part of Lemma \ref{le:Nor_SG}.
 \end{lemma}
 \begin{proof}
  According to \cite[Table~7]{St3} there are six relevant compact base spaces
  $G/K$ of semisimple type, where always $\rank\;\frakk\,=\,\rank\;\frakg$ holds. 
  Hence, every maximal torus of $\frakk$ is also maximal in $\frakg$.
  Also the corresponding non--degenerate bilinear form $B$ on $\frakg$ is positive definite in each case.
  Therefore, the discussion from \cite[p.~541 ff.]{St1} shows that there exists some
  element in the center of $\frakk$ whose orthogonal complement
  integrates  into a closed normal subgroup $H\subset K$ thus providing
  precisely the family of naturally reductive structures parameterized by $s\,>\,-1$ 
  constructed in the first part of Lemma \ref{le:Nor_SG}.
 \end{proof} 

 \bigskip
 \noindent
  The corresponding six homogeneous quotients $G\,/\,H$ are: $\VM(\,5,\,2\,)$, $\MM(\,k,\,l\,)$, $\QM(\,k,\,\ell,\,m\,)$ 
  (as defined in the previous section), the Aloff--Wallach spaces $\NM(\,k,\,l\,)\,=\,\SU(\,3\,)\,/\,\U(\,1\,)$,
  and the spheres $\S^7\,=\,\SU(\,4\,)\,/\,\SU(\,3\,)$ and $\S^7\,=\,\Sp(\,2\,)\,/\,\Sp(\,1\,)$ each equipped with 
  the family of $\hat G\,:=\,G\,\times\,\T^1$--invariant naturally reductive structures over the $6$--dimensional base space $G\,/\,K$ parameterized by $s\,>\,-1$ constructed in Lemma \ref{le:Nor_SG}.
  This family contains, by definition for $s\,=\,0$, also the normal homogeneous structure induced by $B$ on $G\,/\,H$. Therefore, all six homogeneous
  quotients $G\,/\,H$ appear in both Tables 5 and 7 of \cite{St3}. 

 \begin{proposition}\label{p:semi-simple}
  None of the torsion tensors of the families of naturally reductive structures 
  on the previously mentioned six homogeneous quotients $G\,/\,H$ is a generalized vector cross product.
 \end{proposition}
 \begin{proof}
  As in Lemma \ref{le:T_and_R} let $\hat\tau$ denote
  the torsion form associated with the naturally reductive triples $(\,\hat G,\,\hat K,\,\hat B\,)$ ($s\,\neq\,0$) or $(\,G,\,H,\,B\,)$ ($s\,=\,0$).
  Let us assume for a contradiction that $\hat\tau\,=\,- \,\frac{\tau_0}6\,\sigma$
  where $\sigma$ is a $7$--dimensional vector cross product, cf. Corollary \ref{co:NR_NP}. 
  Also let $\tau$ denote the torsion form  associated with $(\,G,\,K,\,B\,)$ and $\rho_*$ be the linearized isotropy representation of $\frakk$.
  By \eqref{eq:HAT_TAU}, there exists $Z\,\in\,\frakh^\perp\,\subset\,\frakk$ and a vertical unit vector $E_7$ such that 
  $\hat\tau\,=\, \tau\,+\,\rho_*(Z)\,\wedge\,dE_7$. Comparison with~\eqref{eq:sigma} and~\eqref{eq:tau} results in:
  \begin{eqnarray*}
   \tau
   &=&
   -\;\frac{\tau_\circ}{6}\big(\;dE_1\wedge dE_3\wedge dE_5
   \,-\,dE_1\wedge dE_4\wedge dE_6\\
   &&\hphantom \ \ \ \ \,-\,dE_2\wedge dE_3\wedge dE_6
   \,-\,dE_2\wedge dE_4\wedge dE_5\;\big)
  \\
  \rho_*(Z)
  &=&
  -\;\frac{\tau_\circ}{6}\big(\;dE_1\,\wedge\,dE_2\;+\;dE_3\,\wedge\,dE_4
          \;+\;dE_5\,\wedge\,dE_6\;\big)
  \end{eqnarray*}
  Hence,  $J\,:=\,\frac{6}{\tau_\circ}\,\rho_*(Z)$ together with $\Psi^+\,:=\,\tau$ defines an $\SU(\,3\,)$--structure on $G/K$. 
  In particular, $\tau$ is a generalized vector cross product, and hence $G/K$ is a naturally reductive 
  nearly K\"ahler manifold $(\,M,\,g,\,J\,)$ according to Proposition 
  \ref{p:NK}. Moreover, $J$ lies in the image 
  of the linearized isotropy representation of $\frakk$. This implies that $J\,\star\,\tau\,=\,0$
  since the torsion form of a naturally reductive space is invariant under
  the isotropy action.  On the other hand, $\tau$ is a non--vanishing  $(3,0)\,+\,(0,3)$ form; 
  thus $J\,\star\,\tau(X,Y,Z)\, = \,-\,3\,\tau(JX,Y,Z)$. Therefore, $J\,\star\,\tau$ does not vanish, a contradiction.
 \end{proof}
 
 \bigskip
 \noindent
 Lemma \ref{le:storm} and Proposition \ref{p:semi-simple} together imply:

 \begin{corollary}\label{co:semi-simple}
  The transvection algebra of a naturally reductive nearly parallel $\G_2$--space is semisimple.
 \end{corollary}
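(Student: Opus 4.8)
The plan is to argue by contradiction using Lemma~\ref{le:storm} together with the explicit shape of the torsion form in the fibre bundle construction of Lemma~\ref{le:TT}. So suppose that $\hat\frakg$ is \emph{not} semisimple. Then Lemma~\ref{le:storm} presents $\hat M^7$ as the total space of a homogeneous $\S^1=K/H$--bundle over a six--dimensional naturally reductive space $M^6=G/K$ with semisimple transvection algebra $\frakg$, realized through the construction of Lemma~\ref{le:NSG}. In particular $\dim K/H=1$, so Lemma~\ref{le:TT} applies: writing $\scrH$ and $\scrV=\R\,v_0$ for the horizontal and vertical distributions, the torsion form reads $\hat\tau=\tau+\frac1{\sqrt{|1+s|}}\,\rho_*(Z_0)\wedge v_0^\flat$, where $\tau$ is the horizontal torsion form of $M^6$ and $\rho_*$ is the linearized isotropy representation of $K$.

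Next I would bring in the nearly parallel hypothesis. Since $\hat M^7$ is a naturally reductive nearly parallel $\G_2$--space, Corollary~\ref{co:NR_NP} forces $\hat\tau=c\,\sigma$ for a genuine vector cross product $\sigma$ and a constant $c\neq0$. The first key step is to restrict this identity of $3$--forms to $\scrH$: because $v_0^\flat$ annihilates $\scrH$, the extra term drops out and one gets $\tau=c\,\sigma|_\scrH$. As $\scrH=v_0^\perp$ and $\sigma$ is a vector cross product, its restriction to the hyperplane $\scrH$ is (up to isometry) the real part of a complex volume form, i.e. an $\SU(3)$--type generalized vector cross product as in equation~\eqref{eq:tau}; in particular $\tau\neq0$, so $M^6$ is not symmetric. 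Proposition~\ref{p:nearly_Kaehler} then identifies $M^6$ as a six--dimensional naturally reductive strict nearly K\"ahler manifold, with nearly K\"ahler complex structure $J$, and by the classification of such spaces $M^6$ is one of $\S^6=\G_2/\SU(3)$, $\bbF^3=\SU(3)/(\U(1)\times\U(1))$, $\CP^3=\Sp(2)/(\Sp(1)\times\U(1))$ or $\S^3\times\S^3=\SU(2)\times\SU(2)\times\SU(2)/\SU(2)$.

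The second key step is to contract the same identity with $v_0$. Since $\tau$ is horizontal one finds $\frac1{\sqrt{|1+s|}}\,\langle\rho_*(Z_0)\,\cdot\,,\,\cdot\,\rangle=c\,(v_0\lrcorner\sigma)=c\,\omega_J$ on $\scrH$, where $\omega_J$ denotes the K\"ahler form of $J$; hence $\rho_*(Z_0)|_\scrH$ must be a nonzero multiple of $J$, and in particular a skew--symmetric endomorphism whose square is a negative multiple of the identity. The main obstacle, and the real content of the proof, is to show this is impossible. The vector $Z_0$ spans the one--dimensional ideal $\frakh^\perp$, which is central in $\frakk$, so $\rho_*(Z_0)\in\rho_*(\frakz(\frakk))$, and the claim is that $J\notin\rho_*(\frakz(\frakk))$ in each case. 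For $\S^6$ and $\S^3\times\S^3$ the isotropy algebra $\frakk$ is simple, so $\frakz(\frakk)=0$ and there is not even a one--dimensional ideal $\frakh^\perp$ available. For $\bbF^3$ the three isotropy weights satisfy the linear relation that the largest is the sum of the other two, so no element of $\frakk=\u(1)\oplus\u(1)$ can act with all eigenvalues of equal modulus; and for $\CP^3$ the generator of the unique one--dimensional ideal $\u(1)$ acts on $\scrH$ with two distinct eigenvalue magnitudes. In every case $\rho_*(Z_0)|_\scrH$ fails to be proportional to a complex structure, contradicting the second step, so $\hat\frakg$ must be semisimple. I expect the weight computations for $\bbF^3$ and $\CP^3$ to be the only genuinely computational part, with everything else following formally from the cited results.
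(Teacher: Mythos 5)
Your proof is correct, and its skeleton up to the last step is exactly the paper's: contradiction via Lemma~\ref{le:storm}, then Lemma~\ref{le:TT} and Corollary~\ref{co:NR_NP} to write $\hat\tau=\tau+\frac1{\sqrt{|1+s|}}\,\rho_*(Z_0)\wedge v_0^\flat$ as a nonzero multiple of a vector cross product, then restriction to $\scrH$ and contraction with $v_0$ to conclude that $\tau$ is a generalized vector cross product --- so $M^6$ is naturally reductive strict nearly K\"ahler by Proposition~\ref{p:nearly_Kaehler} --- and that $\rho_*(Z_0)|_\scrH$ is a nonzero multiple of the complex structure $J$. Where you diverge is the final contradiction. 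The paper's is uniform and classification--free: $Z_0$ lies in the isotropy algebra, so $J=\rho_*(Z_0)$ must annihilate the isotropy--invariant tensor $\tau$, i.e.\ $J\star\tau=0$; but $\tau$ is a nonvanishing $(3,0)+(0,3)$--form, whence $J\star\tau=-3\,\tau(J\,\cdot\,,\cdot\,,\cdot\,)\neq0$. You instead invoke Butruille's classification and verify case by case that no element of a one--dimensional ideal of $\frakk$ can act on $\scrH$ with all eigenvalues of equal nonzero modulus: simplicity of $\frakk$ rules out $\G_2/\SU(3)$ and $\SU(2)^3/\SU(2)$ before the eigenvalue question even arises, and for $\bbF^3$ and $\CP^3$ the isotropy weights have the shape $\alpha,\,\beta,\,\alpha+\beta$, which is incompatible with $|\alpha(Z)|=|\beta(Z)|=|(\alpha+\beta)(Z)|\neq0$. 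Both routes are sound; yours costs the classification plus two small weight computations but makes the obstruction concrete in each model, while the paper's two--line argument needs nothing beyond the $(3,0)+(0,3)$ type of $\tau$ and so is preferable if you want the proof self--contained relative to Proposition~\ref{p:nearly_Kaehler} alone. A minor simplification of your own argument: you never need to match $\sigma_{v_0}|_\scrH$ with the nearly K\"ahler $J$ --- it suffices that $\rho_*(Z_0)|_\scrH$ be a nonzero multiple of \emph{some} orthogonal complex structure, which is all your eigenvalue count actually uses.
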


 \bigskip
 \noindent
 Because of Corollary \ref{co:semi-simple}, we are left with eleven types of 
 $7$--dimensional simply connected naturally reductive quotients $G\,/\,H$ of semisimple type cf. 
\cite[Table 5]{St3}. Six of them can be thrown out again by Proposition \ref{p:semi-simple}.
 The remaining five possibilities for $G\,/\,H$ are $\S^7\,=\,\SO(\,8\,)/\SO(\,7\,)$ and the four 
 homogeneous quotients underlying the four standard normal homogeneous 
 nearly parallel $\G_2$--spaces mentioned in the first part of Theorem \ref{th:CLASS_G2}. 
 The round sphere $\S^7\,=\,\SO(\,8\,)/\SO(\,7\,)$
 is  a symmetric space and can be discarded. It remains to consider the families
 of naturally reductive structures on $\S^7$ and $\NM(\,1,\,1\,)$ with transvection algebras
 $\sp(\,2\,)\,\times\, \sp(\,1\,)$ and $\su(\,3\,)\,\times\,\so(\,3\,)$, respectively.
 The corresponding naturally reductive metrics are part of the canonical variations of the
 $3$--Sasakian metrics on $\S^7$ and $\NM(\,1,\,1\,)$, respectively, obtained by constant rescaling along the vertical bundle
 of the Riemannian submersion $f\,:\,G\,/\,H\,\longrightarrow\, G\,/\,K$ as described in
 Examples \ref{ex:Wi} and \ref{ex:Je}, cf. also \cite[Chpt. 9\,G]{Bes}. Furthermore, it is easy 
 to see that $f$ has totally geodesic fibers. Therefore, it is known that the canonical variation
 contains at most two Einstein metrics, cf. \cite[Proposition 9.72]{Bes}. Moreover, 
 in each case we already know two Einstein  metrics:  Both the $3$--Sasakian round metric 
 and the standard normal "squashed" metric of $\S^7$ are Einstein. Also the 
 $3$--Sasakian metric and the standard normal Wilking metric of  $V_3\,=\,\NM(\,1,\,1\,)$ 
 both are Einstein.  On the other hand, if the torsion tensor of a $7$--dimensional 
 naturally reductive space is a non--zero multiple of a vector cross product, then
 the naturally reductive metric is nearly parallel $\G_2$ according to Lemma \ref{le:AS}
 and hence Einstein. Therefore, the only further candidates are the two $3$--Sasakian metrics. 
 A naturally reductive structure with transvection algebra $\sp(\,2\,)\,\times\, \sp(\,1\,)$ can not be the canonical $\G_2$--connection 
 of a nearly parallel $\G_2$--structure  of the round sphere $\S^7$ by Proposition \ref{p:S7}. 
 It remains to consider the $3$--Sasakian  Aloff--Wallach space $\NM(\,1,\,1\,)$. 
 This is a nearly parallel $\G_2$--space of type 3, and hence  none of its nearly parallel 
 $\G_2$--structures is naturally reductive according to Corollary \ref{co:proper}, cf. also
 Remark \ref{re:3--Sasakian}. In Appendix~\ref{se:N11} we will verify by explicit calculations that, 
 in fact, the torsion form of a naturally reductive structure on $\NM(\,1,\,1\,)$
 with transvection algebra $\su(\,3\,)\,\times\,\so(\,3\,)$ is a non--zero multiple of a 
 vector cross product if and only if it is the standard normal homogeneous structure 
 of $\SU(\,3\,)\,\times\,\SO(\,3\,)\,/\,\U(\,2\,)$.  Thus, we remain again with the four 
 $7$--dimensional  simply connected standard normal 
 homogeneous spaces mentioned in the first part of Theorem \ref{th:CLASS_G2}.

\section{Proof of Theorem~\ref{th:LJR}}
\label{se:th:LJR}
 At the end of this section we will present the proof of Theorem~\ref{th:LJR}. 
 First, we need to recall some well--known facts from linear algebra.
 The standard decomposition of a skew--symmetric endomorphism $\tau$ on some euclidean 
 vector space $V$ is described as follows: There exist real numbers $\lambda_\ell\,>\,0$ ($\ell\,=\,1,\,2,\,\ldots$)
 with $\lambda_k\neq \lambda_\ell$, an  orthogonal decomposition $V\,=\, \bigoplus_{\ell \geq 0} V_\ell$ and Hermitian 
 complex structures $J_\ell$ on $ V_\ell$ for $\ell\, \geq \, 1$ such that $\tau\,=\,
 \bigoplus_{\ell\geq 1} \lambda_\ell\,J_\ell$. Thus, $V_0$ is the kernel of $\tau$, 
 $V_\ell$ is the $\lambda_\ell^2$--eigenspace of $-\tau^2$ for $\ell\,\geq\,1$, and the different
 non--zero  eigenvalues of $\tau$ are the complex conjugated pairs 
 $\pm\,i\,\lambda_\ell$. Also for each $X\in V$ there exists a 
 unique normalized polynomial  $P(\,\lambda\,)$ of smallest degree such 
 that $P(\,\tau\,)\,X\,=\,0$: In fact let $X_\ell$ denote the projection of $X$ onto
 $V_\ell$ for all $\ell\,\geq\,0$. Then, $P(\,\lambda\,)$ is the product of 
 all quadratic polynomials $\lambda^2\,+\,
 \lambda_\ell\,^2$ such that $X_\ell\,\neq \,0$ for $\ell\,\geq\,1$
 multiplied by $\lambda$ if $X_0\,\neq\,0$. In other words $P$ is the minimal 
 polynomial of $\tau$ restricted to the $\tau$--invariant subspace of $V$ 
 generated by $X$. For a generic vector $X$ we have $X_\ell\,\neq \,0$ for all 
 $\ell$ and $P$ coincides with the minimal polynomial of $\tau$.
 
 \begin{lemma}\label{le:U}
  Let $V$ be a euclidean space, $\tau\,:\,S\longrightarrow\so(\,V\,), \ s\longmapsto \tau(\,s\,)$,
  and $X\,:\,S\longrightarrow V,\ s\longmapsto X(\,s\,)$ be real analytic maps defined on
  some connected real analytic space $S$. There exists a polynomial $P(\,\lambda\,)$ such 
  that $P(\,\tau\,)\,X\,\equiv\,0$ if and only if for some non--empty, open and connected 
  subset $U$ of $S$ where the different  non--zero eigenvalues $0\,<\,\lambda_1^2(\,s\,)\, < 
  \,\lambda_2^2(\,s\,)\,<\,\cdots$ of $-\tau^2(\,s\,)$ are well defined continuous functions
  in the variable $s\in U$, the following condition holds for all $\ell\,\geq\,1$:
  $$
  X_\ell
   \;\;\equiv\;\;0
   \qquad\text{\bf or}\qquad
   \lambda_\ell \;\;\equiv\;\;\text{\rm const}
  $$
  In the affirmative case, let $P(\,\lambda\,)$ be the product of the quadratic factors
  $\lambda^2\,+\,\lambda_\ell^2$ such that $X_\ell\,\not\,\equiv\,0$, multiplied 
  by $\lambda$ if $X_0\,\not\equiv\,0$. This defines a polynomial
  $P(\,\lambda\,)$ such that $P(\,\tau\,)\,X \equiv\,0$. In addition, $P$ is the polynomial of smallest degree 
  with this property.
 \end{lemma}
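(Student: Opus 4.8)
The plan is to reduce the statement to a pointwise analysis on the eigenspaces of $-A^2$, using the standard decomposition $A=\bigoplus_{\ell\ge 0}\lambda_\ell J_\ell$ recalled above, and then to pass between the local set $U$ and all of $S$ by real analyticity. First I would fix a non-empty open connected $U\subset S$ lying in the regular locus where the number of distinct non-zero eigenvalues of $-A^2(s)$ is locally constant; on such a set the branches $\lambda_\ell^2(s)$ and the spectral projections $\pi_\ell$ (expressible as polynomial interpolation expressions in $A^2(s)$) are real analytic. Since $A(s)$ commutes with $A^2(s)$, each eigenspace $V_\ell(s)$ is $A(s)$-invariant with $A|_{V_\ell}=\lambda_\ell J_\ell$ for the Hermitian complex structure $J_\ell$.

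The \emph{crux} is an elementary observation on a single $V_\ell$: since $A^2+\lambda_\ell^2=0$ there and, for any $v\neq 0$, the vectors $v$ and $Av=\lambda_\ell J_\ell v$ are orthogonal hence linearly independent, a polynomial $\tilde P$ satisfies $\tilde P(A)v=0$ if and only if the degree-$\le 1$ remainder of $\tilde P$ modulo $\lambda^2+\lambda_\ell^2$ vanishes, i.e. if and only if $(\lambda^2+\lambda_\ell^2)\mid\tilde P$. For the forward direction, assume $P(A)X\equiv 0$. As $A$ preserves each $V_\ell$, projecting gives $P(A)\pi_\ell(X)=0$ on $U$; on the open dense subset where $\pi_\ell(X)\neq 0$ (non-empty when $\pi_\ell(X)\not\equiv 0$, by analyticity) the crux forces $\lambda_\ell(s)$ to be a root of the fixed polynomial $P$. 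A continuous function on the connected set $U$ taking values in a finite set on a dense subset is constant, so $\lambda_\ell$ is constant, which is exactly the dichotomy.

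Conversely, given the dichotomy on $U$, set $Q(\lambda):=\prod_{\ell:\,\pi_\ell(X)\not\equiv 0}(\lambda^2+\lambda_\ell^2)$, a genuine polynomial with constant coefficients precisely because each surviving $\lambda_\ell$ is constant, and put $P:=\lambda\,Q$. Decomposing $X=\pi_0(X)+\sum_\ell\pi_\ell(X)$ on $U$, the factor $\lambda$ annihilates $\pi_0(X)\in\ker A$, each surviving factor $\lambda^2+\lambda_\ell^2$ annihilates $\pi_\ell(X)\in V_\ell$, and the terms with $\pi_\ell(X)\equiv 0$ contribute nothing, so $P(A)X\equiv 0$ on $U$; since $P(A)X$ is real analytic and $S$ is connected, the identity extends to all of $S$. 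For minimality, any annihilator $\tilde P$ is divisible by each distinct coprime factor $\lambda^2+\lambda_\ell^2$ with $\pi_\ell(X)\not\equiv 0$, hence by $Q$; and since $Q(A)X=Q(0)\,\pi_0(X)$ with $Q(0)=\prod\lambda_\ell^2\neq 0$, we have $Q(A)X\equiv 0$ exactly when $\pi_0(X)\equiv 0$. When $\pi_0(X)\not\equiv 0$ one further needs $\tilde P(0)=0$, i.e. $\lambda\mid\tilde P$, and coprimality of $\lambda$ and $Q$ gives $P=\lambda Q\mid\tilde P$, so $P$ is minimal; when $Q(A)X\equiv 0$ the smaller polynomial $Q$ already annihilates and is itself minimal.

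The genuinely delicate points — and the main obstacle — are the analytic regularity of the eigenvalue branches and spectral projections on the regular locus $U$, the propagation argument upgrading \emph{finitely many values on a dense subset plus continuity on a connected set} to \emph{globally constant}, and the real-analytic continuation of $P(A)X\equiv 0$ from $U$ to all of $S$; the purely linear-algebraic crux on each $V_\ell$ is then routine.
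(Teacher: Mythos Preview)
Your proof is correct and follows essentially the same route as the paper's: both argue via the spectral decomposition of $A(s)$, use that $\pi_\ell(X)\neq 0$ forces $(\lambda^2+\lambda_\ell^2)\mid P$ so that $\lambda_\ell$ lands in the finite zero set of $P$ and is therefore constant by continuity on a connected set, and then extend $P(A)X\equiv 0$ from $U$ to $S$ by real analyticity. Your treatment of the minimality claim is in fact more explicit than the paper's (which simply says the last assertion ``is clear by construction''), and your construction of $Q$ from only the indices with $\pi_\ell(X)\not\equiv 0$ matches the lemma statement more closely than the paper's sufficiency argument, which uses all constant $\lambda_\ell$.
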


 \begin{proof}
 For sufficiency, let $I\,=\,\{\ell_1,\,\ell_2,\,\ldots,\,\ell_d\}\,\subset\{\,1,\,2,\ldots\,\}$ denote
 those indices such that $\lambda_{\ell_ k}$ is constant for $k\,=\,
 1,\,\ldots,\,d$. By assumption, $X_\ell\,\equiv\,0$ for all $\ell\,\geq\,1$ and
 $\ell\,\notin\,I$. Therefore, $P(\,\tau\,)\,X\,\equiv\,0$ on $U$ where
 $P(\,\lambda\,)\,:=\,\lambda(\,\lambda^2\,+\,\lambda_{\ell_1}^2\,)\,\cdots\,
 (\,\lambda^2\,+\,\lambda_{\ell_d}^2\,)$.
 Since $(\,\tau,\,X\,)$ is real analytic, we obtain that  $P(\tau)\,X\,\equiv\,0$ on $S$. 

 \bigskip
 \noindent
 For necessity, suppose that there exists a polynomial $P(\,\lambda\,)\,$ such
 that $P(\,\tau\,)\,X\,\equiv\,0$. Let $U$ be an open and connected subset of
 $S$ where the different non--zero eigenvalues $\lambda_\ell^2(\,s\,)$ of $-\tau^2(\,s\,)$ are
 continuous functions. Let $I\,=\,\{\ell_1,\,\ell_2,\,\ldots,\,\ell_d\}\,\subset\{\,0,\,1,\,2,\ldots\,\}$ 
 denote those indices with $X_{\ell_k}\,\equiv\,0$ on $U$ for
 $k\,=\,1,\,\ldots,\,d$. Suppose $\ell\,\geq\,1$ and $\ell\notin I$. 
 We will show that  the corresponding eigenvalue $\lambda_\ell$ is constant:
 By choice of $\ell$, we have $X_\ell\,\not\equiv\,0$. Since $X_\ell$ is a continuous map on $S$,  
 we can assume that $X_\ell(\,s\,)\,\neq\,0$ for all $s\in U$. Hence, the quadratic factor
 $\lambda^2\,+\,\lambda^2_{\ell}(\,s\,)$ divides $P(\,\lambda\,)$
 for all $s\,\in\,U$, put differently $\pm\,i\,\lambda_\ell(\,s\,)$ belongs to the
 finite set of zeros of $P$. Because $\lambda_\ell(\,s\,)$ is a continuous
 function, we see that it actually does not depend on $s$, i.e.,
 $\lambda_{\ell}$ is constant. The other assertions are straightforward. 
 \end{proof}

 \bigskip
 \noindent
 In order to obtain the decomposition of the space $\End^{\mathrm{sym}}\,V$
 of symmetric endomorphisms of $V$ with respect to the action
 $\tau\,\star\,\K\,:= \,[\,\tau,\,\K\,]$, recall that on the one hand $V_{\geq 1}\,:=
 \, \bigoplus_{\ell\geq 1} V_\ell$ is a Hermitian vector space with complex structure 
 $J_\tau\,:=\,\bigoplus_{\ell\geq 1}\frac{1}{\lambda_\ell} \tau|_{V_\ell}$.
 Set $\K_{(1,1)}\,:=\,\frac12(\,\K  \,-\,J_\tau\,\circ\,\K \,\circ\, J_\tau\,)$ and
 $\K_{(2,0)+(0,2)}\,:=\,\frac12(\,\K\,+\,  J_\tau\,\circ\K\,\circ\, J_\tau\,)$  for all 
 $\K\in \End^{\mathrm{sym}}\,V_{\geq 1}$.  Then, $\tau\,\circ\, \K_{(2,0)+(0,2)}\, 
 = \,-\,\K_{(2,0)+(0,2)}\circ \tau$   and $\tau\,\circ\,\K_{(1,1)}\,=\,\K_{(1,1)}  \circ \tau$ on $V_{\geq 1}$. 
 On the other hand, let $\K^{k,\ell}$ denote the symmetric endomorphism of $V$ defined by
 \begin{equation}\label{eq:ID1}
  \langle \K^{k,\ell}\,Y,\, Z\,\rangle
  \;\;:=\;\;
  \langle \K\,Y_k,\,Z_\ell\rangle\,
  \;+\; \langle \K\,Y_\ell,\,Z_k\rangle\,
  \end{equation}
 for all $\,Y,\,Z\,\in\,\, V$ and $0\leq k < \ell$ and put
  \begin{equation}\label{eq:ID2}
  \langle \K^{k,k}\,Y,\, Z\,\rangle
  \;\;:=\;\;
  \langle \K\,Y_k,\,Z_k\,\rangle\, 
 \end{equation}
 for all $\,Y,\,Z\,\,\in\,V$ and $k\geq 0$. Then, $\K^{k,\ell}\in \End^{\mathrm{sym}}\,V_{\geq 1}$ for all $1\leq k \leq \ell$ and
 \begin{equation}\label{eq:ID3}
  \K
  \;\;=\;\;
   \underset{0\leq k}\oplus \K^{0,k} \underset{1\leq k\leq \ell}\oplus\K^{k,\ell}_{(1,1)}\oplus  \K^{k,\ell}_{(2,0)+(0,2)}
 \end{equation}
 as well as:
  \begin{eqnarray}
  \label{eq:ID4}
   -\,(\,\tau\,\star\,)^2 \K^{0,k}
   &=&
   \lambda_k^2\, \K^{0,k}
   \\[5pt]
  \label{eq:ID5}
   -\,(\,\tau\,\star\,)^2 \K^{k,\ell}_{(1,1)}
   &=&
   (\,\lambda_\ell\, -\, \lambda_k\,)^2 \K^{k,\ell}_{(1,1)}
   \\[5pt]
   \label{eq:ID6}
   -\,(\,\tau\,\star\,)^2 \K^{k,\ell}_{(2,0)+(0,2)}
   &=&
   (\,\lambda_\ell\, +\, \lambda_k\,)^2 \K^{k,\ell}_{(2,0)+(0,2)}
  \end{eqnarray}
 
 \bigskip
 \noindent
 Let $(\,M,\,g,\,\bar\nabla\,)$ be a naturally reductive space with
 torsion form $\tau$ and symmetrized Riemannian curvature tensor $\K$.
 Recall that the latter is a polynomial $X\,\longmapsto\,\K(\,X\,)$ of degree two 
 with values in $\End^{\mathrm{sym}}\,TM$. Also $\tau$ defines a $1$--form 
 $X\,\longmapsto\,\tau_X$ on $TM$ with values in $\End^\mathrm{skew}\,TM$.
 Then, the previous construction can be transferred in the pointwise sense, i.e.,
 with $\tau\,:=\,\tau_X$ and $\K\,:=\,\K(\,X\,)$. In particular, 
 Equations \eqref{eq:ID1}--\eqref{eq:ID3} remain valid via this interpretation.
 Furthermore, let $p\in M$, set $T\,:=\,T_pM$ and let $\S(T)$ denote the unit--sphere 
 of $T$. Given some non--empty open connected subset $U\,\subset\,\S(T)$
 where the non--zero eigenvalues $0\,<\,\lambda_1^2(\,X\,)\,< 
 \,\lambda_2^2(\,X\,)\,<\,\cdots$  of $-\,\tau_X^2$ 
 are well defined continuous functions in the variable $X\in U$, also Equations 
 \eqref{eq:ID4}--\eqref{eq:ID6} remain valid with a similar meaning.

 \begin{proposition}\label{p:EXIST_LJR}
  There exists a linear Jacobi relation if and only if the following three conditions
  together hold for $1\,\leq\, k\, < \,\ell$:
  \begin{itemize}
   \item $\lambda_k$ is constant,
         or $\K^{k,k}_{(2,0)+(0,2)}\,\equiv\,\K^{0,k}\,\equiv\,0$;
   \item $\lambda_k\,-\,\lambda_\ell$ is constant,
         or $\K^{k,\ell}_{(1,1)}\,\equiv\, 0$;
   \item $\lambda_k\,+\,\lambda_\ell$ is constant,
         or $\K^{k,\ell}_{(2,0)+(0,2)}\,\equiv\, 0$.
  \end{itemize}
  In case these conditions hold, let $Q(\,\lambda\,)$ be the least common multiple of the irreducible
  factors $\lambda^2\,+\, \frac14\, (\lambda_k\, - \, \lambda_\ell)^2$ such
  that $\K^{k,\ell}_{(1,1)}\,\not\equiv\,0$ ($1\leq k\,<\, \ell$),
  $\lambda^2\,+\,\frac14\,(\lambda_k\,+\,\lambda_\ell)^2$ such that
  $\K^{k,\ell}_{(2,0)+(0,2)}\,\not\equiv\, 0$ ($1\,\leq\, k\, \leq\, \ell$)
  and $\lambda^2\,+\,\frac14\,\lambda_k^2$ such that $\K^{0,k}\,\not\equiv\,0$
  ($1\leq k$). Then, $P(\,\lambda\,)\,:=\,\lambda\,Q(\,\lambda\,)$ defines the
  linear Jacobi relation $P(\,\scrT\,)\,\K\,\equiv\,0$ in accordance with
  Corollary~\ref{co:JR}. In addition, the only  possibility for a linear Jacobi relation of strictly 
  smaller degree is $Q(\,\scrT\,)\,\K\,\equiv\,0$.
 \end{proposition}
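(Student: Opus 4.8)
The plan is to deduce the proposition directly from Lemma~\ref{le:U} applied to the pair $(\scrT,\K)$ over the unit sphere $\S(T_pM)$, with Corollary~\ref{co:JR} supplying the bridge to linear Jacobi relations. By Corollary~\ref{co:JR} a linear Jacobi relation exists if and only if there is a polynomial $P(\lambda)$ with constant real coefficients satisfying $P(\scrT_X)\K_X=0$ for every unit vector $X$; here linearity is precisely the statement that the coefficients of $P$ do not depend on $X$. Thus the whole question reduces to deciding when a single constant-coefficient polynomial annihilates $\K_X$ under the $\scrT_X$-action simultaneously for all $X\in\S(T_pM)$.

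First I would set up the hypotheses of Lemma~\ref{le:U}. Take $V:=\End^{\mathrm{sym}}(T_pM)$ with its natural inner product, $S:=\S(T_pM)$ (connected and real analytic once $\dim M\geq 2$, the case $\dim M=1$ being vacuous), the analytic map $X\longmapsto\scrT_X\in\so(V)$ — which is skew-symmetric because $\tau_X\in\so(T_pM)$ acts on symmetric endomorphisms by the skew operator $\tau_X\star=-2\,\scrT_X$ — and the analytic map $X\longmapsto\K_X=\K_0(X)\in V$. Lemma~\ref{le:U} then asserts that such a $P$ exists if and only if, on an open connected $U$ where the nonzero eigenvalues of $-\scrT_X^2$ are continuous, each eigenvalue-component either has identically vanishing $\K_X$-projection or a constant eigenvalue; in the affirmative case the polynomial $\lambda\,Q$, with $Q$ the product of the surviving quadratic factors, annihilates $\K$, and it is of least degree unless already $Q(\scrT)\K\equiv 0$.

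It remains to read these abstract conditions off from the explicit decomposition of $\End^{\mathrm{sym}}(V)$ recorded just before the proposition. Since $\scrT_X=-\tfrac12(\tau_X\star)$, the eigenvalues of $-\scrT_X^2$ on the blocks $\K^{0,k}$, $\K^{k,\ell}_{(1,1)}$ and $\K^{k,\ell}_{(2,0)+(0,2)}$ are $\tfrac14\lambda_k^2$, $\tfrac14(\lambda_\ell-\lambda_k)^2$ and $\tfrac14(\lambda_\ell+\lambda_k)^2$ respectively (the case $k=\ell$ of the last giving $\lambda_k^2$ on $\K^{k,k}_{(2,0)+(0,2)}$), while the diagonal $(1,1)$-blocks $\K^{k,k}_{(1,1)}$ commute with $\scrT_X$ and hence lie in its kernel; this last fact is why the factor $\lambda$ is always present and $P=\lambda\,Q$. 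For each block the alternative of Lemma~\ref{le:U} becomes one of the listed alternatives: in the first bullet the two components $\K^{0,k}$ and $\K^{k,k}_{(2,0)+(0,2)}$ carry the distinct eigenvalues $\tfrac14\lambda_k^2$ and $\lambda_k^2$, but both are constant exactly when $\lambda_k$ is, so the two separate conditions of Lemma~\ref{le:U} collapse into the single stated disjunction; the second and third bullets translate the off-diagonal $(1,1)$- and $(2,0)+(0,2)$-blocks verbatim. The polynomial $Q$ assembled from the surviving factors is the least common multiple written in the statement, $P=\lambda\,Q$ is a genuine linear Jacobi relation by Corollary~\ref{co:JR}, and both the minimality and the ``smaller degree only if $Q(\scrT)\K\equiv 0$'' clauses are inherited directly from the final sentence of Lemma~\ref{le:U}.

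The step I expect to demand the most care is the translation carried out above, and within it two points in particular: confirming that $\scrT_X$ is genuinely skew-symmetric on $\Sym^2 T_pM$, so that the spectral picture of Lemma~\ref{le:U} is available at all, and keeping the block-wise bookkeeping honest when distinct blocks accidentally share an eigenvalue of $-\scrT_X^2$ on $U$ (for instance through a coincidence $\lambda_k=\lambda_\ell\pm\lambda_m$). The component-wise phrasing of the three bullets, rather than an eigenvalue-wise phrasing, is exactly what sidesteps such coincidences, and the least common multiple in the definition of $Q$ ensures that no factor is double-counted; verifying that this component-wise formulation is equivalent to the raw output of Lemma~\ref{le:U} is the real content beyond the formal citation.
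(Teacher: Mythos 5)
Your proposal is correct and follows essentially the same route as the paper: apply Lemma~\ref{le:U} to the pair $(\,\scrT,\,\K\,):\S(T_pM)\to\so(V)\oplus V$ with $V=\End^{\mathrm{sym}}(T_pM)$, identify the non--zero eigenvalues of $-\,\scrT_X^2$ from the block decomposition as $\tfrac14\lambda_k^2$, $\lambda_k^2$, $\tfrac14(\lambda_\ell\pm\lambda_k)^2$, and translate the dichotomy of the lemma block by block. Your explicit discussion of accidental eigenvalue coincidences is a welcome elaboration of what the paper compresses into ``removing duplicates'' after shrinking $U$.
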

 \begin{proof}
 Set $S\,:=\,\S(\,T\,)$ and consider the vector space $V\,:=\,\End^{\mathrm{sym}}\,T$ in order to 
 apply Lemma \ref{le:U} to the pair of functions
  $$
     (\,\scrT,\,\K\,):\;\;S\;\longrightarrow\; \so(\,V\,)\;\oplus\;V,\qquad X\;\longmapsto\;(\,\scrT(\,X\,),\,\K(\,X\,)\,)
  $$ 
 For each $X\in U$, the set of non--zero eigenvalues $\{\mu_1^2(\,X\,),\,\mu_2^2(\,X\,),\,\ldots\}$  of $-\,\scrT(\,X\,)^2$ is the 
 (not necessarily disjoint) union of the following two sets:
 $$
  \begin{array}{l}
    \{\;\frac14(\,\lambda_k(\,X\,)\, -\, \lambda_\ell(\,X\,)\,)^2,\;\frac14(\,\lambda_k(\,X\,)\, 
    +\, \lambda_\ell(\,X\,)\,)^2\;|\;1\leq k \,< \,\ell\;\}\\ \{\;\frac14\lambda_k^2(\,X\,),\;\lambda_k^2(\,X\,)\;|\;k\geq 1\;\}
   \end{array}
  $$
  Possibly after passing to a smaller connected non--empty open subset $\tilde U\subset U$ and if necessary 
  removing duplicates, we can assume  that also $0\,<\, \mu_1^2 \,<\,\mu_2^2\,<\, \cdots$ are well defined 
  continuous functions on $U$. The result follows from Lemma \ref{le:U}.
 \end{proof}

 \begin{remark}\label{re:EXIST_LJR}
  Let $(\,M,\,g,\,\bar\nabla\,)$ be a naturally reductive space with
  torsion form $\tau$. The curvature tensors $\bar R$ and $R$ of
  $\bar\nabla$ and $\nabla$ are related by
  \begin{equation}\label{eq:BE_7.87}
   \bar R(\,Y,\,X,\,X,\,Y\,)
   \;\;=\;\;
   R(\,Y,\,X,\,X,\,Y\,)\;+\;\frac14\;\langle\;\tau_X^2Y,\;Y\;\rangle
  \end{equation}
  see~\cite[Prop.~7.87]{Bes}. Moreover, it is clear that $(\tau^2_X)^{k,\ell}
  \,=\,0$ for $k\neq \ell$ and that $(\tau^2_X)_{(2,0)+(0,2)}^{k,k}\,=\,0$
  for all $k\,\geq\,1$. Therefore, the assertions of Proposition~\ref{p:EXIST_LJR}
  remain unaffected if $R$ is replaced by $\bar R$.
 \end{remark}

 \noindent
 However, $P(\,\lambda\,)$ is not always the
 polynomial of smallest degree such that $P(\,\scrT\,)\,\K\,\equiv\,0$. A
 trivial example is the flat euclidean space, where $Q(\,\scrT\,)\,\K\,\equiv\,0$
 with $Q(\,\lambda\,)\, = \, 1$. Nevertheless we can rule out in many
 situations that $Q(\,\scrT\,)\,\K\,\equiv\,0$ holds:
  
 \begin{lemma}
 \label{le:MIN_LJR}
  Let $(\,M,\,g,\,\bar\nabla\,)$ be a naturally reductive space with torsion form $\tau$, associated linear operator 
  $\scrT$ on $\Jac^{\mathrm{\bullet}}\,TM$ defined by \eqref{eq:Recursion_Operator} 
  and symmetrized Riemannian curvature tensor $\K$.
  Let $Q(\,\lambda\,)$ be a normed polynomial that is not divisible 
  by the linear factor $\lambda$. If $Q(\,\scrT\,)\K\,\equiv\, 0$, then $\Ric\,=\,0$.
 \end {lemma}

 \begin{proof}
 Let $X\,\in\,T\,:=\,T_pM$ and let $\Sym^2(\,T\,)\,$ denote the space of symmetric $2$--tensors. 
 The action of $\scrT(\,X\,)$ on $\End^{\mathrm{sym}}\,T$ defined in \eqref{eq:def_of_derivation} 
 can be seen as an action on $\Sym^2(\,T\,)\,$ via the canonical isomorphism $\Sym^2(\,T\,)\,\cong\,
 \End^{\mathrm{sym}}\,T$. Furthermore, by assumption $Q(\,\lambda\,)$ is the product of linear 
 factors $\lambda\,-\,\mu_\ell$  with complex numbers $\mu_\ell\,\neq \,0$, where the index 
 $\ell$ is an integer between $1$ and $d$. Taking the trace of the $\End^{\mathrm{sym}}\,T$--valued 
 equation $Q(\,\scrT\,)\K\,\equiv\, 0$, we have
 $$
  (\;\scrT(\,X\,)\, - \,\mu_1\,\id\;) \cdot \cdots \cdot
  (\;\scrT(\,X\,)\, - \,\mu_d\,\id\;)\;
   \Ric(\,X,\,X\,)
  \;\;=\;\;
  0
 $$
 Since $\scrT(\,X\,)\, \Ric (\,X,\,X\,) \,=\, \Ric(\,\tau_X\,X,\,X\,)\,=\,0$, this reduces to
 $$
  \mu_1\cdot\; \cdots \;\cdot\mu_d\;\cdot\;
   \Ric(\,X,\,X\,)
  \;\;=\;\;
  0
 $$
 Because the $\mu_\ell$ are different from zero, we conclude that
 $\Ric\,\equiv\,0$.
 \end{proof}

 \begin{corollary}\label{co:MIN_LJR}
  In the situation of  Proposition~\ref{p:EXIST_LJR}, suppose that $(\,M,\,g,\,\bar\nabla\,)$ is not Ricci
  flat. If there exists a linear Jacobi relation, then $P(\,\scrT\,)\,\K\,\equiv\,0$ is the minimal linear Jacobi relation. 
  In particular, the order $d$ is even, cf. Proposition \ref{p:LJR_STRUK}.  
  Therefore, a linear Jacobi relation of order $2$ is automatically minimal unless $(\,M,\,g\,)$ is a symmetric space.
 \end{corollary}
 
 \begin{proof}
 Let $Q$ and $P$ be the polynomials from
 Proposition~\ref{p:EXIST_LJR}. Then, we have $P(\,\scrT\,)\,\K\,\equiv\,0$ and 
 the only possibility for a linear Jacobi relation of strictly smaller degree is $Q(\,\scrT\,)\,\K\,\equiv\,0$.
 However, the second alternative can be thrown out because of Lemma \ref{le:MIN_LJR}. 
 In particular, a minimal linear Jacobi relation of order strictly
 smaller than two is necessarily of order zero. Then, $\nabla_{X}R (\,Y,\,X,
 \,X,\,Y\,)\,=\,0$ holds for all vector fields $X$ and $Y$. Since $\nabla R$ has 
 the symmetries of a Young diagram with two rows of length $3$ and $2$, 
 respectively, this implies $\nabla R \,=\,0$, that is $(\,M,\,g\,)$ is a symmetric space.
 \end{proof}

 \paragraph{Proof of Theorem~\ref{th:LJR}}
\begin{enumerate}
\item 
          We will establish the linear Jacobi relation \eqref{eq:LJR_BS}
          on the total space of the Hopf fibration $\hat\MM^n(\,\kappa,\,r\,)\longrightarrow \MM^n(\,\kappa\,)$ 
          of Hopf circles of radius $r$ over some complex space form of constant holomorphic 
          sectional curvature $\kappa$. Let us first assume that $\kappa\neq 0$. We let $\hat\nabla$ 
          be the naturally reductive structure  whose torsion tensor $\hat\tau$ and curvature tensor $\hat R$
          are given by \eqref{eq:BS_TOR}. There is the orthogonal splitting $T\hat\MM^n(\,\kappa,\,r\,)\,=\,\scrH\oplus\scrV$ into 
          the horizontal and the vertical subbundle. Therefore, every unit vector $U$ of $T\,:=\,T_p\hat 
          \MM^n(\,\kappa,\,r\,)$ can be uniquely written as $U\,=\,X\,+\,t\,V_0$ where $V_0$ is a vertical 
          unit vector and $X$ is a horizontal vector $\langle \,X,\,V_0\,\rangle\,=\,0$ satisfying 
          $\|X\|^2\,+\,t^2 \,=\,1$. Then, $\hat\tau_U\,=\,c\,\big  (\, J\,X\,\wedge V_0^\flat\,+\,t\,J\,\big )$
          where $J$ denotes the Hermitian structure of $\MM^n(\,\kappa\,)$ and $c\,\neq\,0$ is given by \eqref{eq:c2}. 
          Hence, the restriction of $\hat\tau_U$ to the $3$--dimensional space $\{\,X,\,J\,X,\,V_0\,\}$ is a skew--symmetric endomorphism
          of length $|\,c\,|$. Set $T_0\,:=\,\R\,(\,X\,+\,t\,V_0\,)$, let $T_1$ be the orthogonal complement of $\{\,X,\,J\,X\,\}$ in
          $\scrH\,=\,\{\,V_0\}^\perp$ and $T_2$ be the orthogonal complement of
          $U\,=\,X\,+\,t\,V_0$ in $\{\,X,\,J\,X,\,V_0\,\}$.  Then,
          \begin{equation}
           \label{eq:BS_DEC}
           T
           \;\;=\;\;
           T_0\;\oplus\;T_1\;\oplus\;T_2
          \end{equation}
          is the eigenspace decomposition of $T$ with respect to $-\, \hat\tau_U^2$.
          The non--zero eigenvalues $\lambda_1^2\,:=\,\,t^2\,c^2$
          and $\lambda_2^2\,:=\,c^2\,$ are well defined 
          continuous functions in $t$ satisfying $\lambda_1^2\,<\,\lambda_2^2$ 
          for $0\,<\,t\,<\,1$. In order to apply Proposition~\ref{p:EXIST_LJR}, we may substitute the Riemannian curvature tensor 
          of $\hat \MM^n(\,\kappa,\,r\,)$ by $\hat R$ as justified by Remark~\ref{re:EXIST_LJR}.
          Furthermore, the symmetrization $\hat \K(\,U\,)\,:\,V\mapsto \hat \K (\,V,\,U,\,U\,)$ of $\hat R$ satisfies 
          $\hat\K(\,U\,) \,=\, \K(\,X\,)\,+\,c^2\,(\,J\,X\,)^\flat\,\odot\,(\,J\,X\,)^\flat$
          where $\K$ is the symmetrized Riemannian curvature tensor \eqref{eq:FS_CURV}
          of the Fubini--Study metric and $\lambda\,\odot\,\lambda$ denotes
          the symmetric square of a $1$--form $\lambda$. Polarization
          of~\eqref{eq:FS_CURV} shows that $\K(\,X\,)|_{T_0}\, =\, 0$ and $\K(\,X\,)(\,T_\ell\,)\subset T_\ell$ for
          $\ell\,=\,1,\,2$.
          Also $\K(\,X\,)|_{T_1}\, =\, \frac{\kappa}{4}\,\|X\|^2\,\id$ and hence $[\,\K(\,X\,),\,\tau_X\,]|_{T_1}\, =\,0$. Thus,
          $\K\,=\,\K^{1,1}_{(1,1)}\,\oplus\, \K^{2,2}$. 
          In addition, $(\,J\,X\,)^\flat\, \odot\,(\,J\,X\,)^\flat$ maps $T_2$ to $T_2$ and vanishes 
          on both $T_0$ and $T_1$. We conclude that  $\hat\K\,=\,\hat\K^{1,1}_{(1,1)}\,\oplus\, 
          \hat\K^{2,2}$. Therefore, the conditions of Proposition~\ref{p:EXIST_LJR} are satisfied, and
          only $\hat\K^{2,2}_{(2,0)+(0,2)}$ contributes a  non--trivial quadratic factor $\lambda^2\, + \,c^2$ 
          to the polynomial $Q(\,\lambda\,)$. Hence, $P(\,\lambda\,)\, = \,\lambda\,Q(\lambda)\,=\,
          \lambda(\,\lambda^2 \,+\, c^2\,)$. We obtain the linear Jacobi relation $P(\,\scrT\,)\K\,\equiv\,0$
          in compliance with Corollary~\ref{co:JR}. This proves \eqref{eq:LJR_BS}. 
          From Equations \eqref{eq:BS_TOR} and  \eqref{eq:BE_7.87}, it follows that the 
          Ricci tensor does not vanish since $\Ric(\,V_0,\,V_0\,)\,=\,\frac{n}{2}\,c^2$. 
          It is also clear that $\hat \MM^n(\,\kappa,\,r\,)$ is not a symmetric space
          unless it is a round sphere $\S^{2n+1}$. Thus, \eqref{eq:LJR_BS} 
          is minimal according to Corollary~\ref{co:MIN_LJR}. 
          Consequently, we conclude from Equation \eqref{eq:HG_TORCURV} 
          that for each $c>0$ there exists the same minimal Jacobi relation $P(\,\scrT\,)\,\K\,\equiv\,0$ 
          on the Heisenberg group $\hat \MM^n(\,0,\,\frac{1}{c^2}\,)$ equipped with a multiple $\frac{1}{c^2}$ 
          of its metric of type H. \qed
          
          \bigskip
          \noindent
          Naturally reductive spaces $(\,M,\,g,\,\bar\nabla\,)$ whose torsion form $\tau$
          is a generalized vector cross product are classified in Theorem \ref{th:GVCP}.
          Here, the eigenvalues of $\tau_X$, and thus also those of the associated operator $\scrT(\,X\,)$ 
          defined in \eqref{eq:def_of_derivation}, are constant in $X\in\S (\,TM\,)$. Therefore, both 
          the characteristic polynomial $P_{\mathrm{char}}(\,\lambda\,)$ and the minimal polynomial 
          $P_{\mathrm{min}}(\,\lambda\,)$ of $\scrT(\,X\,)$ do not depend on $X$. 
          In particular, we obtain the linear Jacobi relations $P_{\mathrm{char}}(\,\scrT\,)\,
          \K\,\equiv\, P_{\mathrm{min}}(\,\scrT\,)\,\K\,\equiv\,0$ according to Corollary \ref{co:JR}. 
          Furthermore, it is easy to see that a rescaling $\tilde g\,= \,\frac{1}{c^2}\,g$ 
          of the metric tensor by a constant factor transforms the coefficients $a_{2k}$
          of the linear Jacobi relation \eqref{eq:LJR} according to $\tilde a_{2k}\,=\,c^{2\,k} \,a_{2k}$ i.e.,
          $a_{2k}$ transforms under a constant rescaling of the metric like $\scal^k$.
          Hence, it suffices to prove the formulas  \eqref{eq:LJR_NP}, \eqref{eq:LJR_NK} 
          as well as their minimality for a specific rescaling of the metric tensor. 
          
         \item
         The four $7$--dimensional spaces mentioned in Theorem \ref{th:GVCP} are 
         standard normal homogeneous nearly parallel $\G_2$--spaces $(\,M,\,g,\,\sigma\,)$
         in the sense of Definition \ref{de:NR_NP}. If the metric  is normalized so that 
         $\scal\,=\,\frac{21}{8}$, then the nearly parallel constant $\tau_0$ is equal to $\pm\,1$.
         Hence, the  torsion form is given by $\tau\,=\,\pm\,\frac{1}{6}\,\sigma$
         according to Corollary \ref{co:NR_NP}. For each unit vector $X\in T_pM$, set 
         $T_0\,:=\,\R\,X$ and $T_1\,:=\,T_0^\perp$. Then, $T_0$ is the kernel
         of $\sigma_X$ and $\,\sigma_X|_{T_1}\,:\,T_1 \longrightarrow T_1$
         is a  complex structure. Thus, the eigenvalues of $-\,\tau_X^2$ are 
         $\{\,0,\,\frac1{36}\,\}$ and those of $-\,\scrT(\,X\,)^2$ are $\{\,0,\,\frac1{144},
         \,\frac1{36}\,\}$. However, $\lambda^2\,+\,\frac1{144}$ is
         not a factor of the polynomial $P(\,\lambda\,)$ defined in
         Proposition~\ref{p:EXIST_LJR}, because $\K^{0,1}$ vanishes
         due to $X\,\lrcorner\, \K(\,X\,)\,=\,0$. Therefore, already 
         $P(\,\lambda\,)\,:=\,\lambda\,(\,\lambda^2\,+\,\frac{1}{36}\,)$
         results in the linear Jacobi relation $P(\scrT\,)\,\K\,\equiv\,0$.
         This proves \eqref{eq:LJR_NP}. Moreover, a simply connected compact
         nearly parallel $\G_2$--space has generic holonomy $\SO(\,7\,)$.
         Therefore, it is a symmetric space if and only it is the round $7$--sphere.
         Otherwise Corollary \ref{le:MIN_LJR} implies that~\eqref{eq:LJR_NP} 
         is minimal. \qed
\item
          \bigskip
          \noindent
          The four $6$--dimensional spaces listed in Theorem \ref{th:GVCP} 
          are simply connected homogeneous strict nearly K\"ahler manifolds.
          As a result of Proposition \ref{p:NK}, the torsion form $\tau$ is given by 
          $\nabla J\circ J$. If the scalar curvature is normalized to
          $\scal\,=\,30$,  then $(\,J,\,\Psi^+\,:=\,\tau\,)$ defines 
          a normalized $\SU(\,3\,)$--structure; especially
          \eqref{eq:CT} holds with $c\,=\,1$. For each unit vector $X\in\,T_pM$, set 
          $T_0\,:=\,\mathrm{span}_{\R}\{\,X,\,J\,X\,\}$ and 
          $T_1\,:=\,T_0^\perp$.  Thus, $T_0$ is the kernel
          of $\tau_X$ and $\tau_X|_{T_1}\,:\,T_1 \longrightarrow T_1$ is a
          complex structure  according to \eqref{eq:CT}. Hence, the eigenvalues 
          of $-\,\tau_X^2$ are $\{\,0,\,1\,\}$. Consequently, the eigenvalues of 
          $-\,\scrT(\,X\,)^2$ are $\{\,0,\,\frac14 \,,\,1\,\}$ and the minimal polynomial
          of $\scrT(\,X\,)$ is
          \begin{equation}\label{eq:POL_NK}
           P(\,\lambda\,)
           \;\;:=\;\;
           \lambda\,
           (\,\lambda^2\,+\,\frac14\,\,)\,(\,\lambda^2\,+\,1\,)
           \;\;=\;\;
           \lambda^5\;+\;\frac54\,\,\lambda^3\;+\;\frac14\,\lambda
          \end{equation} 
          Thus, $P(\scrT\,)\,\K\,\equiv\,0$ holds, which results in \eqref{eq:LJR_NK}. It remains to show
          that this is the minimal linear Jacobi relation unless $(\,M,\,g\,)$ is the round $6$--sphere.
          Following Proposition~\ref{p:EXIST_LJR}, we have to show that 
          $\K^{0,1}$ and $\K^{1,1}_{(2,0)+(0,2)}$
          both do not vanish identically:
          \begin{itemize}
          \item Suppose that $\K^{0,1}\,\equiv\,0$. Then,
                   $\K(\,X\,)\,J\,X \in \R\,J\,X$ for all $X\in T$. Because
                   of Proposition \ref{p:CHSC} $(c)\Rightarrow (a)$, this implies that $(\,M,\,g\,)$ is the round $6$--sphere.
          \item Suppose that $\K^{1,1}_{(2,0)+(0,2)}\,\equiv\,0$. Then, $\langle\,\K(\,X\,)\,U,\, 
                  \tau_XU\,\rangle\,=\,0$ for all $X\,\in\,T$ and $U\,\in\,\{\,X,\,J\,X\,\}^\perp$. Proposition 
                  \ref{p:CHSC} $(d)\Rightarrow (a)$  shows that $(\,M,\,g\,)$ is the round $6$--sphere. \qed
          \end{itemize}
\end{enumerate}
\subsection{Comparison of Theorem~\ref{th:LJR} with known examples} 
\label{se:EX}
 Sporadic examples of linear Jacobi relations were observed before,
 cf.~\cite{GN}. In order to compare these results with Theorem~\ref{th:LJR},
 let $\rmK_\frakg$ denote the Killing form of some semisimple Lie algebra
 $\frakg$. 

 \begin{itemize}
  \item For a $3$--dimensional oriented naturally reductive
          space, the torsion form $\tau$ is a multiple $c$ of the standard
          volume form  $\det\,:=\,\d\,E_1\wedge \d\,E_2\wedge\d E_3$. Then,
          \begin{equation}\label{eq:EX_LJR_0}
           \K_3
           \;\;=\;\;
           -\,c^2\,\|\,\cdot\,\|^2\,\K_1
          \end{equation}
          cf. \cite[Theorem~4.3]{Gon}. 
  \item For the normal metric on Berger's manifold 
          $V_1\,=\,\Sp(\,2\,)/\SU(\,2\,)$ 
          corresponding to $-\frac1{30}\,\rmK_{\sp(\,2\,)}$,
          it was shown in \cite{NT} that: 
         \begin{equation}\label{eq:EX_LJR_1}
          \K_3
          \;\;=\;\;
          -\;\|\,\cdot\,\|^2\,\K_1
         \end{equation}
  \item For the normal metric induced by $-\frac1{12}\,
         \rmK_{\su(\,3\,)\oplus\so(\,3\,)}$  on 
          Wilking's manifold $V_3\,=\,\SU(\,3\,)\,\times\,\SO(\,3\,)\,/\,\U(\,2\,)$,
         we have according to \cite{MNT}:
         \begin{equation}\label{eq:EX_LJR_2}
          \K_3
          \;\;=\;\;
          -\;\frac25\,\|\,\cdot\,\|^2\K_1
         \end{equation}
  \item For the normal metric induced by $-\frac16\,\rmK_{\su(3)}$
          on the complex flag manifold $\bbF^3\,=\,\,\SU(\,3\,)/\T^2$,
          it was proved in \cite{Ar} that:
          \begin{equation}\label{eq:EX_LJR_3}
           \K_5
           \;\;=\;\;
           -\;\frac{5}{8}\,\|\,\cdot\,\|^2\,\K_3\;
           -\;\frac1{16}\,\|\,\cdot\,\|^4\K_1
          \end{equation} 
  \item For the normal metric induced by $-\frac1{12}\,\rmK_{\su(3)}$
          on $\CP^3\,=\,\SO(\,5\,)/\U(\,2\,)$, we have according to \cite{AAN}:
         \begin{equation}\label{eq:EX_LJR_4}
          \K_5
          \;\;=\;\;
          -\;\frac54\,\|\,\cdot\,\|^2\,\K_3
          \;-\;\frac14\,\|\,\cdot\,\|^4\,\K_1
         \end{equation} 
 \end{itemize}

 \noindent
 It remains to compare the above results with Theorem~\ref{th:LJR}. The Jacobi
 relation \eqref{eq:EX_LJR_0} is immediately clear since in dimension three
 the standard volume form $\det$ is a vector cross product. 
 In addition, every simply connected $3$--dimensional naturally reductive space is
 isometric to either $\hat\MM^1(\,\kappa,\,r\,)$ or $\hat\MM^1(\,0,\,\frac{1}{c^2}\,)$,
 cf. \cite[Chpt. 6]{TV}. Thus, $c$ is equal to $\frac12r\kappa$ for $\kappa\neq 0$ 
 or $\frac{1}{c^2}$ is the homothety factor by which we have rescaled the
 metric of type H on the $3$--dimensional Heisenberg group 
 $\hat\MM^1(\,0,\,\frac{1}{c^2}\,)$.
 
 \bigskip
 \noindent
 We still have to show that also Equations \eqref{eq:EX_LJR_1}--\eqref{eq:EX_LJR_4} 
 match with Theorem \ref{th:LJR}. Let $(\,G, \, H,\,B\,)$ be a standard normal 
 naturally reductive triple where $G$ is compact with semisimple Lie algebra $\frakg$.
 Then, the Killing form $\rmK_\frakg$ is negative 
 definite and there exists $b\,>\,0$ such that $B\,=\,-\,b^2\,\rmK_\frakg$. Let $\scal$ 
 denote the scalar curvature of the normal homogeneous space $G/H$.

 \begin{lemma}
  \begin{enumerate}
   \item (cf.~\cite[Lemma~7.1]{AlS})
         If $G/H$ is a standard normal homogeneous nearly parallel $\G_2$--space,
         then $\scal\, = \, \frac{63}{20}\frac{1}{b^2}$.
   \item (cf.~\cite[Lemma 5.4]{MS})
         If $G/H$ is a $6$--dimensional homogeneous strict nearly K\"ahler
         manifold, then $\scal\, = \, \frac{5}{2}\frac{1}{b^2}$.
  \end{enumerate}
 \end{lemma}

 \begin{corollary}
 The results from \cite{Ar,AAN,MNT,NT} are consistent with Theorem \ref{th:LJR}.
 \end{corollary}
 \begin{proof}
    Substituting $\scal\, = \, \frac{63}{20}\frac{1}{b^2}$ with $\frac{1}{b^2}\,=\,30$ and 
  $\frac1{b^2}\,=\,12$ into \eqref{eq:LJR_NP} gives us \eqref{eq:EX_LJR_1} 
  and \eqref{eq:EX_LJR_2}, respectively. Similarely, $\scal\, = \, \frac{5}{2}\frac{1}{b^2}$ 
  together with \eqref{eq:LJR_NK} produces \eqref{eq:EX_LJR_3} and \eqref{eq:EX_LJR_4}.
 \end{proof}

 \bigskip
 \noindent
 Hence, two out of three linear Jacobi relations on $6$--dimensional homogeneous strict nearly K\"ahler manifolds (different from $\S^6$) and 
normal homogeneous proper nearly parallel $\G_2$--spaces, respectively, described in Theorem \ref{th:LJR}
 were already known before. Apparently, the linear Jacobi relations on the nearly K\"ahler $\S^3\,\times\,\S^3$ and 
 the squashed sphere $\S^7_{\mathrm{squashed}}$ were previously overlooked.  Also the existence 
 of linear Jacobi relations on the total  spaces of the Hopf fibrations over complex space forms including 
 the Heisenberg groups is seemingly a new result.

\section{Twistor Equations and Covariant Derivatives of Curvature}
\label{se:GT}
 Consider the Lie algebra $\so(\,n\,)$ of the orthogonal group $\SO(\,n\,)$
 in dimension $n\,\geq\,5$ and denote by $r\,:=\,\lfloor\frac n2\rfloor$ its
 rank. A standard choice of a maximal torus for $\so(\,n\,)$ is the set
 of diagonal matrices $D\,=\,\diag(\,D_1,\,\ldots,\,D_r,\,0\,)$ where each
 submatrix $D_\mu$ ($\,\mu\,=\,1,\,\ldots,\,r$) is a $2 \times 2$--matrix of
 the form $D_\mu\,:=\,\left(\begin{smallmatrix}0 & -\epsilon_\mu\\
 \epsilon_\mu & 0\end{smallmatrix}\right)$ with a complex number
 $\epsilon_\mu\,\in\,\C$ and the trailing $0$ occurs exactly for odd $n$.
 The weights of the standard representation of $\so(\,n\,)$ on $\C^n$ are
 then the linear functionals $\{\,\pm\epsilon_\mu\,\}$ sending $D$ to
 $\epsilon_\mu$. We choose the standard linear ordering on the dual space
 of the torus such that $\epsilon_1\,>\,\cdots\,>\,\epsilon_r$.

 \bigskip
 \noindent
 In this setup irreducible complex representations $V_\lambda$ of
 $\so(\,n\,)$ are labeled by their highest weights $\lambda\,=\,
 \lambda_1\varepsilon_1\,+\,\ldots\,+\,\lambda_r\varepsilon_r$ with
 coefficients $\lambda_1,\,\ldots,\,\lambda_r\,\in\,\frac12\,\Z$ either
 all integers or all half integers satisfying the inequalities
 $$
  \lambda_1\;\;\geq\;\;\lambda_2\;\;\geq\;\;\ldots\;\;\geq\;\;
  \lambda_{r-1}\;\;\geq\;\;\lambda_r\;\;\geq\;\;0
  \qquad\qquad
  \lambda_1\;\;\geq\;\;\lambda_2\;\;\geq\;\;\ldots\;\;\geq\;\;
  \lambda_{r-1}\;\;\geq\;\;|\,\lambda_r\,|
 $$
 for $n$ odd and $n$ even, respectively. The representation $V_\lambda$ of the
 Lie algebra $\so(\,n\,)$ integrates into a representation of $\SO(\,n\,)$ precisely
 if all coefficients $\lambda_1,\,\ldots,\,\lambda_r$ are integers. 
 
 \bigskip
 \noindent
 The corresponding complex vector bundle associated to the
 $\SO(\,n\,)$--principal fiber bundle of positive orthogonal frames over an oriented
 Riemannian manifold $M$ of dimension $n$ will be denoted by $V_\lambda M$.
 The tensor product $V_\lambda\,\otimes\,\C^n$ decomposes into a sum of
 isotypic components $\bigoplus_\varepsilon V_{\lambda+\varepsilon}$ under
 $\so(\,n\,)$ corresponding to a splitting of the vector bundle $V_\lambda M
 \,\otimes\,T M\,=\,\bigoplus_\varepsilon V_{\lambda+\varepsilon}M$ into
 $\nabla$--parallel subbundles indexed by a weight $\varepsilon$
 of the standard representation $\R^n$. For every such $\varepsilon$,
 we consider the first order differential operator
 $$
  P_{\lambda,\varepsilon}:\;\;\Gamma(\;V_\lambda M\;)
  \;\stackrel\nabla\; \longrightarrow \;\Gamma(\;TM\,\otimes\,V_\lambda M\;)
  \;\longrightarrow\;\Gamma(\;V_{\lambda+\varepsilon}M\;)
 $$
 given by the covariant derivative followed by the projection to
 $V_{\lambda+\varepsilon}M$, called a generalized gradient. The highest
 weight $\lambda+\varepsilon_1$ always occurs with multiplicity one
 \cite[p.~6]{SW} and the corresponding gradient $T_{\lambda}\,:=\,P_{\lambda,\varepsilon_1}$
 is called a generalized twistor operator. The generalized twistor equation
 $T_{\lambda}v\,=\,0$ for sections $v\,\in\,\Gamma(\,V_\lambda M\,)$
 is a partial differential equation of finite type \cite{Pil}. If $T_{\lambda}v\,=\,0$,
 then $v$ is called a generalized twistor. Well--known
 examples are the conformal Killing equation for alternating $k$--forms
 corresponding to the highest weight $\lambda\,:=\,\varepsilon_1\,+\,
 \ldots\,+\,\varepsilon_k$ \cite{Sem}, the conformal Killing equation for
 symmetric Killing tensors \cite{HMS} and the Penrose twistor equation
 for positive and negative half spinors corresponding to $\lambda\,=\,
 \frac12\,(\,\varepsilon_1\,+\,\ldots\,+\,\varepsilon_{r-1}\,\pm\,
 \varepsilon_r\,)$. 

 \bigskip
 \noindent
 Recall that the symmetrized $k$--th covariant derivative $\K_k$ of the
 curvature tensor $R$ defined in \eqref{eq:SCD} is a section of $\Jac^k TM$ 
 and hence has the index symmetries of a  Young diagram with two rows of lengths 
 $k+2$ and $2$, respectively. Also there is a unique decomposition 
 \begin{equation}
  \K_k
  \;\;=\;\;
  \K_k^\circ\;+\;\K_k^{\textrm{\tiny rest}}
 \end{equation}
 were $\K_k^{\textrm{\tiny rest}}$ is an expression involving the metric
 tensor at least linearly and all traces of $\K_k^\circ$ vanish. For this
 reason $\K_k^\circ$ is called the completely trace--free part of $\K_k$. 
 Thus, $\K_k^\circ$ is a section of 
 $V_{(k + 2)\epsilon_1+2\epsilon_2}M$ according to Weyl's construction 
 of the irreducible representations of the orthogonal group
 (cf.~\cite[Chpt.~19.5]{FulH}). The parallelity of the metric
 tensor furthermore implies that
 \begin{equation}\label{eq:identify_the_twistor}
  T_{(k+2)\,\epsilon_1\,+\,2\epsilon_2}\K_k^\circ
  \;\;=\;\;
  \K_{k+1}^\circ
 \end{equation}
 Therefore, $\K_k^\circ$ is a generalized twistor if and only if 
 \begin{equation}\label{eq:T_k} 
  \K_{k+1}^\circ
  \;\;=\;\;
  0
 \end{equation}

 \begin{corollary}
 \label{co:generalized_twistor}
  \begin{enumerate}
    \item Let $(\,M,\,\,g\,)$ be the $(\,2\,n\,+\,1\,)$--dimensional ($n\,\geq\,2$)
            total space of the Hopf fibration
            over an $n$--dimensional complex space form
            or a $7$--dimensional simply connected standard 
            normal homogeneous space of positive sectional curvature 
            (possibly) except for $\Sp(\,2\,)/\Sp(\,1\,)$.
            Then, $\K_2^\circ$ is a generalized twistor.
   \item  Let $(\,M,\,g\,)$ be a $6$--dimensional homogeneous strict nearly K\"ahler manifold.
            Then, $\K_4^\circ$ is a generalized twistor.
  \end{enumerate}
 \end{corollary}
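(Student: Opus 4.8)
The plan is to deduce the corollary from the equivalence \eqref{eq:T_k}, which states that $\K_d^\circ$ is a generalized twistor if and only if $\K_{d+1}^\circ\,=\,0$, combined with the linear Jacobi relations provided by Theorem~\ref{th:LJR}.

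First I would isolate the following general principle. Assume that $(\,M,\,g\,)$ carries a linear Jacobi relation of order $d$ in the sense of \eqref{eq:LJR}, so that
$$
 \K_{d+1}
 \;\;=\;\;
 -\,a_2\,\|\,\cdot\,\|^2\,\K_{d-1}
 \;-\;a_4\,\|\,\cdot\,\|^4\,\K_{d-3}\;-\;\cdots
$$
Each summand on the right hand side contains the metric tensor $\|\,\cdot\,\|^2\,=\,g$ at least linearly and therefore belongs entirely to the part $\K_{d+1}^{\textrm{\tiny rest}}$ in the decomposition $\K_{d+1}\,=\,\K_{d+1}^\circ\,+\,\K_{d+1}^{\textrm{\tiny rest}}$ recalled before \eqref{eq:identify_the_twistor}. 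Equivalently, since the completely trace-free part $\K_{d+1}^\circ$ is the projection onto the highest weight summand $V_{(d+3)\epsilon_1+2\epsilon_2}M$ and multiplication by $g$ has vanishing highest weight component, the whole right hand side has trivial Cartan part. Hence $\K_{d+1}^\circ\,=\,0$, and \eqref{eq:T_k} shows that $\K_d^\circ$ is a generalized twistor.

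With this principle in hand the corollary reduces to reading off the orders of the relations in Theorem~\ref{th:LJR}. For assertion $(2)$ the relation \eqref{eq:LJR_NK} on a six-dimensional naturally reductive $3$-symmetric space has order $d\,=\,4$, so $\K_5^\circ\,=\,0$ and $\K_4^\circ$ is a generalized twistor. For assertion $(1)$ both \eqref{eq:LJR_BS}, valid on the total space of the Hopf fibration, and \eqref{eq:LJR_NP}, valid on a seven-dimensional standard normal homogeneous space of positive sectional curvature, are relations of order $d\,=\,2$; consequently $\K_3^\circ\,=\,0$ and $\K_2^\circ$ is a generalized twistor.

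Finally I would account for the two exceptions to \eqref{eq:LJR_NP} noted in Theorem~\ref{th:LJR}. The space $\S^7\,=\,\SU(\,4\,)/\SU(\,3\,)$ is a strict Berger sphere and thus already subsumed under the Hopf fibration case, where the order-two relation \eqref{eq:LJR_BS} again forces $\K_3^\circ\,=\,0$. For $\Sp(\,2\,)/\Sp(\,1\,)$ no linear Jacobi relation exists, so the present argument gives no information; this is precisely the reason for the qualification ``(possibly) except for $\Sp(\,2\,)/\Sp(\,1\,)$''. The only genuinely non-trivial ingredient is the claim that multiplication by $g$ annihilates the highest weight component, i.e.~that the Cartan summand $V_{(d+3)\epsilon_1+2\epsilon_2}$ lies in the completely trace-free part; I expect this to be the single step meriting careful justification, although it is already built into the decomposition used in Section~\ref{se:GT}.
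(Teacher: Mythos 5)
Your argument is correct and coincides with the paper's own proof: both deduce $\K_{d+1}^\circ=0$ from the linear Jacobi relation because every term on the right hand side involves $g$ at least linearly and hence has vanishing trace--free part, and then invoke \eqref{eq:identify_the_twistor} and the orders $d=2$ (relations \eqref{eq:LJR_BS}, \eqref{eq:LJR_NP}) and $d=4$ (relation \eqref{eq:LJR_NK}). Your additional remarks on the two exceptional spheres and on why multiplication by $g$ kills the highest weight component only make explicit what the paper treats as definitional.
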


 \begin{proof}
 Let us assume that a linear Jacobi relation \eqref{eq:LJR}
 of degree $d$ holds. Then,
 \begin{equation*}
  T_{(d+2)\varepsilon_1\,+\,2\varepsilon_2}\,\K_d^\circ
  \;\;\stackrel{\eqref{eq:identify_the_twistor}}=\;\;
  \K_{d+1}^\circ
  \;\;\stackrel{\eqref{eq:LJR}}=\;\;
  \Big(\;g\,(\,a_1\,\K_{d-1}\;+\;a_2\,g\,\K_{d-3}\;+\;\cdots\;)
  \;\Big)^\circ
  \;\;\stackrel{\textrm{def.}}=\;\;
  0
 \end{equation*}
 where the last equality holds because the trace--free part of a term which
 involves the metric tensor $g$ at least linearly vanishes by definition.
 Therefore, Equation \eqref{eq:LJR} implies Equation \eqref{eq:T_k} for $k\,=\,d$. 
 Applying this argument to Equations \eqref{eq:LJR_BS}, \eqref{eq:LJR_NP} and \eqref{eq:LJR_NK} proves the corollary.
 \end{proof}
 
 \bigskip
 \noindent
 We don't know whether Equation \eqref{eq:T_k} also holds on $\Sp(\,2\,)/\Sp(\,1\,)$ for some $k$.

 \bigskip
 \noindent
 It is easy to see that \eqref{eq:T_k} holds with $k\,=\,0$ on some Einstein manifold
 if and only if $\nabla R\,=\,0$. More generally, the Jet Isomorphism Theorem 
 of Riemannian geometry implies that the Taylor expansion of the metric tensor in normal coordinates
 is already encoded in the sequence $\K_0,\,\K_1,\,\K_2,\,\ldots$. For an Einstein 
 manifold it even suffices to consider the sequence of the completely traceless parts $\K_0^\circ,\,
 \K_1^\circ,\,\K_2^\circ,\,\ldots$. The latter assertion can be argued as
 follows: It is straightforward to show that the curvature tensor $R$ of an Einstein
 manifold and its covariant derivative $\nabla R$ both are completely
 trace--free. The symmetries of the Riemannian curvature tensor can be used
 to show that all traces of $\K_{k+2}$ are determined by $\K_0,\,\K_1,\,
 \ldots,\,\K_k$. Our assertion follows by induction. 

 \bigskip
 \noindent
 Therefore, the generalized twistor equation \eqref{eq:T_k} seems to be
 in particular interesting in connection with Einstein manifolds. We 
 don't know whether such an equation holds for more Einstein manifolds than those mentioned in 
 Corollary \ref{co:generalized_twistor}, e.g. for the other homogeneous nearly parallel $\G_2$--spaces.

\appendix
\section{An explicit calculation for $\SU(\,3\,)\,\times\,\SO(\,3\,)\,/\,\U(\,2\,)$ }
\label{se:N11}
 We consider the invariant bilinear form on $\su(\,3\,)\,\oplus\, \su(\,2\,)$ defined by 
 $\hat B_s\,:=\,-\frac12\,\trace_{\su(3)}\,-\,\frac1{2\,s}\,\trace_{\su(2)}$ with $s\neq 0$. 
 Here, $\trace_{\su(n)}$ denotes the usual trace form $\trace(A,B)\,:=\, \trace(A\circ B)$ 
 on skew--Hermitian matrices.  Recall that $\rmK_{\su(2)}\,
 =\,4\,\trace_{\su(2)}$ and $\rmK_{\su(3)}\,=\,6\,\trace_{\su(3)}$ where $\rmK_\frakg$ denotes the
 Killing form. Therefore, $\hat B_{\frac32}$ is a multiple of the Killing form of $\su(\,3\,)\oplus\su(\,2\,)$. 
 We obtain a positive definite invariant bilinear form precisely for $s\,>\,0$. 
 The corresponding family $g_s$ of normal metrics are Wilking's metrics of positive sectional curvature 
 on $\NM(\,1\,,\,1\,)\,=\,\SU(\,3\,)\,\times\, \SO(\,3\,) /\U(\,2\,)$. For $ -1\,< s\,<\, 0$ we obtain naturally 
 reductive structures on $\NM(\,1\,,\,1\,)$ which are not normal.

 \begin{proposition}\label{p:AWS_11}
  The torsion tensor of the canonical connection on $\NM(\,1\,,\,1\,)$ associated with $\hat B_s$ is a multiple of a vector
  cross product if and only if $s\,=\,\frac32$.
 \end{proposition}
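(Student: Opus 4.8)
The plan is to reduce the statement to Gray's defining identity for a (multiple of a) vector cross product and to analyse that identity on the reductive splitting of $\frakm$. By Definition~\ref{de:VCP} together with Theorem~\ref{th:BMS}, the torsion $\hat\tau(X,Y)=-[X,Y]_\frakm$ is a non‑zero multiple of a vector cross product exactly when there is a constant $c$ with
$$
 -\,\hat\tau_X^2\;=\;c^2\,\big(\,\|X\|^2\,\id\;-\;X\otimes X^\flat\,\big)\qquad(X\in\frakm).
$$
First I would fix the reductive complement. Writing $\widehat A=\bigl(\begin{smallmatrix}A&0\\0&0\end{smallmatrix}\bigr)$ for the upper block inclusion $\su(2)\hookrightarrow\su(3)$ and $Z_0=i\,\diag(1,1,-2)$, the isotropy is $\frakk=\{(\widehat A+cZ_0,\,A):A\in\su(2),\,c\in\R\}$, and a short computation with $\hat B(s)$ gives the orthogonal splitting $\frakm=\frakm_1\oplus\frakm_2$ with horizontal part $\frakm_1=\{(\bigl(\begin{smallmatrix}0&u\\-u^{*}&0\end{smallmatrix}\bigr),0):u\in\C^2\}\cong\C^2$ and vertical part $\frakm_2=\{(\widehat B,\,-sB):B\in\su(2)\}\cong\su(2)$, carrying $\|u\|^2$ on $\frakm_1$ and $\tfrac{1+s}2\,\|B\|^2$ on $\frakm_2$ (where $\|B\|^2=-\trace B^2$).

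Next I would record $\hat\tau$ as a block operator in this splitting. Since the base $\CP^2=\SU(3)/\U(2)$ is symmetric, one has $[\frakm_1,\frakm_1]_\frakm\subset\frakm_2$ (pure O'Neill term), $[\frakm_1,\frakm_2]_\frakm\subset\frakm_1$ and $[\frakm_2,\frakm_2]_\frakm\subset\frakm_2$, the vertical bracket acquiring the factor $1-s$ from the projection onto $\frakk$. For a mixed vector $X=X_h+X_v$ with $X_h\sim w\in\C^2$ and $X_v\sim B\in\su(2)$ this yields
$$
 \hat\tau_X\;=\;\begin{pmatrix} A_1 & R \\ L & A_2 \end{pmatrix},
$$
with $A_1v=-Bv$, $A_2C=-(1-s)[B,C]$, $RC=Cw$ and $Lv=-\tfrac1{1+s}\,(vw^{*}-wv^{*})^{0}$ (trace‑free part). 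I would then evaluate $\hat\tau_X^2$ blockwise, using the $\su(2)$–identity $\|[B,C]\|^2=2(\|B\|^2\|C\|^2-\langle B,C\rangle^2)$.

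The two \emph{diagonal} blocks $A_1^2+RL$ and $A_2^2+LR$ force $c^2=\tfrac1{1+s}$ and produce the extra vertical eigenvalue $\tfrac{4(1-s)^2}{1+s}$; equating the two gives only the necessary condition $4(1-s)^2=1$, i.e.\ $s\in\{\tfrac12,\tfrac32\}$. The crux — and the step I expect to be the main obstacle — is that neither the pure directions nor the diagonal data separate $s=\tfrac12$ from $s=\tfrac32$; one must exploit the \emph{off‑diagonal} block $A_1R+RA_2$, which couples $\frakm_1$ and $\frakm_2$ and is only visible for genuinely mixed $X$. Taking the explicit unit vector $X=X_h+X_v$ with $w=(1,0)$ and $B$ a unit multiple of $\tfrac i2\diag(1,-1)$, one computes $\|X\|^2=2$ and compares $-\hat\tau_X^2$ with $c^2(2\,\id-X\otimes X^\flat)$. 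The diagonal blocks match under $4(1-s)^2=1$, and the single target off‑diagonal entry at the $(X_h,X_v)$–slot is reproduced automatically; however $A_1R+RA_2$ \emph{also} contributes the value $c^2\,(1+2(1-s))=c^2\,(3-2s)$ at positions where $X\otimes X^\flat$ vanishes (these combine the $\frakm_1$–eigenvalue of $A_1$ with the $\frakm_2$–eigenvalue of $A_2$, whose ratio $1:2(1-s)$ is normalisation‑independent). The vector‑cross‑product condition therefore forces $3-2s=0$, that is $s=\tfrac32$, and in particular excludes $s=\tfrac12$.

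For the converse it suffices to note, as already observed, that $\hat B(\tfrac32)=-\tfrac1{12}\,\rmK_{\frakg}$ is a multiple of the Killing form, so $g_{3/2}$ is the standard normal metric on the Wilking space $V_3=\SO(3)\times\SU(3)/\U(2)$; by Proposition~\ref{p:EX_NR_NP} this is a proper nearly parallel $\G_2$–space, whence its torsion form $-\tfrac{\tau_\circ}6\,\sigma$ is a non‑zero multiple of a vector cross product. This closes both implications.
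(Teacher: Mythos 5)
Your proposal is correct and follows essentially the same route as the paper: the same splitting $\frakm\cong\su(2)\oplus\C^2$, the same necessary condition $4(1-s)^2=1$ coming from the pure horizontal and vertical directions, and the same observation that the mixed (off--diagonal) part of $\hat\tau_X^2$ carries the coefficient $3-2s$ and therefore excludes $s=\frac12$ --- the paper merely polarizes your identity $-\hat\tau_X^2=c^2\,(\,\|X\|^2\,\id-X\otimes X^\flat\,)$ into the three bilinear conditions \eqref{eq:VCP1}--\eqref{eq:VCP3}. Your shortcut for the converse (citing Proposition~\ref{p:EX_NR_NP} and Corollary~\ref{co:NR_NP} instead of verifying the mixed identity at $s=\frac32$) is exactly the a priori argument the paper records before carrying out its own explicit check.
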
 

 \begin{remark}\label{re:CH}
  For every $A\,\in\,\su(2)$ and every complex $2\times 2$-matrix $X$, we have
  \begin{equation}\label{eq:2x2-Matrizen-Identitaet}
   A\,X\,+\, X\, A\,=\,\trace(A\,X)\,\id+\,\trace(X)\,A
  \end{equation}
 \end{remark}

 \begin{proof}
 Every complex $2\times 2$-matrix $X$ is a complex 
 linear combination $a\, \id\,+\,b\, B$ with $B\in\su(\,2\,)$. 
 Hence, its suffices to verify \eqref{eq:2x2-Matrizen-Identitaet} for $X\,:=\,\id$ and $X\,\in\su(\,2\,)$.
 If $X$ is the identity of $\C^2$, then \eqref{eq:2x2-Matrizen-Identitaet} is obvious.
 Furthermore, the Theorem of Cayley--Hamilton reads $A^2\,+\,\det\,A\,\id\,=\,0$
 for $A\,\in\,\su(2)$ due to $\trace\,A\,=\,0$; in particular $\|\,A\,\|^2\,
 :=\,\det\,A\,=\,-\frac12\,\trace(\,A^2\,)$. By polarization, this identity
 becomes
 $$
  A\,B\;+\;B\,A\;-\;\trace(\,AB\,)\,\id
  \;\;=\;\;
  0
 $$
 valid for all $A,\,B\,\in\,\su(2)$. Thus, \eqref{eq:2x2-Matrizen-Identitaet} also holds for all $X\,\in \su(\,2\,)$.
 \end{proof}

 \bigskip
 \noindent
 {\bf Proof of Proposition~\ref{p:AWS_11}}
 Every element of $\su(3)$ is of the form
 $$
  \begin{pmatrix}A&- a \cr a^H &-\trace(A)\end{pmatrix}
 $$
 where $A\in\u(2)$ and $a^H$ denotes the Hermitian transpose of some
 column vector $a\in\C^2$. For each $s\,\neq\,0,\,-1$, there exists an
 $\hat B\,:=\,\hat B_s$--orthogonal direct splitting $\su(3)\oplus\su(2)\,=\,
 \frakh\oplus\frakm$ with
 $$
  \frakh 
  \;\;:=\;\;
  \left\{\left . \begin{pmatrix}A\,& 0 \cr 0 &-\trace(A)
  \end{pmatrix} \oplus  A\, -\,\frac12\, \trace(A)\,\id
  \;\;\right|\;\; A\in \u(2)\right\} 
  \;\;\subset\;\; \su(3)\; \oplus\; \su(2)
 $$ 
 and
 $$
 \frakm
  \;\;:=\;\;
  \left\{\left.\;
  \begin{pmatrix}A&- a \cr a^H & 0\end{pmatrix}
  \;\oplus\;-s\;A
  \;\;\right|\;A\,\oplus\,a\;\in\;\su(2)\,\oplus\,\C^2\;\;\right\}
  \;\;\subset\;\;
  \su(\,3\,)\;\oplus\;\su(\,2\,)
 $$
 in accordance with Lemma \ref{le:Nor_SG}. Then, we have the obvious isomorphisms
 $\frakh\,\cong\,\u(2)$ and $\frakm\,\cong\,\su(2)\,\oplus\,\C^2$. 
 Accordingly, we write elements of  $\frakm$ uniquely as formal direct sums
 $A\,\oplus\,a$ with $A\in \su(2)$ and $a\in \C^2$. Via this identification the scalar product
 on $\frakm$ is given by 
 $$
  \langle\, A\,+\,a,\,B\,\oplus\,b\,\rangle
  \;\;=\;\;
  (\,s\,+\,1\,)\langle\,A,\,B\,\rangle_{\su(2)}\,+ \,
  \langle\, a,\,b\,\rangle_{\C^2}.
 $$ 
 Since $\su(2)$ and $\C^2$ are $\hat B$-orthogonal,
 we see from the polarization formula that the residual bracket
 $[\,\cdot,\,\cdot\,]_\frakm\,:\,\frakm\,\times\,\frakm\, \longrightarrow\, \frakm$
 is a multiple $\frac{1}{c}$ of a vector cross product if and only if
 the alternating 3--tensor $T(\,x,\,y,\,z\,)\,:=\,\langle \,[\,x,\,y\,]_\frakm,\,z\,\rangle$
 satisfies the following three equations:
 \begin{eqnarray}
  \label{eq:VCP1}
  \frac{c^2}{s\,+\,1}\,T_A^2\,(\,B\,\oplus\,b\,)
  &=&
  \Big(\;\langle\,A,\,B\,\rangle\,A\,-\,\|A\|^2\,B\Big )
  \;\oplus\;
  \Big (\;-\,\|A\|^2\,b\;)\Big )
  \\
  \label{eq:VCP2}
  c^2\,T_a^2\,(\,B\,\oplus\,b\,)
  &=&
  \Big(\;-\,\|a\|^2\,B\;\Big)
  \;\oplus\;
  \Big(\;\langle\,a,\,b\,\rangle\,a\,-\,\|a\|^2\;b\;\Big)
  \\
  \label{eq:VCP3}
  c^2\,\big(\, T_A\circ T_a\,+\,T_a\circ T_A\,\big)
  \,(\,B\,\oplus\,b\,)
  &=&
  \Big(\;\langle\,a,\,b\,\rangle\,A\;\Big)
  \;\oplus\;
  \Big(\; (\,s\,+\,1\,)\,\langle\,A,\,B\,\rangle\,a\;\Big)
 \end{eqnarray}
 (for the notation, cf. \eqref{eq:sigmaX}. The commutator 
 of two elements of $\frakm$ is calculated straightforwardly:
 $$
  [\,A\,\oplus\,a\,,\,B\,\oplus\,b\,]
  \;\;=\;\;
  \begin{pmatrix}
   [\,A,\,B\,]\,+\,ba^H\,-\,ab^H & -(Ab-Ba) \cr
   (Ab-Ba)^H                       & -\,2\,i\,\Im(a^Hb)
  \end{pmatrix}
  \;\oplus\;
  s^2[\,A,\,B\,]
  \;\;\in\;\;
  \su(3)\oplus\su(2)
 $$
 In order to calculate the projection to $\frakm$ we consider
 the following map $\varphi\,\colon\,\su(3)\,\oplus\,\su(2)\, 
 \longrightarrow\, \u(2)$ defined by
 $$
  \varphi \left (\;A\;\oplus\;a\;\oplus\;B \;\right ) :=  s\,A\;+\;B
 $$
 with $A\,\oplus\,a\,\oplus\,B\,\in\,\u(2)\,\oplus\,\C^2\,\oplus\,\su(2)\,=\,\su(\,3\,)\,\oplus\,\su(2)$.
 By construction, $\varphi|_{\frakm}\,=\,0$ and $\varphi|_{\frakh}$ defines
 an isomorphism of $\frakh$ with $\u(2)$. We have
 $$
  \varphi(\,[\,A\,\oplus\,a,\,B\,\oplus\,b\,]\,)
  \;\;=\;\;
  s(s\,+\,1) [A,\,B] - s(a\,b^H\,-\,b\,a^H).
 $$
 On the other hand, every element $X\in\frakh\,\cong\,\u(2)$ has a
 decomposition $X\,=\,U\,+\,\alpha\,i\, \id$ with $U\,\in\,\su(2)$ and
 $\alpha\,=\,\frac{1}{2\,i}\,\trace(X)\in \R$. Hence,
 $$
  \varphi(X\;\oplus\;0\;\oplus\;U)
  \;\;=\;\;
  s\,X\,+\,U
  \;\;=\;\;
  (s\,+\,1) U\,+\,\frac s2\,\trace(X)\,\id
 $$
 Since $\trace(a\,b^H\,-\,b\,a^H)\,=\,-\,(a^H\,b\,-\,b^H\,a)
 \,=\,-\,2\,i\, \Im(\,a^H\,b\, )$, the trace--free part of
 $a\,b^H\,-\,b\,a^H\in\u(\,2\,)$ is given by
 $$
  a\,*\,b
  \;\;:=\;\;
  a\,b^H\,-\,b\,a^H\,+\,i\,\Im(\,a^H\,b\,)\,\id
  \;\;\in\;\;
  \su(2)
 $$
 Thus, $\varphi(\,[\,A\,\oplus\,a,\,B\,\oplus\,b\,]\,)\, \,\stackrel!=\,
 \varphi(X\,\oplus\,0\,\oplus\,U)$ if and only if
 \begin{eqnarray*}
  s\,(s\,+\,1) [A,\,B] -s\, a\,*\,b\,
  &\stackrel!=&
  \, (s\,+\,1)U
  \\
  2\,i\,\Im(\,a^H\,b\, )\,
  &\stackrel!=&
  \,\trace(X)
 \end{eqnarray*}
 We conclude that 
 \begin{eqnarray*}
  U
  &=&
  s \,[A,\,B]\,-\,\frac{s}{s\,+\,1}a\,*\,b
  \\
  \alpha
  &=&
  \Im(\,a^H\,b\, )
 \end{eqnarray*}
 Therefore, the residual bracket $[\,\cdot,\,\cdot\,]_\frakm\,:\,\frakm
 \,\times\,\frakm\, \longrightarrow \,\frakm$ is given by 
 $$
  [A\;\oplus\;a\;,\;B\;\oplus\;b\;]_\frakm
  \;\;=\;\;
  \Big(\;(1-s)\,[\,A,\,B\,]\,-\,\frac{1}{s\,+\,1}a\,*\,b\;\Big)
  \;\oplus\;\Big(\;Ab\;-\;Ba \;\Big )
 $$
 Due to the identities $\ad(\,A \,)^2\,B\,=\,4\,(\langle\,A,\,B\,\rangle
 \,-\,\|A\|^2\,)$ and $A^2b\,=\,-\|A\|^2\,b$ for $A\,\oplus\,B\,\oplus\,b
 \,\in\,\su(2)\,\oplus\,\su(2)\,\oplus\,\C^2$ we obtain that the square
 of the residual bracket with $A\,\in\, \frakm$ equals the endomorphism
 $$
  B\;\oplus\;b\;\longmapsto\;
  \Big(\;4\,(1-s)^2(\,\langle A,\,B\;\rangle\,-\,\|A\|^2\,)B\;\Big)
  \;\oplus\;\Big(\;-\,\|\,A\,\|^2\,b\;\Big)
 $$
 Thus, \eqref{eq:VCP1} holds if only if $s=\frac32$ or $s = \frac12$.
 Then, we have $c^2\,=\,\frac52$ and $c^2\,=\,\frac{3}{2}$, respectively.

 \bigskip
 \noindent
 For these values of $s$ and $c^2$ we can also verify~\eqref{eq:VCP2}
 as follows. We calculate that  the square of the residual bracket
 with $(\,0,\, a\,)\in\,\frakm$ equals the endomorphism
 \begin{eqnarray*}
  B\;\oplus\;b
  &\longmapsto&
  -\,\frac1{1+s}\;
  \Big(\;B\,a\,a^H\,+\,a\,a^HB\,-\,i\,\Im\,\trace(B\,a\,a^H)\,\id\,)\;\Big)
  \\[-4pt]
  &&
  \qquad\qquad\oplus\;\Big(\;-\,\frac{1}{1+s}\,(\,b\,a^H\,-\,a\,b^H
  \,-\,i\,\Im(a^H\,b)\,\id\,)\,a\;\Big)
 \end{eqnarray*}
 Using the identity derived above for the Hermitian matrix
 $X\,=\,aa^H$ with $\trace\,X\,=\,a^Ha\,=:\,\|\,a\,\|^2$,
 we see that $[a,\,[\,a,\,\cdot]]_\frakm$
 is the following expression:
 $$
  B\,\oplus\,b\;\longmapsto\;\Big(\;-\,\frac{1}{1+s}\,\|\,a\,\|^2\,B\;\Big)
  \;\oplus\;\Big(\;-\,\frac{1}{1+s}\,\|\,a\,\|^2\,b
  \,+\,\frac{1}{1+s}\,\langle\,  a,\,b\,\rangle\,)\,a\;\Big)
 $$
 This shows that also~\eqref{eq:VCP2} holds for both values of $s\,=\,\frac32$
 and $s\,=\,\frac12$ with the same constants $c^2$ as before. For~\eqref{eq:VCP3} we calculate
 \begin{eqnarray*}
  T_a\circ T_A
  \;\;=\;\;
  B\,\oplus\,b\;
  &\longmapsto&
  -\,\frac1{s\,+\,1}\;a\,*\,A\,b\;\oplus\;+\;(\,s\,-\,1\,)\;[\,A,\,B\,]a
  \\
  T_A\circ T_a
  \;\;=\;\;
  B\,\oplus\,b
  &\longmapsto&
  \frac{s\,-\,1}{s\,+\,1}\;[\,A\,,a\,*\,b\,]\;\oplus\;-\;A\,B\,a
 \end{eqnarray*}
 In particular:
 \begin{eqnarray*}
  \big(\;T_a\circ T_A + T_A\circ T_a\;\big)\;b
  &=&
  \Big(\;-\,\frac{1}{s\,+\,1}\;a\,*\,A\,b
  \;+\;\frac{s\,-\,1}{s\,+\,1}[\,A,\,a\,*\,b\,]\;\Big)\;\oplus\;0
  \\
  \big(\;T_a\circ T_A +T_A\circ T_a\;\big)\;B
  &=&
  0\;\oplus\;\;\Big(\,s\,-\,1\,)\;[\,A,\,B\,]a\,-\,A\,B\,a\;\Big)
 \end{eqnarray*}
 Setting $s\,:= \,\frac12$ in the second line, we obtain
 $$
 \big (\;T_a\circ T_A +T_A\circ T_a\;\big )\;B\;= \; 0\;\oplus\; \;\Big
 ( \;-\,\frac12\,[\,A,\,B\,]\,a\,-\;A\,B\,a\;\Big ) .
 $$
 If $\{\,A,\,B\,\}$ is an orthonormal system of $\su(2)$, then 
 $$
  -\,\frac12\,[\,A,\,B\,]\,a\,-\;A\,B\,a
  \;\;=\;\;
  -2\,A\,B\,a\;\;\neq\;\;0
 $$
 We conclude that the identity $(\,T_a\circ T_A\, +\,T_A\circ T_a\,)\,B
 \,=\,\langle\,A,\,B\,\rangle\,a$ fails.
 Let us verify that~\eqref{eq:VCP3} holds for $s\,:= \,\frac32$ and
 $c^2\,:=\,\frac{5}{2}$. Thus, we have to show that
 \begin{equation}\label{eq:Unsere_letzte_Gleichung}
  \big(\,T_a\circ T_A\,+\,T_A\circ T_a\,\big)\,
  (\,B\;\oplus\;b\,)
  \;\;=\;\;
  \frac25
  \,\langle\,a,\,b\,\rangle\,A\;\oplus\;\langle\,A,\,B\,\rangle\,a
 \end{equation}
 For this: With our values for $s$ and $c^2$ we have
 \begin{eqnarray*}
  \big(\;T_a\circ T_A\,+\,T_A\circ T_a\;\big)\;b
  &=&
  \Big(\;-\,\frac25\;a\,*\,A\,b\;+\;\frac15
   [\,A,\,a\,*\,b\,]\;\Big) \;\oplus\;0
  \\
  \big(\;T_a\circ T_A\,+\,T_A\circ T_a\;\big)\;B
  &=&
  0\;\oplus\;\Big(\;-\,\frac12\;\big(\;A\,B\,+\,\,B\,A\;\big)\,a\;\Big)
  \\
  &=&
  0\;\oplus\;-\frac12\,\trace\big (A\,B\;\big)\,a
 \end{eqnarray*}
 In order to simplify the first equation, we note that $*$ is an $\SU(\,2\,)$-equivariant map in the
 sense that $(\,A\,a)\,*\,(A\,b)\,=\,a\,*\,b$ for every $A\,\in\,\SU(\,2\,)$.
 Hence, the Leibniz rule $[\,A,\,a\,*\,b\,]\,=\,(\,A\,a\,)\,*\,b\,+\,
 a\,*\,A\,b$ holds for all $A\,\in\,\su(2)$. We conclude that
 \begin{eqnarray*}
  \big(\;T_a\circ T_A\,+\,T_A\circ T_a\;\big)\;b
  &=&
  \Big(\,\frac{1}{5}\;(\;(\,A\,a\,)\,*\,b
  \,-\,\;a\,*\,(\,A\,b\,)\;\Big )\;\oplus\;0
  \\
  &=&
  \frac25\;\langle\,a\,,b\,\rangle\, A\;\oplus\;0
 \end{eqnarray*}
 where the last equality follows from:
 \begin{eqnarray*}
  \lefteqn{(\,A\,a\,)*\,b\,-\,a\,*(\,A\,b\,)}
  &&
  \\
  &=&
  (\,(Aa)\,b^H\,-\,b\,(Aa)^H)\,+\,
  i\,\Im(\,(Aa)^H\,b\,)\,\id\,-\,\left((a\,(Ab)^H\,-\,(Ab)\,a^H)
  \,+\,i\,\Im(\,(a^HAb\,)\,\id\right)
  \\
  &=&
  A\,(a\,b^H\,+\,b\,a^H)\,+\,(b\,a^H\,+\,a\,b^H)\,A
  \,-\,\trace\,(\,a^H\,A\,b\, + \,b^H\,A\,a \,)\,\id\,
  \\
  &=&
  2\,\Re(a^H\,b)\,A
  \;\;=\;\;
  2\,\langle\,a,\,b\,\rangle\, A
 \end{eqnarray*}
 due to Remark~\ref{re:CH} with $X\,:=\,a\,b^H\,+\,b\,a^H$.
 We obtain~\eqref{eq:Unsere_letzte_Gleichung}.\qed
\bibliographystyle{amsplain}

\begin{thebibliography}{999999}
%
\bibitem[AFF]{AFF}
 \textsc{I.~Agricola, A.~C.~Ferrera \& Th.~Friedrich:\ }
 \textit{The classification of naturally
  reductive homogeneous spaces in dimensions $n\,\leq\,6$,\ }
 \textrm{Diff.~Geom.~Appl.~\textbf{39}, 59---92 (2015).}
%
\bibitem[AF]{AF}
 \textsc{I.~Agricola \& Th.~Friedrich:\ }
 \textit{$3$--Sasakian manifolds in dimension seven,
  their spinors and  $\G_2$--structures,\ }
 \textrm{J.~Geom.~Phys.~\textbf{60} no.~2, 326---332 (2010).}
%
\bibitem[AlS]{AlS}
 \textsc{B.~Alexandrov \& U.~Semmelmann:\ }
 \textit{Deformations of nearly parallel $\G_2$--structures,\ }
 \textrm{Asian J.~Math.~\textbf{16} no.~4, 713---744 (2012).}
%
\bibitem[AW]{AW}
 \textsc{S.~Aloff \& N. Wallach:\ }
 \textit{An infinite family of distinct $7$-–manifolds admitting positively
  curved Riemannian structures,\ }
 \textrm{Bull.~Amer.~Math.~Soc.~\textbf{81} no.~1, 93---97 (1975).}
%
\bibitem[AS]{AS}
 \textsc{W.~Ambrose \& I.~M.~Singer:\ }
 \textit{On homogeneous Riemannian manifolds,\ }
 \textrm{Duke~Math.~J.~\textbf{25} no.~4, 647---669 (1958).}
%
\bibitem[Ar]{Ar}
 \textsc{T.~Arias--Marco:\ }
 \textit{Constant Jacobi osculating rank of
  $\U(3)/(\,\U(\,1\,)\times\U(\,1\,)\times\U(\,1\,)\,)$,\ }
 \textrm{Arch.~Math.~(Brno) \textbf{45} no.~4, 241---254 (2009).}
%
\bibitem[AN]{AN}
 \textsc{T.~Arias--Marco, A.~M.~Naveira:\ }
 \textit{Constant Jacobi osculating rank of a \textsc{g.o.} space.
  A method to obtain explicitly the Jacobi operator,\ }
 \textrm{Publ.~Math.~Debrecen \textbf{74} no.~1--2, 135---157 (2009).}
%
\bibitem[AAN]{AAN}
 \textsc{T.~Arias--Marco, A.~Arvanitoyeorgos \& A.~M.~Naveira:\ }
 \textit{Constancy of Jacobi osculating rank of
  \textsc{g.o.}--spaces of compact and non--compact type,\ }
 \textrm{A.~M.~RACSAM \textbf{105}, 207---221 (2011).}
%
\bibitem[BFGK]{BFGK}
 \textsc{H.~Baum, Th.~Friedrich, R.~Grunewald \& I.~Kath:\ }
 \textit{Twistors and Killing spinors on Riemannian manifolds,\ }
 \textrm{Teubner--Verlag, Stuttgart Leipzig (1991).}
%
\bibitem[Bal]{Bal}
 \textsc{W.~Ballmann:\ }
 \textit{Homogeneous Structures,\ }
 \textrm{personal lecture notes available for download under
  \texttt{http://people.mpim-bonn.mpg.de/hwbllmnn/archiv/affine00.pdf} (2000).}
%
\bibitem[BTV]{BTV}
 \textsc{J.~Berndt, F.~Tricerri \& L.~Vanhecke:\ }
 \textit{Generalized Heisenberg Groups and Damek-Ricci Harmonic Spaces,\ }
 \textrm{Lecture Notes in Mathematics ~\textbf{1598},
  Springer--Verlag Berlin Heidelberg (1995).}
%
\bibitem[BMS]{BMS}
 \textsc{M.~Barberis, A.~Moroianu \& U.~Semmelmann:\ }
 \textit{Generalized vector cross products and Killing forms
  on negatively curved manifolds,\ }
 \textrm{Geom.~Dedicata~\textbf{205}, 113---127 (2020).}
%
\bibitem[BM]{BM}
 \textsc{F.~Belgun \& A.~Moroianu:\ }
 \textit{Nearly K\"ahler $6$--Manifolds with Reduced Holonomy,\ } 
 \textrm{Ann.~Global Anal.~Geom.~\textbf{19}, 307---319 (2001).}
%
\bibitem[Ber]{Ber}
 \textsc{M.~Berger:\ }
 \textit{Les vari\'et\'es Riemanniennes homog\`enes normales simplement
            connexes \`a courbure strictement positive,\ }
 \textrm{Ann.~Scuola Norm. \textbf{15} no.~3, 179---246  (1961).}
%
\bibitem[Bes]{Bes}
 \textsc{A.~L.~Besse:\ }
 \textit{Einstein Manifolds,\ }
 \textrm{Ergebnisse der Mathematik und ihrer Grenzgebiete \textbf{10}, 
              Springer--Verlag, Berlin Heidelberg (1987).}
%
\bibitem[BG1]{BG1}
 \textsc{C.~P.~Boyer \& K.~Galicki:\ }
 \textit{3--Sasakian Manifolds,\ }
 \textrm{Surveys Diff.~Geom.~\textbf{7}, 123---184 (1999).}
%
\bibitem[BG2]{BG2}
 \textsc{C.~P.~Boyer \& K.~Galicki:\ }
 \textit{On Sasakian-Einstein geometry,\ }
 \textrm{Internat.~J.~of Math.~\textbf{11}, 873---909 (2000).}
%
\bibitem[Bry]{Bry}
 \textsc{R.~Bryant:\ }
 \textit{Metrics with exceptional holonomy,\ }
 \textrm{Ann.~Math.~\textbf{126}, 525---576 (1987).}
%
\bibitem[But1]{But1}
 \textsc{J.-B.~Butruille:\ }
 \textit{Classification des vari\'et\'es approximativement
  k\"ahleriennes homog\`enes,\ }
 \textrm{Ann.~Global Anal.~Geom.~\textbf{27} no.~3, 201---225 (2005).}
%
\bibitem[But2]{But2}
 \textsc{J.-B.~Butruille:\ }
 \textit{Homogeneous nearly K\"ahler manifolds,\ }
 \textrm{arXiv:math/0612655 (2006).}
%
\bibitem[CMS]{CMS}
 \textsc{F. M. Cabrera, M. D. Monar \& A. F. Swann:\ }
 \textit{Classification of $\G_2$--structures,\ }
 \textrm{J.~London~Math.~Soc.~\textbf{53}  no.~2, 407---416 (1996).}
%
\bibitem[DNP]{DNP}
 \textsc{M. J. Duff, B.~E.~W. Nilson \& C. N. Pope:\ }
 \textit{Kaluza--Klein Supergravity,\ }
 \textrm{Physics Reports~\textbf{130}, 1---142 (1986).}
%
\bibitem[FH]{FH}
 \textsc{L.~Foscolo \& M.~Haskins:\ }
 \textit{New $\G_2$--holonomy cones and exotic nearly K\"ahler
  structures on $\S^6$ and $\S^3\,\times\,\S^3$,\ }
 \textrm{Ann.~Math.~\textbf{185} no.~1, 59---130 (2017).}
%
\bibitem[FI]{FI}
 \textsc{Th.~Friedrich \& S.~Ivanov:\ }
 \textit{Parallel spinors and connections with skew--symmetric
  torsion in string theory,\ }
 \textrm{Asian J.~Math.~\textbf{6} no.~2, 303---336 (2002).}
%
\bibitem[FK]{FK}
 \textsc{Th.~Friedrich \& I.~Kath:\ }
 \textit{$7$--Dimensional compact Riemannian manifolds with Killing spinors,\ }
 \textrm{Commun.~Math.~Phys.~\textbf{133} no.~3, 543---561 (1990).}
%
\bibitem[FKMS]{FKMS}
 \textsc{Th.~Friedrich, I.~Kath, A.~Moroianu \& U.~Semmelmann:\ }
 \textit{On nearly parallel $\G_2$--structures,\ }
 \textrm{J.~Geom.~Phys.~\textbf{23} no.~3--4, 259---286 (1997).}
%
\bibitem[Fried]{Fried}
 \textsc{Th.~Friedrich:\ }
 \textit{Nearly K\"ahler and nearly parallel $\G_2$--structures on spheres,\ }
 \textrm{Arch.~Math.~(Brno) \textbf{42}, Supplement, 241---243 (2006).}
%
\bibitem[FulH]{FulH}
 \textsc{W.~Fulton \& J.~Harris:\ }
 \textit{Representation Theory: A First Course,\ }
 \textrm{Graduate Texts in Mathematics, Readings in Mathematics
   \textbf{129}, Springer--Verlag, New York (1991).}
%
\bibitem[Gon]{Gon}
 \textsc{J.~C.~Gonz\'alez--D\'avila:\ }
 \textit{Jacobi osculating rank and isotropic geodesics on
  naturally reductive $3$--manifolds,\ }
 \textrm{Diff.~Geom.~Appl.~\textbf{27} no.~4, 482---495 (2009).}
%
\bibitem[GN]{GN}
 \textsc{J.~C.~Gonz\'alez--D\'avila \& A.~M.~Naveira:\ }
 \textit{Jacobi fields and osculating rank of the Jacobi operator in some special
            classes of homogeneous Riemannian spaces,\ }
 \textrm{Note Mat.~\textbf{28} Suppl.~1, 191---208  (2008).}
%
\bibitem[Gor]{Gor}
 \textsc{C.~Gordon:\ }
 \textit{Naturally reductive homogeneous Riemannian manifolds,\ }
 \textrm{Canad.~J.~Math.~\textbf{37} no.~3, 467---487 (1985).}
%
\bibitem[Gray1]{G1}
 \textsc{A.~Gray:\ }
 \textit{Vector cross products on manifolds,\ }
 \textrm{Trans.~Amer.~Math.~Soc.~\textbf{141}, 465---504 (1969).}
%
\bibitem[Gray2]{G2}
 \textsc{A.~Gray:\ } 
 \textit{Nearly K\"ahler manifolds,\ }
 \textrm{J.~Diff.~Geom.~\textbf{4}, 283---309  (1970).}
%
\bibitem[Gray3]{G3}
 \textsc{A.~Gray:\ }
 \textit{Riemannian manifolds with geodesic symmetries of order $3$,\ } 
 \textrm{J.~Diff.~Geom.~\textbf{7} no.~3--4, 343---369  (1972).}
%
\bibitem[Gray4]{G4}
 \textsc{A.~Gray:\ }
 \textit{The structure of nearly K\"ahler manifolds,\ }
 \textrm{Math.~Ann.~\textbf{223}, 233---248 (1976).}
%
\bibitem[HMS]{HMS}
 \textsc{K.~Heil, A.~Moroianu \& U.~Semmelmann:\ }
 \textit{Killing and conformal Killing tensors,\ }
 \textrm{J.~Geom.~Phys.~\textbf{106}, 383---400 (2016).}
%
\bibitem[Jen]{Jen}
 \textsc{G.~R.~Jensen:\ }
 \textit{Imbeddings of Stiefel manifolds into Grassmannians,\ }
 \textrm{Duke Math. J.~\textbf{42} no.~3, 397---407 (1975).}
%
\bibitem[JW1]{JW1}
 \textsc{T.~Jentsch \& G.~Weingart:\ }
 \textit{Riemannian and K\"ahlerian Normal Coordinates,\ }
 \textrm{Asian~J.~Math.~\textbf{24}  no.~3, 369---416 (2020).}
%
\bibitem[JW2]{JW2}
 \textsc{T.~Jentsch \& G.~Weingart:\ }
 \textit{Jacobi Relations for Generalized Heisenberg Groups,\ }
 \textrm{in preparation}
%
\bibitem[Kap]{Kap}
 \textsc{A.~Kaplan:\ }
 \textit{On the Geometry of Groups of Heisenberg Type,\ }
 \textrm{B.~Lond.~Math.~Soc. \textbf{15} no.~1, 35---42 (1983).}
%
\bibitem[LM]{LM}
 \textsc{H.~B.~Lawson \& M.~L.~Michelsohn:\ }
 \textit{Spin Geometry,\ }
 \textrm{Princeton University Press (1989).}
%
\bibitem[MNT]{MNT}
 \textsc{E.~Mac\'ias-Virg\'os, A.~M.~Naveira \& A.~Tarr\'io:\ }
 \textit{The constant osculating rank of the Wilking manifold $\VM_3$,\ }
 \textrm{C.~R.~Math.~Acad.~Sci.~Paris \textbf{346} no.~1--2, 67---70 (2008).}
%
\bibitem[MNS]{MNS}
 \textsc{A.~Moroianu, P.~A.~Nagy \& U.~Semmelmann:\ }
 \textit{Deformations of Nearly K\"ahler Structures,\ }
 \textrm{Pacific J.~Math.~\textbf{235} no.~1, 57---72 (2008).}
%
\bibitem[MS]{MS}
 \textsc{A.~Moroianu \& U.~Semmelmann:\ }
 \textit{The Hermitian Laplace Operator on Nearly K\"ahler Manifolds,\ }
 \textrm{Commun.~Math.~Phys.~\textbf{294}, 251---272 (2010).}
%
\bibitem[Nag]{Nag}
 \textsc{P.~A.~Nagy:\ }
 \textit{On Nearly--K\"ahler Geometry,\ }
 \textrm{Ann.~Global~Anal.~Geom.~\textbf{22}, 167---178 (2002).}
%
\bibitem[NT]{NT}
 \textsc{A.~M.~Naveira \& A.~Tarr\'io:\ }
 \textit{A method for the resolution of the Jacobi equation
  $\mathbf{Y''\,+\,RY\,=\,0}$ on the manifold
  $\mathbf{\Sp(\,2\,)/\SU(\,2\,)}$,\ }
 \textrm{Monatsh.~Math.~\textbf{154} no.~3, 231---246 (2008).}
%
\bibitem[NTr]{NTr}
 \textsc{L.~Nicolodi \& F.~Tricerri:\ }
 \textit{On two theorems of I. M. Singer about homogenous spaces,\ }
 \textrm{Ann.~Glob.~Anal.~Geom.~\textbf{8} no.~2, 193---209 (1990).}
%
\bibitem[Nik]{Nik}
 \textsc{Yu.~G.~ Nikonorov:\ }
 \textit{Compact Homogeneous Einstein 7--Manifolds,\ }
 \textrm{Geom.~Dedicata \textbf{109}, 7---30 (2004).}
%
\bibitem[NO]{NO}
 \textsc{K.~Nomizu \& H.~Ozeki:\ }
 \textit{A theorem on curvature tensor fields,\ }
 \textrm{Proc.~Natl.~Acad.~Sci.~USA \textbf{48} no.~2, 206---207 (1962).}
%
\bibitem[OR1]{OR1}
 \textsc{C.~Olmos \& S.~Reggiani:\ }
 \textit{The skew--torsion holonomy theorem and naturally reductive spaces,\ }
 \textrm{J.~reine angew.~Math.~\textbf{664}, 29---53 (2012).}
%
\bibitem[OR2]{OR2}
 \textsc{C.~Olmos \& S.~Reggiani:\ }
 \textit{A note on the uniqueness of the
  canonical connection of a naturally reductive space,\ }
 \textrm{Monatsh.~Math.~\textbf{172} no. 3--4, 379---386 (2013).}
%
\bibitem[Pil]{Pil}
 \textsc{M.~Pilca:\ }
 \textit{A new proof of Branson’s classification
  of elliptic generalized gradients,\ }
 \textrm{manuscripta math.~\textbf{136} no.~1--2, 65---81 (2011).}
%
\bibitem[PP]{PP}
 \textsc{Don N.~ Page \& C.~N.~Pope:\ }
 \textit{Stability analysis of compactifications of $D\,=\,11$ supergravity
         with $\SU(\,3\,)\,\times\,\SU(\,2\,)\times\,\U(\,1\,)$ symmetry,\ }
 \textrm{Phys.~Lett.~\textbf{145B} no. 5--6, 337---341 (1984).}
 %
\bibitem[Sem]{Sem}
 \textsc{U.~Semmelmann:\ }
 \textit{Conformal Killing forms on Riemannian manifolds,\ }
 \textrm{Math.~Z.~\textbf{245} no. 3, 503---527  (2003).}
%
\bibitem[SW]{SW}
 \textsc{U.~Semmelmann \& G.~Weingart:\ }
 \textit{The Weitzenb\"ock Machine,\ }
 \textrm{Compos. Math. \textbf{146} no.~2, 507---540 (2010).}
%
\bibitem[Sin]{Sin}
 \textsc{I.~M.~Singer:\ }
 \textit{Infinitesimally homogeneous spaces,\ }
 \textrm{Comm.~Pure.~Appl.~Math.~\textbf{13}, 685---697 (1960).}
%
\bibitem[St1]{St1}
 \textsc{R.~Storm:\ }
 \textit{A new construction of naturally reductive spaces,\ }
 \textrm{Transformation Groups \textbf{23} no.~2, 527---553 (2018).}
%
\bibitem[St2]{St2}
 \textsc{R.~Storm:\ }
 \textit{Structure theory of naturally reductive spaces,\ }
 \textrm{Diff. Geom. Appl. \textbf{64},
  174---200 (2019).}
%
\bibitem[St3]{St3}
 \textsc{R.~Storm:\ }
 \textit{The classification of 7-- and 8--dimensional naturally reductive spaces,\ }
 \textrm{Can.~J.~Math.~ \textbf{72} no.~5, 1246---1274 (2020).}
%
\bibitem[Tan]{Tan}
 \textsc{S.~Tanno:\ }
 \textit{Constancy of holomorphic sectional curvature in almost Hermitian manifolds,\ }
 \textrm{Kodai.~Math.~Sem.~Rep.~\textbf{25} no. 2, 190---201 (1973).}
%
\bibitem[TV]{TV}
 \textsc{F.~Tricerri \& L.~Vanhecke:\ }
 \textit{Homogeneous Structures on Riemannian Manifolds,\ }
 \textrm{Cambridge University Press (1983).}
%
\bibitem[VZ]{VZ}
 \textsc{L.~Verdiani \& W.~Ziller:\ }
 \textit{Positively curved homogeneous metrics on spheres,\ } 
 \textrm{Math.~Z.~\textbf{261}, 473---488 (2009).}
%
\bibitem[Wil]{Wil}
 \textsc{B.~Wilking:\ }
 \textit{The normal homogeneous space $\SO(\,3\,)\times
  \SU(\,3\,)/\U^\bullet(2)$ has positive sectional curvature,\ }
 \textrm{Proc.~Amer.~Math.~Soc.~\textbf{127} no.~4, 1191---1194 (1999).}
%
\bibitem[Witt]{Witt}
 \textsc{E.~Witten:\ }
 \textit{Search for a realistic Kaluza-Klein theory,\ } 
 \textrm{Nucl.~Phys.~\textbf{B186}, 412---428 (1981).}
 %
\bibitem[WZ]{WZ}
 \textsc{B.~Wilking \& W.~Ziller:\ }
 \textit{Revisiting homogeneous spaces with positive curvature,\ } 
 \textrm{J.~reine angew.~Math.~\textbf{738}, 313---328 (2018).}
%
\bibitem[Zil1]{Zil1}
 \textsc{W.~Ziller:\ }
 \textit{The Jacobi equation on naturally reductive compact
  Riemannian homogeneous spaces,\ } 
 \textrm{Comm.~Math.~Helv.~\textbf{52} no.~1, 573---590 (1977).}
%
\bibitem[Zil2]{Zil2}
 \textsc{W.~Ziller:\ }
 \textit{Homogeneous Einstein metrics on spheres and projective spaces,\ } 
 \textrm{Math.~Ann.~\textbf{259}, 351---358 (1982).}
%
\end{thebibliography}
\end{document}